\def\Q{{\mathbb Q}}
\def\fchar{\mathrm{char}}
\newtheorem{thm}{Theorem}[section]
\newtheorem{lem}[thm]{Lemma}
\newtheorem{cor}[thm]{Corollary}
\newtheorem{prop}[thm]{Proposition}
\theoremstyle{definition}
\newtheorem{ex}[thm]{Example}
           \newtheorem{rem}[thm]{Remark}
\title{The divisibility by $2$ of rational points on elliptic curves}
\author {Boris M. Bekker}
\address{St. Petersburg State University, Department of Mathematics and Mechanics,   Universitetsky prospekt, 28, Peterhof, St. Petersburg, 198504, Russia8}
\email{ bekker.boris@gmail.com}
\thanks{The first named author (B.B.) is partially supported by RFFI grant N 14-01-00393}
\author {Yuri G. Zarhin}
\address{Pennsylvania State University, Department of Mathematics, University Park, PA 16802, USA}
\email{zarhin@math.psu.edu}
\thanks{The second named author (Y.Z.) is  partially supported by a grant from the Simons Foundation (\#246625 to Yuri Zarkhin).
Part of this work was done  in May-June 2016 when he was a visitor at the Max-Planck-Institut f\"ur Mathematik (Bonn, Germany),
whose hospitality and support are gratefully acknowledged.}
\begin{document}\date{}

\begin{abstract}
We give a simple proof of the well-known divisibility by 2  condition for rational points on elliptic curves with
rational 2-torsion. As an application of the explicit division by $2^n$ formulas obtained in Sec.2, we construct versal families of elliptic curves containing points of orders 4, 5,  6, and 8 from which we obtain an explicit description of elliptic curves over certain finite fields $\mathbb F_q$ with a prescribed (small) group $E(\mathbb F_q)$. In the last two sections we study 3- and 5-torsion.
\end{abstract}
\maketitle

\section{Introduction}
Let $E$ be an elliptic curve over a number field $K$. A famous {\sl Mordell-Weil theorem} asserts that the (abelian)  group $E(K)$ of $K$-points on $E$  is finitely generated \cite{Cassels,SilvermanTate,Wash}. The first step in its proof (and actual finding a finite set that generates $E(K)$)
 is a {\sl weak Mordell-Weil theorem} that asserts that the quotient $E(K)/2 E(K)$ is a finite (abelian) group. This step is called 2-descent  and its basic ingredient is a criterion for when a $K$-point  on $E$ is twice another $K$-point (under an additional assumption that all points of order 2 on $E$ are defined over $K$).
In this paper we give a new treatment of this criterion that seems to be less computational than previous ones (\cite[Ch. 5, pp. 102--104]{Lang},  \cite{Husemoller}, \cite[Th. 4.2 on pp. 85-87]{Knapp}, \cite[Lemma 7.6 on p. 67]{Buhler}
\cite[pp. 331--332]{Bombieri}).  This approach allows us to describe explicitly 2-power torsion on  elliptic curves.  In addition we obtain explicitly families of elliptic curves with various torsion subgroups
 over arbitrary fields of characteristic different from 2 (the problem of constructing elliptic curves with given torsion goes back to B.Levi \cite {SS}).

The paper is organized as follows. We work with elliptic curves $E$ over an arbitrary field $K$ with $\fchar(K)\ne 2$.
In Section \ref{l1} we discuss the criterion of divisibility by 2 and explicit formulas for the ``half-points'' in $E(K)$.
Next   we discuss a criterion of divisibility by any power of $2$ in $E(K)$ (Section \ref{power2}). In Section \ref{torsion} we collect useful results about elliptic curves and their torsion.
In Sections \ref{l4},\ref{l8}, and \ref{l6}  we will use explicit formulas of Section \ref{l1} in order to construct {\sl versal}  families of elliptic curves $E$ such that $E(K)$ contains a subgroup isomorphic to $\mathbb{Z}/2m\mathbb{Z}\oplus \mathbb{Z}/2\mathbb{Z}$ with $m=2,4,3$, respectively. (In addition, in Section  \ref{l4} we construct a {\sl versal}  family of elliptic curves $E$ such that $E(K)$ contains a subgroup isomorphic to $\mathbb{Z}/4\mathbb{Z}\oplus \mathbb{Z}/4\mathbb{Z}$.)
 Such families are parameterized by $K$-points of rational curves that are closely related to certain modular curves of genus zero (see \cite{SS,Kubert,Silver,Silver2}); however, our approach remains quite elementary. In addition, in Sections \ref{l8} and \ref{l5} we construct {\sl versal}  families of elliptic curves $E$ such that $E(K)$ contains a subgroup isomorphic to $\mathbb{Z}/8\mathbb{Z}\oplus \mathbb{Z}/4\mathbb{Z}$ and $\mathbb{Z}/10\mathbb{Z}\oplus \mathbb{Z}/2\mathbb{Z}$, respectively.  These two families are parameterized by $K$-points  of   curves  that are closely related to certain modular curves of genus 1.

  As an unexpected application, we describe explicitly (and without computations) elliptic curves $E$ over  small finite fields $\mathbb{F}_q$  such that $E(\mathbb{F}_q)$ is isomorphic to  a certain finite group (of small order).  Using deep highly nontrivial results of B.  Mazur \cite{Mazur} and of S. Kamienny and M. Kenku--F. Momose \cite{Kam,Ken}, we describe explicitly elliptic curves $E$ over the field  $\Q$ of rational numbers and over quadratic fields $K$ such that the torsion subgroup $E(\Q)_t$ of $E(\Q)$ (resp.  $E(K)_t$ of $E(K)$) is isomorphic to  a certain finite group.

{\bf Acknowledgements}.  We are grateful to Robin Chapman for helpful comments.
Our special thanks go to Tatiana Bandman for  help with \textbf{magma}.

\section{Division by 2}
\label{l1}

Let $K$ be a field of characteristic different from $2$.
Let
\begin{equation}
\label{E2}
E: y^2=(x-\alpha_1)(x-\alpha_2)(x-\alpha_3)
\end{equation}
be an elliptic curve over $K$, where $\alpha_1,\alpha_2,\alpha_3$ are {\sl distinct} elements of $K$.
This means that $E(K)$ contains all three points of order 2, namely, the points
\begin{equation}
\label{W2}
W_1=(\alpha_1,0), W_2=(\alpha_2,0), W_3=(\alpha_3,0).
\end{equation}
The following statement is pretty well known (\cite[pp. 269--270]{Cassels},
\cite[Ch. 5, pp. 102--104]{Lang},  \cite{Husemoller}, \cite[Th. 4.2 on pp. 85-87]{Knapp}, \cite[Lemma 7.6 on p. 67]{Buhler}
\cite[pp. 331--332]{Bombieri}, \cite[pp. 212--214]{Wash}; see also \cite{Yelton}).
\begin{thm}
\label{th0}
Let $P=(x_0,y_0)$ be a $K$-point on $E$. Then $P$ is divisible by $2$ in $E(K)$ if and only if all three elements $x_0-\alpha_i$ are squares in $K$.
\end{thm}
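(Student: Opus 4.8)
The plan is to reduce both implications to a single polynomial identity. Write $f(x)=(x-\alpha_1)(x-\alpha_2)(x-\alpha_3)$, so $E\colon y^2=f(x)$. First dispose of the trivial cases: the point at infinity equals $2$ times itself, and if $P=W_i$ then $x_0-\alpha_i=0$ is a square (the remaining two coordinates are handled by the general argument). So assume $P=(x_0,y_0)$ is affine, and seek to relate divisibility to the existence of a non-vertical line meeting $E$ doubly at some $Q$ and a third time above $x_0$.

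For the ``only if'' direction, suppose $P=2Q$ with $Q=(u,v)\in E(K)$. Since $P\ne O$, the point $Q$ is not of order $2$, so $v\ne 0$ and the tangent to $E$ at $Q$ is a genuine line $y=\lambda x+\mu$ with $\lambda,\mu\in K$. It meets $E$ with a double zero at $Q$ and a third intersection at $-2Q=-P$, whose $x$-coordinate is $x_0$; comparing monic cubics gives
\[
f(x)-(\lambda x+\mu)^2=(x-u)^2(x-x_0).
\]
Evaluating at $x=\alpha_i$ and using $f(\alpha_i)=0$ yields $(\lambda\alpha_i+\mu)^2=(x_0-\alpha_i)(\alpha_i-u)^2$. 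As $u\ne\alpha_i$ (else $Q=W_i$ would have order $2$), we get $x_0-\alpha_i=\bigl((\lambda\alpha_i+\mu)/(\alpha_i-u)\bigr)^2$, a square in $K$, for each $i$.

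For the ``if'' direction, suppose $x_0-\alpha_i=\beta_i^2$ with $\beta_i\in K$. I reverse the computation: I look for $u\in K$ and a linear $\ell(x)=\lambda x+\mu$ with $\ell(\alpha_i)=\beta_i(\alpha_i-u)$ for all $i$, that is, a line through the three points $(\alpha_i,\beta_i(\alpha_i-u))$. Requiring the a priori quadratic Lagrange interpolant to be linear is a single linear equation on $u$, whose $u$-coefficient is, up to sign, $\sum_i \beta_i\big/\!\prod_{j\ne i}(\alpha_i-\alpha_j)$; using $\alpha_i-\alpha_j=\beta_j^2-\beta_i^2$ this sum simplifies to $-\bigl[(\beta_1+\beta_2)(\beta_1+\beta_3)(\beta_2+\beta_3)\bigr]^{-1}$, which is nonzero because $\beta_i+\beta_j=0$ would force $\alpha_i=\alpha_j$. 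Hence $u$, and then $\lambda,\mu$, are determined in $K$. Now both $f(x)-\ell(x)^2$ and $(x-u)^2(x-x_0)$ are monic cubics agreeing at the three distinct points $\alpha_i$ (each side equals $(\alpha_i-u)^2(\alpha_i-x_0)$ there), so they coincide as polynomials. Therefore $y=\ell(x)$ is tangent to $E$ at $Q=(u,\ell(u))$ and meets $E$ again at $(x_0,\ell(x_0))$ with $\ell(x_0)=\pm y_0$; thus $-2Q=\pm P$, and $P\in 2E(K)$ in either case.

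I expect the ``if'' direction to be the main obstacle. The crux is twofold: guaranteeing the interpolation is solvable over $K$, which rests precisely on the nonvanishing of $(\beta_1+\beta_2)(\beta_1+\beta_3)(\beta_2+\beta_3)$, and then promoting the three pointwise equalities at the $\alpha_i$ to a genuine polynomial identity (via the observation that two monic cubics agreeing at three points must coincide), after which the geometric conclusion is immediate. The same nonvanishing observation also covers the degenerate configurations in which some $\beta_i=0$ (that is, $P$ of order $2$) or in which $Q$ might a priori be $2$-torsion.
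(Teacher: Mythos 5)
Your proof is correct in both directions. The ``only if'' half is the standard tangent-line computation that the paper treats as routine, and your ``if'' half, while resting on the same identity $f(x)-\ell(x)^2=(x-u)^2(x-x_0)$ and the same evaluation at the $\alpha_i$ (which yields the relation $\ell(\alpha_i)=\beta_i(\alpha_i-u)$), reaches the conclusion by a genuinely different mechanism. The paper starts from a half $Q$ of $P$ that exists over an overfield and \emph{descends}: it reads that relation as saying the linear fractional transformation $z\mapsto(lz+m)/(-z+x_1)$ carries the three distinct $K$-points $\alpha_i$ to the three distinct $K$-points $r_i=\sqrt{x_0-\alpha_i}$, so by rigidity of M\"obius transformations it is defined over $K$, and the matrix entry $-1$ normalizes it so that $l,m,x_1$ all lie in $K$. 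You instead \emph{construct} the tangent line from scratch over $K$ by Lagrange interpolation; the solvability of your collinearity condition rests on the explicit nonvanishing of $(\beta_1+\beta_2)(\beta_1+\beta_3)(\beta_2+\beta_3)$, which is the same rigidity made quantitative (these are, up to sign, the quantities $r_i+r_j$ that reappear in the paper's explicit formulas for the halves of $P$). Your route costs two extra, easy steps that the paper gets for free --- promoting the three pointwise agreements at the $\alpha_i$ to the full polynomial identity, and resolving the sign ambiguity $2Q=\pm P$ (the paper's $Q$ satisfies $2Q=P$ by construction) --- and it would be worth spelling out that the identity forces $\ell(u)\ne 0$, so that $Q=(u,\ell(u))$ is not a Weierstrass point and the double root at $u$ really encodes tangency; but in exchange it never leaves $K$ and needs no a priori existence of $Q$ over $\overline{K}$. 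Both arguments ultimately produce the same four halves of $P$ and the same explicit formulas.
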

While the proof of the claim that the divisibility implies the squareness is straightforward,
it seems that the known elementary  proofs of the converse statement are
more involved/computational. (Notice that there is another approach, which is based on Galois cohomology \cite[Sect. X.1, pp. 313--315]{Silverman}
and it works for hyperelliptic jacobians as well \cite{Schaefer}.)

We start with an elementary proof of the divisibility that seems to be less computational. (In additional, it will give us immediately explicit formulas for the coordinates of all four $\frac{1}{2}P$.)

\begin{proof}
So, let us assume that all three elements $x_0-\alpha_i$ are squares in $K$, and let $Q=(x_1,y_1)$ be a point on $E$ with $2 Q=P$.  Since $P\ne \infty$, we have $y_1\ne 0$, and therefore
the equation of the {\sl tangent line} $L$ to $E$ at $Q$ may be written in the form
$$L: y=l x+m.$$
(Here $x_1,y_1, l, m$ are elements of an  overfield of $K$.) In particular,
$y_1=l x_1+m$.
By the definition of $Q$ and $L$, the point $-P=(x_0,-y_0)$ is the ``third'' common point of $L$ and $E$;
in particular,
$-y_0=l x_0+m$,
i.e., $y_0=-(l x_0+m)$.
Standard arguments (the restriction of the equation for $E$ to $L$, see \cite[pp. 25--27]{SilvermanTate}, \cite[pp. 12--14]{Wash}, \cite[p. 331]{Bombieri}) tell us that
the monic cubic polynomial
$$(x-\alpha_1)(x-\alpha_2)(x-\alpha_3)-(l x+m)^2$$
coincides with $(x-x_1)^2 (x-x_0)$. This implies that
$$-(l \alpha_i+m)^2=(\alpha_i-x_1)^2 (\alpha_i-x_0) \ \text{for all}\ i=1,2,3.$$
Since $2Q=P\ne \infty$, none of   $x_1 -\alpha_i$ vanishes. Recall that all  $x_0-\alpha_i$ are squares in $K$ and they are obviously distinct. Consequently,  the
 corresponding square roots \cite[p. 331]{Bombieri}
\begin{equation*}
\label{alphaR}
r_i:=\frac{l\alpha_i+m}{x_1 - \alpha_i}=\sqrt{x_0-\alpha_i}
\end{equation*}
are {\sl distinct} elements of $K$. In other words, the transformation
$$z \mapsto \frac{l z+m}{-z+x_1}$$ of the projective line
sends the three distinct $K$-points $\alpha_1,\alpha_2,\alpha_3$ to the three distinct
$K$-points $r_1,r_2,r_3$, respectively.
This implies that our transformation is {\sl not} constant, i.e., is an honest linear fractional transformation
\footnote
{Another way to see this is to assume the contrary.  Then the {\sl determinant} $l x_1+m=0$, i.e., $y_0=0$, and therefore $P=2Q$ is the infinite point, which is not true.}
and is defined over $K$.
Since one of the ``matrix entries",
$-1$, is already a nonzero element of $K$,  all other matrix entries $l, m, x_1$ also lie in $K$. Since $y_1=l x_1 +m$, it also lies in $K$.
So, $Q=(x_1,y_1)$ is a $K$-point of $E$.
\end{proof}

Let us get explicit formulas for $x_1,y_1, l, m$ in terms of $r_1,r_2,r_3$. We have
$$\alpha_i=x_0-r_i^2, \ l\alpha_i+m=r_i (x_1 - \alpha_i),$$
and therefore
$$l (x_0-r_i^2)+m=r_i [x_1-(x_2-r_i^2)]=r_i^3+(x_1-x_2)r_i,$$
which is equivalent to
$r_i^3+ l r_i^2+(x_1-x_0)r_i-(l x_0+m)=0$,
and this equality holds for all $i=1,2,3$. This means that the monic cubic polynomial
\begin{equation*}
\label{polyH}
h(t)=t^3+l t^2+(x_1-x_0)t-(l x_0+m)
\end{equation*}
coincides with $(t-r_1)(t-r_2)(t-r_3)$. Recall that $-(l x_0+m)=y_0$ and get
\begin{equation}
\label{product}
r_1 r_2 r_3=-y_0.
\end{equation}
We also get
\begin{equation*}
\label{slope}
l=-(r_1+r_2+r_3), \ x_1-x_0=r_1 r_2+r_2 r_3+r_3 r_1.
\end{equation*}
This implies that
\begin{equation}
\label{x1}
x_1=x_0+(r_1 r_2+r_2 r_3+r_3 r_1).
\end{equation}
Since $y_1=l x_1+m$ and $-y_0=l x_0+m$, we obtain that
$$m=-y_0-l x_0=-y_0+(r_1+r_2+r_3)x_0,$$
and therefore
$$y_1= -(r_1+r_2+r_3)[x_0+(r_1 r_2+r_2 r_3+r_3 r_1)]+[-y_0+(r_1+r_2+r_3)x_0],$$ i.e.,
\begin{equation}
\label{y1}
y_1=-y_0-(r_1+r_2+r_3)(r_1 r_2+r_2 r_3+r_3 r_1).
\end{equation}
Notice that there are precisely four points $Q \in E(K)$ with $2Q=P$,
\begin{equation}
\label{halfP}
Q=\left(x_0+(r_1 r_2+r_2 r_3+r_3 r_1),-y_0-(r_1+r_2+r_3)(r_1 r_2+r_2 r_3+r_3 r_1)\right),
\end{equation}
each of which corresponds to one of the {\sl four} choices of the three square roots $r_i=\sqrt{x_0-\alpha_i}\in K$ ($i=1,2,3$) with
$r_1 r_2 r_3=-y_0$.
Using the latter equality, we may rewrite \eqref{y1} as
\footnote{This  was brought to our attention  by Robin Chapman.}
\begin{equation}
\label{chap}
y_1=-(r_1+r_2)(r_2+r_3)(r_3+r_1).
\end{equation}
In addition,
\begin{equation}
\label{x1prod}
x_1=\alpha_i+(r_i+r_j)(r_i+r_k),
\end{equation}
where $i,j,k$ is any permutation of $1,2,3$. Indeed,
$$x_1-\alpha_i=(x_0-\alpha_i)+r_1 r_2+r_2 r_3+r_3 r_1=$$
$$r_i^2+r_1 r_2+r_2 r_3+r_3 r_1=(r_i+r_j)(r_i+r_k).$$
The remaining four choices of the ``signs" of $r_1,r_2,r_3$ bring us to  the same values of abscissas and the opposite values of ordinates and give the results of division by 2 of the point $-P$.

Conversely, if we know $Q=(x_1,y_1)$, then we may recover the corresponding $(r_1,r_2,r_3)$. Namely, the equalities (\ref{x1prod}) and (\ref{chap}) imply that
\begin{equation*}
\label{QtoR}
\begin{aligned}
r_j+r_k=-\frac{y_1}{x_1-\alpha_i},\\
r_i=\frac{-(r_j+r_k)+(r_i+r_j)+(r_i+r_k)}{2}\\=-\frac{y_1}{2}\cdot
\left(-\frac{1}{x_1-\alpha_i}+\frac{1}{x_1-\alpha_j}+\frac{1}{x_1-\alpha_k}\right)
\end{aligned}
\end{equation*}
for any permutation $i,j,k$ of $1,2,3$.

\begin{ex}
\label{halfW3}
 Let us choose as $P=(x_0,y_0)$ the point $W_3=(\alpha_3,0)$  of order $2$ on $E$. Then $r_3=0$, and we have two arbitrary independent choices of  (nonzero) $r_1=\sqrt{\alpha_3-\alpha_1}$ and $r_2=\sqrt{\alpha_3-\alpha_2}$.
 Thus
 $$Q=(\alpha_3+r_1 r_2, -(r_1+r_2)r_1 r_2)=(\alpha_3+r_1 r_2, -r_1(\alpha_3-\alpha_2)-r_2(\alpha_3-\alpha_1))$$
 is a point on $E$ with $2Q=P$; in particular, $Q$ is a point of order $4$. The same is true for the (three remaining) points
 $-Q=(\alpha_3+r_1 r_2, r_1(\alpha_3-\alpha_2)+r_2(\alpha_3-\alpha_1))$,
 \newline
 $(\alpha_3-r_1 r_2, - r_1(\alpha_3-\alpha_2)+r_2(\alpha_3-\alpha_1))$, and
 $(\alpha_3-r_1 r_2,  r_1(\alpha_3-\alpha_2)-r_2(\alpha_3-\alpha_1))$.
\end{ex}

Recall that, in formula (\ref{halfP}) for the coordinates of the points $\frac{1}{2}{P}$, one may arbitrarily choose the signs of $r_1,r_2,r_3$ under  condition \eqref{product}.
Let $Q$ be one of $\frac{1}{2}{P}$'s that corresponds to a certain choice of $r_1,r_2, r_3$. The remaining three {\sl halves} of $P$ correspond to
$(r_1,-r_2, -r_3)$, $(-r_1,r_2, -r_3)$, $(-r_1,-r_2, r_3)$. Let us denote these halves by $\mathcal{Q}_1, \mathcal{Q}_2, \mathcal{Q}_3$, respectively. For each $i=1,2,3$, the difference $\mathcal{Q}_i-Q$ is a point of order 2 on $E$. Which one? The following assertion answers this question.

\begin{thm}
\label{sign}
Let $i,j,k$ be a permutation of $1,2,3$. Then

\begin{itemize}
\item[(i)]
If $P=W_i$, then $\mathcal{Q}_i= -Q$.
\item[(ii)]
If $P \ne W_i$, then all three points $\mathcal{Q}_i, -Q, W_i$ are distinct.
\item[(iii)]
The points $\mathcal{Q}_i, -Q, W_i$ lie on the line
$$y=(r_j+r_k)(x-\alpha_i).$$
\item[(iv)]
$\mathcal{Q}_i-Q=W_i.$
\end{itemize}

\end{thm}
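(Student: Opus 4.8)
The plan is to turn the four assertions into explicit statements about the coordinates of $\mathcal{Q}_i$, which I read off from the formulas \eqref{x1prod} and \eqref{chap} already derived; once those coordinates are in hand, (i)--(iii) are short checks and (iv) is immediate from the group law.

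First I would record the equivalence $P=W_i\iff r_i=0$: if $r_i=\sqrt{x_0-\alpha_i}=0$ then $x_0=\alpha_i$, and since $(x_0,y_0)\in E$ this forces $y_0=0$, i.e. $P=W_i$; the converse is clear. This settles (i) at once. Indeed $\mathcal{Q}_i$ is the half of $P$ obtained by keeping $r_i$ and flipping $r_j,r_k$; when $r_i=0$ this sign pattern is literally $(-r_1,-r_2,-r_3)$, and flipping all three signs leaves $x_1$ unchanged (see \eqref{x1}) while negating $y_1$ (see \eqref{chap}), so $\mathcal{Q}_i=(x_1,-y_1)=-Q$.

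For the generic case I would substitute $s_i=r_i,\ s_j=-r_j,\ s_k=-r_k$ into \eqref{x1prod} and \eqref{chap}, obtaining
$$x(\mathcal{Q}_i)=\alpha_i+(r_i-r_j)(r_i-r_k),\qquad y(\mathcal{Q}_i)=(r_j+r_k)(r_i-r_j)(r_i-r_k).$$
Part (iii) then falls out immediately: the point $W_i=(\alpha_i,0)$ lies on $y=(r_j+r_k)(x-\alpha_i)$; the two displayed formulas say precisely that $\mathcal{Q}_i$ lies on it; and for $-Q=(x_1,-y_1)$ one uses \eqref{chap} to write $-y_1$ as the product of all three pairwise sums $r_a+r_b$, which regroups as $(r_j+r_k)\cdot(r_i+r_j)(r_i+r_k)=(r_j+r_k)(x_1-\alpha_i)$ by \eqref{x1prod}. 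For (ii), both $-Q$ and $\mathcal{Q}_i$ are halves of the nonzero points $-P$ and $P$, hence are not $2$-torsion and in particular differ from $W_i$; and subtracting abscissas gives $x(\mathcal{Q}_i)-x(-Q)=-2\,r_i(r_j+r_k)$, which is nonzero because $r_i\ne0$ (as $P\ne W_i$), because $r_j+r_k\ne0$ (distinctness of the $\alpha$'s forces $r_j^2\ne r_k^2$), and because $\fchar(K)\ne2$. Thus $\mathcal{Q}_i\ne-Q$ and the three points are pairwise distinct.

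Finally, for (iv) I would first note the soft fact that $2\mathcal{Q}_i=P=2Q$ forces $2(\mathcal{Q}_i-Q)=\infty$, so $\mathcal{Q}_i-Q\in\{\infty,W_1,W_2,W_3\}$, and it only remains to identify it. When $P\ne W_i$, parts (ii) and (iii) exhibit $\mathcal{Q}_i,-Q,W_i$ as three distinct points of $E$ on a single non-vertical line, so the chord law gives $\mathcal{Q}_i+(-Q)+W_i=\infty$, whence $\mathcal{Q}_i-Q=-W_i=W_i$; when $P=W_i$, part (i) gives $\mathcal{Q}_i-Q=-2Q=-P=W_i$ directly. The only genuinely delicate point is guessing the correct line in (iii)---its slope must be $r_j+r_k$---after which each verification is a one-line symmetric-function identity; I expect the remaining care to lie in the sign bookkeeping (tracking which two roots flip for $\mathcal{Q}_i$) and in treating the degenerate case $P=W_i$ separately in (iv), since there the three collinear points collapse to a tangency and the plain chord argument no longer applies verbatim.
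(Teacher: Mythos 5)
Your proof is correct and follows essentially the same route as the paper's: explicit coordinates for $\mathcal{Q}_i$ from \eqref{x1prod} and \eqref{chap}, collinearity of $\mathcal{Q}_i$, $-Q$, $W_i$ on $y=(r_j+r_k)(x-\alpha_i)$, and the chord law for (iv), with the case $P=W_i$ handled separately. The one place you genuinely diverge is (ii): the paper argues softly (if $\mathcal{Q}_i=-Q$ then $P=-P$, so $P$ is some $W_j$, and then part (i) applied to $j$ gives $\mathcal{Q}_j=-Q=\mathcal{Q}_i$, a contradiction), whereas you compute $x(\mathcal{Q}_i)-x(Q)=-2r_i(r_j+r_k)$ and observe it is nonzero since $r_i\ne0$, $r_j^2\ne r_k^2$, and $\fchar(K)\ne2$; both arguments are valid, and yours has the minor advantage of exhibiting the exact discrepancy between the abscissas.
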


\begin{proof}
 First, assume that  $P=W_i$. In this case, formulas \eqref{x1}  and  \eqref{y1}  tell us that
$$Q=(\alpha_i+r_j r_k, - r_j r_k(r_j+r_k)),$$
which implies that
$$\mathcal{Q}_i=(\alpha_i+r_j r_k, r_j r_k(r_j+r_k))=-Q$$
and $$\mathcal{Q}_i-Q=-2Q=-P=P=W_i.$$
This proves (i) and a special case of (iv) when $P=W_i$.
Now assume that $P \ne W_i$ and prove  that the three points $\mathcal{Q}_i, -Q, W_i$  are {\sl distinct}. Since none of $Q_i$ and $-Q$ is of order $2$,   none of them is $W_i$. On the other hand, if $\mathcal{Q}_i= -Q$, then
$$2Q=P=2\mathcal{Q}_i=-2Q=-P,$$
and so $P$ has order $2$, say $P=W_j$.  Applying (a) to $j$ instead of $i$, we get $\mathcal{Q}_j= -Q$; but $\mathcal{Q}_i\ne\mathcal{Q}_j$ since $i\ne j$.  Therefore $\mathcal{Q}_i, -Q, W_i$ are three {\sl distinct} points.
This proves (ii).

Let us prove (iii).
  Since
$$x_1-\alpha_i=(r_i+r_j)(r_i+r_k), \ y_1=-(r_1+r_2)(r_2+r_3)(r_3+r_1),$$
we have $y_1=(r_j+r_k)(x_1-\alpha_i)$. Further
$$x(-\mathcal{Q}_i)-\alpha_i=(r_i-r_j)(r_i-r_k),$$
$$y(-\mathcal{Q}_i)=(r_i-r_j)(-r_j-r_k)(-r_k+r_i)=(r_j+r_k)\left(x(-\mathcal{Q}_i)-\alpha_i\right).$$
Therefore $\mathcal{Q}_i, -Q, W_i$ lie on the line
$$y=(r_j+r_k)(x-\alpha_i).$$

We have already  proven  (iv) when $P=W_i$. So, let us assume that $P \ne W_i$. Now (iv) follows from (iii) combined with (i).
\end{proof}


\section{Division by  $2^n$}
\label{power2}
Using the formulas above that describe the division by 2 on $E$, one may easily deduce the following necessary and sufficient condition of divisibility by any power of 2. For an overfield $L$ of $K$, we consider a sequence  of points $Q_{\mu}$ in $E(L)$  such that $Q_0=P$ and $2 Q_{\mu+1}=Q_{\mu}$ for all $\mu=0,1,2, \dots $. Let $r_1^{(\mu)}, r_2^{(\mu)}, r_3^{(\mu)}$ ($\mu=0,1,2, \dots $) be arbitrary sequences of elements of $L$ that satisfy the relations
$$(r_i^{(\mu)})^2=x(Q_{\mu})-\alpha_i.$$
Then for each permutation $i,j,k$ of $1,2,3$ we obtain, in light of the formula (\ref{x1prod}),
$$  x(Q_{\mu+1})-\alpha_i=
\bigl(r_i^{(\mu)}+r_j^{(\mu)}\bigr)\bigl(r_i^{(\mu)}+r_k^{(\mu)}\bigr),
$$
which implies that
$$(r_i^{(\mu+1)})^2=(r_i^{(\mu)}+r_j^{(\mu)})(r_i^{(\mu)}+r_k^{(\mu)}).$$
By changing the signs of  $r_i^{(\mu)}, r_j^{(\mu)}, r_k^{(\mu)}$ in the product $(r_i^{(\mu)}+r_j^{(\mu)})(r_i^{(\mu)}+r_k^{(\mu)})$, we obtain all possible values of the abscissas of $Q_{(\mu+1)}$ with $2Q_{\mu+1}=Q_{\mu}$.

Suppose that $Q_{\mu}\in E(K)$.  Then $Q_{\mu}$ is divisible by $2$ in $E(K)$ if and only if one may choose $r_i^{(\mu)}, r_j^{(\mu)}, r_k^{(\mu)}$ in such a way that $(r_i^{(\mu)}+r_j^{(\mu)})(r_i^{(\mu)}+r_k^{(\mu)})$ are squares in $K$ for all $i=1,2,3$. We proved the following statement.

\begin{thm}
\label{divisionByPower}
Let $P=(x_0,y_0)\in E(K)$. Let $r_1^{(\mu)}, r_2^{(\mu)}, r_3^{(\mu)}$ $($$\mu=0,1,2, \dots$ $)$ be  sequences of elements of $L$ that satisfy the relations
$$(r_i^{0})^2=r_i^2=x_0-\alpha_i, \ (r_i^{(\mu+1)})^2=(r_i^{(\mu)}+r_j^{(\mu)})(r_i^{(\mu)}+r_k^{(\mu)})$$
for all permutations $i,j,k$ of $1,2,3$.
Then $P$ is divisible by $2^n$ in $E(K)$ if and only if all $x_0-\alpha_i$ are squares in $K$, and, for each $\mu=0,1, \dots n-1$, one may choose square roots $r_1^{(\mu)}, r_2^{(\mu)}, r_3^{(\mu)}$ in such a way that the products $(r_i^{(\mu)}+r_j^{(\mu)})(r_i^{(\mu)}+r_k^{(\mu)})$  are squares in $K$ {\rm(}and therefore all $r_i^{(\mu)}$ lie in $K$ for  $\mu=0,1, \dots n-1${\rm)}.
\end{thm}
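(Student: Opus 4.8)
The plan is to prove Theorem \ref{divisionByPower} by induction on $n$, using Theorem \ref{th0} as the one-step engine and the abscissa formula \eqref{x1prod} to pass from one level of the halving tree to the next. The guiding observation is that, since $(r_i^{(\mu)})^2=x(Q_\mu)-\alpha_i$ by construction, Theorem \ref{th0} converts the geometric statement ``$Q_\mu\in E(K)$ is divisible by $2$'' into the purely algebraic statement ``$r_1^{(\mu)},r_2^{(\mu)},r_3^{(\mu)}\in K$''. Thus the entire assertion is really a bookkeeping of this equivalence applied $n$ times in a row.

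For the base case $n=1$, the claim is nothing but Theorem \ref{th0}: $P=Q_0$ is divisible by $2$ in $E(K)$ if and only if each $x_0-\alpha_i=(r_i^{(0)})^2$ is a square in $K$, i.e. if and only if $r_i^{(0)}\in K$. For the inductive step I would use that $P$ is divisible by $2^n$ in $E(K)$ exactly when $P$ is divisible by $2$ (so that a half $Q_1\in E(K)$ exists) and that half $Q_1$ is itself divisible by $2^{n-1}$. The first condition is handled by the base case. For the second I would apply the inductive hypothesis to $Q_1$ in place of $P$: the square roots naturally attached to $Q_1$ are the numbers $\sqrt{x(Q_1)-\alpha_i}$, and formula \eqref{x1prod} identifies $x(Q_1)-\alpha_i$ with $(r_i^{(0)}+r_j^{(0)})(r_i^{(0)}+r_k^{(0)})=(r_i^{(1)})^2$. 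Hence the square-root data of $Q_1$ is exactly the sequence $(r_i^{(\mu)})_{\mu\ge 1}$ with its index shifted down by one, and the product conditions at levels $\mu\ge 1$ are precisely those produced by the inductive hypothesis applied to $Q_1$. Feeding this in closes the induction.

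The step I expect to be the genuine obstacle is the quantifier hidden in the phrase ``one may choose square roots''. At each level there are four admissible sign patterns of $(r_1^{(\mu)},r_2^{(\mu)},r_3^{(\mu)})$ (those compatible with $r_1^{(\mu)}r_2^{(\mu)}r_3^{(\mu)}=-y(Q_\mu)$, as in \eqref{product}), and they correspond to the four halves of $Q_\mu$, which differ from one another by $2$-torsion and so need not share the same divisibility properties further down the tree. Consequently the criterion cannot be read as an independent condition level by level; it asserts the existence of one coherent chain of sign choices realizing $K$-rationality all the way down. The inductive framing above is designed to carry this coherence automatically, since applying the hypothesis to a single chosen half $Q_1$ already bundles a compatible choice of signs at all deeper levels. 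The only remaining care is the index bookkeeping: the $n$ successive applications of Theorem \ref{th0} require exactly that $r_i^{(\mu)}\in K$ for $\mu=0,\dots,n-1$, and this is what the listed product conditions deliver through the identity \eqref{x1prod}.
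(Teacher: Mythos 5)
Your proof is correct and follows essentially the same route as the paper: both iterate the division-by-$2$ criterion of Theorem \ref{th0}, using \eqref{x1prod} to identify $x(Q_{\mu+1})-\alpha_i$ with $\bigl(r_i^{(\mu)}+r_j^{(\mu)}\bigr)\bigl(r_i^{(\mu)}+r_k^{(\mu)}\bigr)$, so that $K$-rationality of the next half becomes the condition that these products be squares in $K$. The paper presents this as a direct chain of one-step equivalences rather than a formal induction on $n$, and it leaves implicit the sign-coherence point you rightly single out, but there is no substantive difference.
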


The knowledge of sequences $r_1^{(\mu)}, r_2^{(\mu)}, r_3^{(\mu)}$ allows us step by step to find the points $\frac{1}{2}P,  \frac{1}{4}P, \frac{1}{8}P$ etc.

\begin{ex}
Let $P=(x_0,y_0)$,   let $R$ be a  point of $E$ such that $4R=P$, and let $Q=2R=(x_1,y_1)$.  By formulas (\ref{x1}) and (\ref{chap}),
$$x_1=x_0+(r_1 r_2+r_2 r_3+r_3 r_1), \ y_1=-(r_1+r_2)(r_2+r_3)(r_3+r_1),$$
where the square roots
$$r_i=\sqrt{x_0-\alpha_i}, \ i=1,2,3,$$
are chosen in such a way that $r_1 r_2 r_3=-y_0$. Further, let
$$r_i^{(1)}=\sqrt{(r_i+r_j)(r_i+r_k)}$$
be square roots that are chosen in such a way that
$$r_1^{(1)} r_2^{(1)} r_3^{(1)}=-y_1=(r_1+r_2)(r_2+r_3)(r_3+r_1).$$
In light of (\ref{x1}) and (\ref{chap}),
$$x(R)=x_1+r_1^{(1)} r_2^{(1)}+r_2 ^{(1)}r_3^{(1)}+r_3^{(1)} r_1^{(1)},$$
$$y(R)=-(r_1^{(1)}+r_2^{(1)})(r_2^{(1)}+r_3^{(1)})(r_3^{(1)}+r_1^{(1)}),$$
which implies that
\begin{equation}\label{1/4}
 \begin{aligned}x(R)=x_0+(r_1 r_2+r_2 r_3+r_3 r_1)+(r_1^{(1)} r_2^{(1)}+r_2 ^{(1)}r_3^{(1)}+r_3^{(1)} r_1^{(1)}),\\
y(R)=-(r_1^{(1)}+r_2^{(1)})(r_2^{(1)}+r_3^{(1)})(r_3^{(1)}+r_1^{(1)}).\end{aligned}
\end{equation}
\end{ex}

\section{Torsion of elliptic curves}
\label{torsion}
In the sequel, we will freely use the following well-known elementary observation.

{\sl Let $\kappa$ be a nonzero element of $K$. Then there is a canonical isomorphism of the elliptic curves
$$E: y^2=(x-\alpha_1)(x-\alpha_2)(x-\alpha_3)$$
 and
$$E(\kappa): {y^{\prime}}^2=\left(x^{\prime}-\frac{\alpha_1}{\kappa^2}\right)
\left(x^{\prime}-\frac{\alpha_2}{\kappa^2}\right)\left(x^{\prime}-\frac{\alpha_3}{\kappa^2}\right)$$
that is given by the change of variables
$$x^{\prime}=\frac{x}{\kappa^2}, \ y^{\prime}=\frac{y}{\kappa^3}$$
and respects the group structure.
Under this isomorphism the point
$(\alpha_i,0)\in E(K)$ goes to $(\alpha_i / \kappa^2 ,0)\in E(\kappa)(K)$ for all $i=1,2,3$.
In addition, if $P=(0,y(P))$  lies in $E(K)$,
then it goes (under this isomorphism) to $(0,y(P)/\kappa^3)\in E(\kappa)(K)$.}

We will also use the following classical result of Hasse (Hasse bound)
\cite[Th. 4.2 on p. 97]{Wash}.
\begin{thm}
\label{hasse}
 If $q$ is a prime power, $\mathbb{F}_q$ a $q$-element finite field and $E$ is an elliptic curve over $\mathbb{F}_q$, then $E(\mathbb{F}_q)$ is a finite abelian group whose cardinality $|E(\mathbb{F}_q)|$ satisfies the inequalities
\begin{equation}
\label{HasseB}
q-2\sqrt{q}+1 \le |E(\mathbb{F}_q)|\leq q+2\sqrt{q}+1.
\end{equation}
\end{thm}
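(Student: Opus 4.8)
The plan is to follow the classical Frobenius-endomorphism argument, working inside the endomorphism ring $\End(E)$ of $E$ over a fixed algebraic closure of $\F_q$. First I would introduce the $q$-power Frobenius endomorphism $\phi\in\End(E)$, which is characterized by the property that a geometric point $P$ of $E$ lies in $E(\F_q)$ if and only if $\phi(P)=P$. Hence $E(\F_q)=\ker(\phi-1)$, and the whole problem reduces to estimating the size of this kernel. (This reduction, and everything below, is characteristic-free, so the restriction $\fchar(K)\ne 2$ imposed elsewhere in the paper plays no role here.)

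The next step is to identify $|E(\F_q)|$ with the degree of the isogeny $\phi-1$. The differential of $\phi$ vanishes (Frobenius is purely inseparable), so the differential of $\phi-1$ equals that of $-1$, which is nonzero; thus $\phi-1$ is \emph{separable}, and for a separable isogeny the number of geometric points in the kernel equals the degree. Therefore
$$|E(\F_q)|=\#\ker(\phi-1)=\deg(\phi-1).$$

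The heart of the matter is the algebraic structure of the degree map $\deg\colon\End(E)\to\Z_{\ge 0}$. The main step, and the one I expect to be the chief obstacle, is to prove that $\deg$ is a \emph{positive-definite quadratic form}: that is, that the pairing
$$L(\alpha,\beta):=\deg(\alpha+\beta)-\deg(\alpha)-\deg(\beta)$$
is $\Z$-bilinear. This is the genuinely nontrivial input; it is usually extracted from the theory of divisors and line bundles on $E$ (or from the Weil pairing), and it is where all the real work lies. Granting it, together with the elementary facts $\deg(\phi)=q$, $\deg(1)=1$, and $\deg(m\alpha)=m^2\deg(\alpha)$ for $m\in\Z$, I set $a:=L(\phi,1)=\deg(\phi)+\deg(1)-\deg(\phi-1)=q+1-|E(\F_q)|$, so that $|E(\F_q)|=q+1-a$.

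Finally, I would exploit positivity. Since $\deg$ is non-negative on every element $m\cdot 1+n\,\phi$ with $m,n\in\Z$, the integer-valued form
$$Q(m,n)=\deg(m+n\,\phi)=m^2+a\,mn+q\,n^2$$
is positive semidefinite on $\Z^2$, hence on $\R^2$ by homogeneity and continuity. Its discriminant must therefore be non-positive, $a^2-4q\le 0$, which gives $|a|\le 2\sqrt q$. Substituting into $|E(\F_q)|=q+1-a$ yields $q+1-2\sqrt q\le |E(\F_q)|\le q+1+2\sqrt q$, which is exactly \eqref{HasseB}. The only serious difficulty is the bilinearity of $L$ in the third step; once that quadratic-form property is in hand, the discriminant (Cauchy--Schwarz) conclusion is immediate.
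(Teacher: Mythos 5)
Your outline is correct, but note that the paper does not prove this statement at all: it quotes the Hasse bound as a classical result and simply cites Washington's book, so there is no internal proof to compare against. What you have written is the standard Frobenius argument that one finds in that reference (and in Silverman): identify $E(\mathbb{F}_q)$ with $\ker(\phi-1)$, use inseparability of $\phi$ to see that $\phi-1$ is separable and hence that $|E(\mathbb{F}_q)|=\deg(\phi-1)$, and then deduce $|q+1-|E(\mathbb{F}_q)||\le 2\sqrt{q}$ from the fact that $\deg$ is a positive quadratic form on $\End(E)$, via the non-positivity of the discriminant of $m^2+amn+qn^2$. All of these reductions are carried out correctly, including the small point that positivity on $\mathbb{Z}^2$ passes to $\mathbb{R}^2$ by homogeneity and density. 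The one substantive ingredient --- that the pairing $L(\alpha,\beta)=\deg(\alpha+\beta)-\deg(\alpha)-\deg(\beta)$ is bilinear --- is not proved, but you correctly isolate it as the sole nontrivial input and indicate where it comes from (divisors/line bundles or the Weil pairing); as a proof \emph{sketch} this is exactly the right skeleton, and filling in that lemma would complete it.
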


Another result that we are going to use is the following immediate corollary of a celebrated theorem of B. Mazur  (\cite{Mazur},  \cite[Th. 2.5.2 and p. 187]{Robledo}).

\begin{thm}
\label{mazurQ}
If $E$ is an elliptic curve over $\Q$ and the torsion subgroup $E(\Q)_t$ of $E(\Q)$ is not cyclic, then
$E(\Q)_t$ is isomorphic to $\mathbb{Z}/2m\mathbb{Z}\oplus \mathbb{Z}/2\mathbb{Z}$ with $m=1,2,3$ or $4$.
In particular, if $m=3$ or $4$ and $E(\Q)$ contains a subgroup isomorphic to $\mathbb{Z}/2m\mathbb{Z}\oplus \mathbb{Z}/2\mathbb{Z}$,
then $E(\Q)_t$ is isomorphic to $\mathbb{Z}/2m\mathbb{Z}\oplus \mathbb{Z}/2\mathbb{Z}$.
\end{thm}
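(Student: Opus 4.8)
The plan is to deduce both assertions directly from Mazur's classification, which we are assuming: the torsion subgroup $E(\Q)_t$ of an elliptic curve over $\Q$ is isomorphic either to a cyclic group $\mathbb{Z}/N\mathbb{Z}$ with $N\in\{1,2,\dots,10,12\}$ or to $\mathbb{Z}/2m\mathbb{Z}\oplus\mathbb{Z}/2\mathbb{Z}$ with $m\in\{1,2,3,4\}$. The first assertion is then immediate: each group $\mathbb{Z}/2m\mathbb{Z}\oplus\mathbb{Z}/2\mathbb{Z}$ contains the non-cyclic subgroup $\mathbb{Z}/2\mathbb{Z}\oplus\mathbb{Z}/2\mathbb{Z}$ (as $2m$ is even), whereas the groups $\mathbb{Z}/N\mathbb{Z}$ are cyclic, so the \emph{non-cyclic} members of Mazur's list are precisely the four groups $\mathbb{Z}/2m\mathbb{Z}\oplus\mathbb{Z}/2\mathbb{Z}$, $m=1,2,3,4$. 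Hence if $E(\Q)_t$ fails to be cyclic it must be one of these.

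For the ``in particular'' part I would first note that a subgroup of $E(\Q)$ isomorphic to $\mathbb{Z}/2m\mathbb{Z}\oplus\mathbb{Z}/2\mathbb{Z}$ consists entirely of torsion points and is therefore contained in $E(\Q)_t$. This subgroup is non-cyclic, so $E(\Q)_t$ is non-cyclic as well, and by the first assertion $E(\Q)_t\cong\mathbb{Z}/2m'\mathbb{Z}\oplus\mathbb{Z}/2\mathbb{Z}$ for some $m'\in\{1,2,3,4\}$. The remaining task is to show $m'=m$ when $m=3$ or $4$, and the key input is the resulting embedding $\mathbb{Z}/2m\mathbb{Z}\oplus\mathbb{Z}/2\mathbb{Z}\hookrightarrow\mathbb{Z}/2m'\mathbb{Z}\oplus\mathbb{Z}/2\mathbb{Z}$.

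The decisive step is an elementary group-theoretic comparison. The exponent of $\mathbb{Z}/2m\mathbb{Z}\oplus\mathbb{Z}/2\mathbb{Z}$ equals $2m$, so the embedding forces $2m\mid 2m'$, that is $m\mid m'$; equivalently one may compare cardinalities, $4m\mid 4m'$. For $m=3$ the divisibility $3\mid m'$ with $m'\le 4$ gives $m'=3$, and for $m=4$ the divisibility $4\mid m'$ with $m'\le 4$ gives $m'=4$, so in both cases $E(\Q)_t\cong\mathbb{Z}/2m\mathbb{Z}\oplus\mathbb{Z}/2\mathbb{Z}$. I do not expect a genuine obstacle, since all the depth resides in Mazur's theorem; the only point needing care is that one must establish the non-cyclicity of $E(\Q)_t$ \emph{before} invoking the reduced list from the first assertion, which is exactly why the non-cyclicity of the given subgroup is used.
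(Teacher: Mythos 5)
Your proof is correct and follows the only natural route, which is exactly what the paper intends: the paper states this theorem as an ``immediate corollary'' of Mazur's classification and gives no further argument, and your write-up simply supplies the routine details (non-cyclicity of the containing group, then the divisibility $m\mid m'$ via exponent or cardinality) that the authors leave implicit. No gaps.
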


The next assertion  follows readily from the list of possible torsion subgroups of elliptic curves over quadratic fields   obtained by S. Kamienny \cite{Kam} and M.A. Kenku - F. Momose \cite{Ken} (see also \cite[Th. 1]{KamN}).

\begin{thm}
\label{Kquad}
Let $E$ be an elliptic curve over a quadratic field $K$. Assume that all points of order 2 on $E$ are defined over $K$.
Let $E(K)_t$ be the torsion subgroup of $E(K)$. Then  $E(K)_t$ is isomorphic either to $\mathbb{Z}/4\mathbb{Z}\oplus \mathbb{Z}/4\mathbb{Z}$
or to  $\mathbb{Z}/2m\mathbb{Z}\oplus \mathbb{Z}/2\mathbb{Z}$ with $1\le m\le 6$.

In particular,
$E(K)_t$  enjoys the following properties.

\begin{enumerate}
\item
If $m=5$ or $6$ and  $E(K)$ contains a subgroup  isomorphic to  $\mathbb{Z}/2m\mathbb{Z}\oplus \mathbb{Z}/2\mathbb{Z}$, then
$E(K)_t$ is  isomorphic to  $\mathbb{Z}/2m\mathbb{Z}\oplus \mathbb{Z}/2\mathbb{Z}$.
\item
If  $E(K)$ contains a subgroup  isomorphic to  $\mathbb{Z}/4\mathbb{Z}\oplus \mathbb{Z}/4\mathbb{Z}$, then
$E(K)_t$ is  isomorphic to   $\mathbb{Z}/4\mathbb{Z}\oplus \mathbb{Z}/4\mathbb{Z}$.
\end{enumerate}
\end{thm}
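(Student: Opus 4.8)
The plan is to deduce everything from the Kamienny--Kenku--Momose classification \cite{Kam,Ken} of the torsion subgroups that occur for elliptic curves over quadratic fields. That theorem provides a finite list of abelian groups, each of which is realized as $E(K)_t$ for some $E$ over some quadratic $K$: the cyclic groups $\mathbb{Z}/N\mathbb{Z}$ with $1\le N\le 16$ or $N=18$, together with $\mathbb{Z}/2\mathbb{Z}\oplus\mathbb{Z}/2N\mathbb{Z}$ ($1\le N\le 6$), $\mathbb{Z}/3\mathbb{Z}\oplus\mathbb{Z}/3N\mathbb{Z}$ ($N=1,2$), and $\mathbb{Z}/4\mathbb{Z}\oplus\mathbb{Z}/4\mathbb{Z}$. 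The hypothesis that all three points of order $2$ are $K$-rational says exactly that the full $2$-torsion $\mathbb{Z}/2\mathbb{Z}\oplus\mathbb{Z}/2\mathbb{Z}$ is a subgroup of $E(K)_t$; so the entire argument is a purely group-theoretic bookkeeping over this short list.

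First I would run through the list and discard every group that does not contain a copy of $\mathbb{Z}/2\mathbb{Z}\oplus\mathbb{Z}/2\mathbb{Z}$. All the cyclic groups are eliminated immediately. Among the noncyclic entries, $\mathbb{Z}/3\mathbb{Z}\oplus\mathbb{Z}/3\mathbb{Z}$ has trivial $2$-torsion, and $\mathbb{Z}/3\mathbb{Z}\oplus\mathbb{Z}/6\mathbb{Z}$ has $2$-torsion only $\mathbb{Z}/2\mathbb{Z}$, so both are excluded. What survives is exactly $\mathbb{Z}/2m\mathbb{Z}\oplus\mathbb{Z}/2\mathbb{Z}$ for $1\le m\le 6$ (rewriting $\mathbb{Z}/2\mathbb{Z}\oplus\mathbb{Z}/2N\mathbb{Z}$ with $m=N$) and $\mathbb{Z}/4\mathbb{Z}\oplus\mathbb{Z}/4\mathbb{Z}$. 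This already establishes the main dichotomy of the theorem.

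For the sharper assertions (1) and (2) the strategy is the same elementary maneuver repeated: if $E(K)$ is known to contain a prescribed subgroup $G$, then $E(K)_t$ is a survivor admitting an embedded copy of $G$, and I would check that only the claimed one does. Writing a general survivor of the first kind as $\mathbb{Z}/2m'\mathbb{Z}\oplus\mathbb{Z}/2\mathbb{Z}$, for $m=5$ containing $\mathbb{Z}/10\mathbb{Z}\oplus\mathbb{Z}/2\mathbb{Z}$ forces an element of order $10$, which requires $5\mid m'$, hence $m'=5$; since $\mathbb{Z}/4\mathbb{Z}\oplus\mathbb{Z}/4\mathbb{Z}$ has no element of order $5$, we get $E(K)_t\cong\mathbb{Z}/10\mathbb{Z}\oplus\mathbb{Z}/2\mathbb{Z}$. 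For $m=6$ containing $\mathbb{Z}/12\mathbb{Z}\oplus\mathbb{Z}/2\mathbb{Z}$ forces an element of order $12$, which needs $6\mid m'$, hence $m'=6$, while $\mathbb{Z}/4\mathbb{Z}\oplus\mathbb{Z}/4\mathbb{Z}$ is excluded both by its order $16<24$ and by the absence of order-$12$ elements, so $E(K)_t\cong\mathbb{Z}/12\mathbb{Z}\oplus\mathbb{Z}/2\mathbb{Z}$. For (2), if $\mathbb{Z}/4\mathbb{Z}\oplus\mathbb{Z}/4\mathbb{Z}\subseteq E(K)$, I would count elements killed by $4$: in $\mathbb{Z}/2m'\mathbb{Z}\oplus\mathbb{Z}/2\mathbb{Z}$ there are at most $4\cdot 2=8$ of them, whereas $\mathbb{Z}/4\mathbb{Z}\oplus\mathbb{Z}/4\mathbb{Z}$ has all $16$, so no survivor of the first kind can house it, forcing $E(K)_t\cong\mathbb{Z}/4\mathbb{Z}\oplus\mathbb{Z}/4\mathbb{Z}$.

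There is no analytic or geometric obstacle here once the deep classification is granted; the only thing requiring care is the subgroup and order bookkeeping, namely verifying exhaustively that, in each of the two sharper cases, no surviving group other than the asserted one can contain the prescribed $G$. That check is routine but should be carried out explicitly over the finite list so that the uniqueness of the candidate is transparent.
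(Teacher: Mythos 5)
Your argument is correct and takes essentially the same route as the paper, which derives the theorem directly from the Kamienny and Kenku--Momose classification of torsion subgroups over quadratic fields and states that the result ``follows readily'' from that list. You have simply carried out explicitly the filtering by full $2$-torsion and the subgroup bookkeeping for parts (1) and (2) that the paper leaves to the reader.
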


\section{Rational points of order 4}
\label{l4}

We are going to describe explicitly elliptic curves  (\ref{E2}) that contain a $K$-point of order $4$. In order to do that,
we consider the elliptic curve
$$\mathcal{E}_{1,\lambda}: y^2=(x+\lambda^2)(x+1)x$$
 over $K$. Here $\lambda$ is an element of $K\setminus \{0, \pm 1\}$.
 In this case, we have
$$\alpha_1=-\lambda^2,\ \alpha_2=-1,\ \alpha_3=0.$$
Notice that
$$\mathcal{E}_{1,\lambda}=\mathcal{E}_{1,-\lambda}.$$
All three differences
$$\alpha_3-\alpha_1=\lambda^2,\ \alpha_3-\alpha_2=1^2,\  \alpha_3 - \alpha_3=0^2$$
are squares in $K$. Dividing the order $2$ point $W_3=(0,0)\in \mathcal{E}_{1,\lambda}(K)$ by   $2$,
we get $r_3=0$ and the four choices
$$r_1=\pm \lambda,\ r_2=\pm 1.$$
Now  Example \ref{halfW3}  gives us four points $Q$ with $2Q=W_3$, namely,
$$(\lambda, \mp (\lambda+1)\lambda),\  (-\lambda, \pm (\lambda-1)\lambda).$$
This implies that the group $\mathcal{E}_{1,\lambda}(K)$ contains the subgroup generated  by any
$Q$ and  $W_1$,  which is $\mathbb{Z}/4\mathbb{Z}\oplus \mathbb{Z}/2\mathbb{Z}$.

\begin{rem}
\label{W3L}
Our computations show that if $ {Q}$ is a $K$-point on $ {E}_{1,\lambda}$,
then
$$2{Q}=W_3 \text{ if and only if }  x({Q})=\pm \lambda.$$
Both cases (signs) do occur.
\end{rem}

\begin{rem}
There is another family of elliptic curves  (\cite[Table 3 on p.  217]{Kubert} (see also \cite[Part 2]{Silver}, \cite[Appendix E]{Robledo}))
$$\mathfrak{E}_{1,t}: y^2+xy-\left(t^2-\frac{1}{16}\right)y=x^3-\left(t^2-\frac{1}{16}\right)x^2$$
 whose group of $K$-points contains a subgroup isomorphic to $\mathbb{Z}/4\mathbb{Z}\oplus \mathbb{Z}/2\mathbb{Z}$.
 If we put
 $$y_1:=y+\frac{x-(t^2-\frac{1}{16})}{2},$$
 then the equation  may be rewritten  as
 $$y_1^2=x^3-\left(t^2-\frac{1}{16}\right)x^2+\left[\frac{x-(t^2-\frac{1}{16})}{2}\right]^2=\left(x-t^2+\frac{1}{16}\right)\left(x+\frac{t}{2}+\frac{1}{8}\right)\left(x-\frac{t}{2}+\frac{1}{8}\right).$$
 If we put $x_1:=x-t^2+1/16$, then the equation becomes
 $$y_1^2=x_1\left(x_1+\left(t+\frac{1}{4}\right)^2\right)\left(x_1+\left(t-\frac{1}{4}\right)^2\right),$$
 which defines the elliptic curve
   $\mathcal{E}_{1,\lambda}(1/\kappa)$ with
   $$\lambda=\frac{t-\frac{1}{4}}{t+\frac{1}{4}}, \  \kappa=t+\frac{1}{4}.$$ In particular, $\mathfrak{E}_{1,t}$ is isomorphic to  $\mathcal{E}_{1,\lambda}$.
\end{rem}

\begin{thm}
\label{family2}
Let $E$ be an elliptic curve over $K$.
 Then $E(K)$ contains a subgroup  isomorphic to  $\mathbb{Z}/4\mathbb{Z}\oplus \mathbb{Z}/2\mathbb{Z}$
if and only if
 there exists  $\lambda \in K
\setminus \{0,\pm 1\}$ such that $E$ is isomorphic to $\mathcal{E}_{1,\lambda}$.
\end{thm}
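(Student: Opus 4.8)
The plan is to prove the two implications separately, with the forward (``if'') direction being essentially free and the converse carrying the content.

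For the ``if'' direction, suppose $E$ is isomorphic to $\mathcal{E}_{1,\lambda}$ for some $\lambda \in K \setminus \{0, \pm 1\}$. The computation preceding the theorem already exhibits, inside $\mathcal{E}_{1,\lambda}(K)$, a point $Q$ of order $4$ with $2Q = W_3$ together with the order-$2$ point $W_1$, and checks that $Q$ and $W_1$ generate a copy of $\mathbb{Z}/4\mathbb{Z} \oplus \mathbb{Z}/2\mathbb{Z}$. Transporting this subgroup across the isomorphism $\mathcal{E}_{1,\lambda} \cong E$ yields the desired subgroup of $E(K)$, so nothing further is needed here.

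For the converse, assume $E(K)$ contains a subgroup $H$ isomorphic to $\mathbb{Z}/4\mathbb{Z}\oplus\mathbb{Z}/2\mathbb{Z}$. First I would observe that $H$ already contains the full $2$-torsion: writing $Q$ for a generator of order $4$ and $W$ for a complementary generator of order $2$, the four elements $\infty, 2Q, W, 2Q+W$ are exactly the points of order dividing $2$ in $H$, and they constitute all of $E[2]$. Hence every $2$-torsion point of $E$ is defined over $K$, and since $\fchar(K) \ne 2$ we may complete the square in $y$ and factor the resulting cubic to put $E$ into the standard shape (\ref{E2}), with the three roots $\alpha_1, \alpha_2, \alpha_3 \in K$ being the abscissas of the three nontrivial $2$-torsion points. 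I then relabel these roots so that the specific order-$2$ point $2Q$ equals $W_3 = (\alpha_3, 0)$.

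The key input is now Theorem \ref{th0}. Since $W_3 = 2Q$ is divisible by $2$ in $E(K)$, the theorem guarantees that all three differences $\alpha_3 - \alpha_i$ are squares in $K$; write $\alpha_3 - \alpha_1 = r_1^2$ and $\alpha_3 - \alpha_2 = r_2^2$ with $r_1, r_2 \in K$ (the third difference being $0$). Next I would normalize: the translation of the abscissa carrying $\alpha_3$ to $0$ turns (\ref{E2}) into $y^2 = (x + r_1^2)(x + r_2^2)x$, and then the scaling isomorphism of Section \ref{torsion} with $\kappa = r_2$ (note $r_2 \ne 0$, as $\alpha_2 \ne \alpha_3$) rescales this to ${y'}^2 = (x' + r_1^2/r_2^2)(x' + 1)x'$. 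Setting $\lambda := r_1/r_2 \in K$ identifies the target with $\mathcal{E}_{1,\lambda}$, whence $E \cong \mathcal{E}_{1,\lambda}$ over $K$. Finally I would check the constraint $\lambda \notin \{0, \pm 1\}$: we have $\lambda \ne 0$ because $r_1 \ne 0$ (as $\alpha_1 \ne \alpha_3$), and $\lambda \ne \pm 1$ because $\lambda = \pm 1$ would force $r_1^2 = r_2^2$, i.e.\ $\alpha_1 = \alpha_2$, contradicting the distinctness of the roots.

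There is no serious obstacle here; the argument is a repackaging of Theorem \ref{th0}. The only points demanding care are bookkeeping ones: verifying that the hypothesis $\mathbb{Z}/4\mathbb{Z}\oplus\mathbb{Z}/2\mathbb{Z} \subseteq E(K)$ (rather than merely a point of order $4$) is what forces the \emph{full} $2$-torsion to be rational and hence the normal form (\ref{E2}), and keeping the labeling of the roots consistent so that the chosen point of order $4$ doubles precisely onto $W_3$, which is the root sent to $0$ under the normalization.
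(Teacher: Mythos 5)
Your proposal is correct and follows essentially the same route as the paper's proof: both directions rest on the explicit order-$4$ points constructed before the theorem, on Theorem \ref{th0} applied to the order-$2$ point that is divisible by $2$, and on the normalization $\alpha_3=0$ followed by the rescaling with $\kappa=r_2$ giving $\lambda=r_1/r_2$. Your extra bookkeeping (identifying $E[2]\subset H$ explicitly and verifying $\lambda\notin\{0,\pm1\}$ from the distinctness of the roots) only makes explicit what the paper states more tersely.
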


\begin{proof}
We already know that $\mathcal{E}_{1,\lambda}(K)$ contains a subgroup isomorphic to $\mathbb{Z}/4\mathbb{Z}\oplus \mathbb{Z}/2\mathbb{Z}$.
  Conversely,  suppose that $E$ is an elliptic curve over $K$ such that
 $E(K)$ contains a subgroup isomorphic to  $\mathbb{Z}/4\mathbb{Z}\oplus \mathbb{Z}/2\mathbb{Z}$.
Then  $E(K)$ contains all three points of order 2, and  therefore $E$ can be represented in the  form \eqref{E2}.
 It is also clear that at least one of the points \eqref{W2} is divisible by 2 in $E(K)$. Suppose that $W_3$ is divisible by $2$. We may assume that
 $\alpha_3=0$. By Theorem \ref{th0},  both nonzero differences
$$-\alpha_1=\alpha_3-\alpha_1, \  -\alpha_2=\alpha_3-\alpha_2$$
are squares in  $K$; in addition,  they are {\sl distinct} elements of $K$. Thus there are nonzero  $a,b \in K$ such that $a \ne \pm b$ and
$-\alpha_1=a^2,\  -\alpha_2=b^2$.
Since $\alpha_3=0$,   the equation for $E$ is
$$E: y^2=(x+a^2)(x+b^2)x.$$
If we put $\kappa=b$, then we obtain that $E$ is isomorphic to
$$E(\kappa): {y^{\prime}}^2=\left(x^{\prime}+\frac{a^2}{b^2}\right)(x^{\prime}+1) x^{\prime},$$
which is nothing else but $\mathcal{E}_{1,\lambda}$ with
$\lambda=a/b$.
\end{proof}

\begin{cor}
Let $E$ be an elliptic curve over $\mathbb{F}_5$.
The group $E(\mathbb{F}_5)$ is isomorphic to $\mathbb{Z}/4\mathbb{Z}\oplus \mathbb{Z}/2\mathbb{Z}$ if and only if
$E$ is isomorphic to the elliptic curve
$y^2=x^3-x$.
\end{cor}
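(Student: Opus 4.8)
The plan is to deduce both implications from Theorem~\ref{family2} combined with the Hasse bound (Theorem~\ref{hasse}), using that the target group $\mathbb{Z}/4\mathbb{Z}\oplus\mathbb{Z}/2\mathbb{Z}$ has order $8$. For the ``only if'' direction I would start from the observation that $\mathbb{F}_5=\{0,\pm1,\pm2\}$, so the admissible parameter set in Theorem~\ref{family2} is $\mathbb{F}_5\setminus\{0,\pm1\}=\{2,-2\}$. If $E(\mathbb{F}_5)\cong\mathbb{Z}/4\mathbb{Z}\oplus\mathbb{Z}/2\mathbb{Z}$, then $E(\mathbb{F}_5)$ contains (namely, equals) a subgroup isomorphic to $\mathbb{Z}/4\mathbb{Z}\oplus\mathbb{Z}/2\mathbb{Z}$, so Theorem~\ref{family2} yields $E\cong\mathcal{E}_{1,\lambda}$ with $\lambda\in\{2,-2\}$. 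Since $\mathcal{E}_{1,\lambda}=\mathcal{E}_{1,-\lambda}$, both values give the single curve $\mathcal{E}_{1,2}$. I would then simply expand its defining equation: for $\lambda=2$ we have $\lambda^2=4\equiv-1\pmod 5$, hence $y^2=(x+\lambda^2)(x+1)x=(x-1)(x+1)x=x^3-x$, which identifies $\mathcal{E}_{1,2}$ with the stated curve $y^2=x^3-x$.

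For the ``if'' direction I would argue in the reverse order. Assuming $E$ is isomorphic to $y^2=x^3-x$, the expansion above shows $E\cong\mathcal{E}_{1,2}$ (with the distinct roots $\alpha_1=1,\alpha_2=4,\alpha_3=0$ in $\mathbb{F}_5$, so that $E$ is genuinely an elliptic curve). By the computation preceding Theorem~\ref{family2}, $\mathcal{E}_{1,2}(\mathbb{F}_5)$ contains a subgroup isomorphic to $\mathbb{Z}/4\mathbb{Z}\oplus\mathbb{Z}/2\mathbb{Z}$, a group of order $8$. On the other hand, Theorem~\ref{hasse} with $q=5$ gives $|E(\mathbb{F}_5)|\le 5+2\sqrt5+1<11$, so $|E(\mathbb{F}_5)|\le 10$. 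Lagrange's theorem forces $8\mid|E(\mathbb{F}_5)|$, and the only multiple of $8$ not exceeding $10$ is $8$ itself; therefore $|E(\mathbb{F}_5)|=8$. The order-$8$ subgroup then exhausts $E(\mathbb{F}_5)$, so $E(\mathbb{F}_5)\cong\mathbb{Z}/4\mathbb{Z}\oplus\mathbb{Z}/2\mathbb{Z}$.

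The only genuinely delicate point—and thus the main obstacle—is excluding the possible cardinalities $9$ and $10$ in the ``if'' direction, since the Hasse interval $[2,10]$ a priori permits them. I expect the essential input to be the interplay between the \emph{lower} information (an explicit order-$8$ subgroup, forcing divisibility by $8$ via Lagrange) and the \emph{upper} Hasse bound; neither alone suffices, but together they pin the order down to $8$. Everything else reduces to the bookkeeping already recorded before Theorem~\ref{family2} and to the elementary identity $x^3-x=(x+\lambda^2)(x+1)x$ over $\mathbb{F}_5$ with $\lambda=2$.
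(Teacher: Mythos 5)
Your proposal is correct and follows essentially the same route as the paper: Theorem~\ref{family2} pins down $\lambda=\pm2$ (hence $\lambda^2=-1$ and the equation $y^2=x^3-x$) for the ``only if'' direction, and the order-$8$ subgroup combined with the Hasse bound forces $|E(\mathbb{F}_5)|=8$ for the ``if'' direction. The only cosmetic difference is that the paper phrases the final step as checking $|E(\mathbb{F}_5)|<16$ rather than $\le 10$, which is the same divisibility argument.
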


\begin{proof}
Suppose that $E(\mathbb{F}_5)$ is isomorphic to $\mathbb{Z}/4\mathbb{Z}\oplus \mathbb{Z}/2\mathbb{Z}$.
By Theorem \ref{family2}, $E$ is isomorphic to
$$y^2=(x+\lambda^2)(x+1)x \ \text{ with }
\lambda \in \mathbb{F}_5 \setminus  \{0,1,-1\}.$$
 This implies that $\lambda=\pm 2, \lambda^2=-1$, and so $E$ is isomorphic to
$$\mathcal{E}_{1,2}: y^2=(x-1)(x+1)=x^3-x.$$
Now we need to check that $\mathcal{E}_{1,2}(\mathbb F_5)\cong
\mathbb{Z}/4\mathbb{Z}\oplus \mathbb{Z}/2\mathbb{Z}$.
By Theorem \ref{family2}, $E(\mathbb{F}_5)$ contains a subgroup isomorphic to $\mathbb{Z}/4\mathbb{Z}\oplus \mathbb{Z}/2\mathbb{Z}$; in particular, $8$ divides $|E(\mathbb{F}_5)|$.
In order to finish the proof, it suffices to check that $|E(\mathbb{F}_5)|<16$,
but this inequality follows from the Hasse bound \eqref{HasseB}
$$|E(\mathbb{F}_5)|\le 5+2\sqrt{5}+1<11.$$
\end{proof}

\begin{cor}
Let   $E$ be an elliptic curve over $\mathbb{F}_7$.
The group $E(\mathbb{F}_7)$ is isomorphic to $\mathbb{Z}/4\mathbb{Z}\oplus \mathbb{Z}/2\mathbb{Z}$ if and only if
$E$ is isomorphic to
the elliptic curve
$y^2=(x+2)(x+1)x$.
\end{cor}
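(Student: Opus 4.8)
The plan is to follow the template of the preceding corollary for $\mathbb{F}_5$, the essential new feature being that several values of the parameter $\lambda$ are now admissible, so the real work lies in checking that they all yield the same curve up to isomorphism.

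I would begin with the converse (harder) direction. If $E(\mathbb{F}_7)\cong \mathbb{Z}/4\mathbb{Z}\oplus\mathbb{Z}/2\mathbb{Z}$, then by Theorem \ref{family2} the curve $E$ is isomorphic to $\mathcal{E}_{1,\lambda}$ for some $\lambda\in\mathbb{F}_7\setminus\{0,\pm1\}=\{2,3,4,5\}$. Exploiting the identity $\mathcal{E}_{1,\lambda}=\mathcal{E}_{1,-\lambda}$ together with $4\equiv-3$ and $5\equiv-2\pmod 7$, this list collapses to the two candidates $\lambda=2$ and $\lambda=3$.

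The key step, which I expect to be the main obstacle, is to show that these two candidates define \emph{isomorphic} curves over $\mathbb{F}_7$, so that the isomorphism class is genuinely unique (in contrast with the $\mathbb{F}_5$ case, where only $\lambda=\pm2$ occurred and no such collapse was needed). Here I would apply the canonical isomorphism recalled at the start of Section \ref{torsion} with scaling factor $\kappa=2$, so that $\kappa^2=4$. It sends $\mathcal{E}_{1,2}\colon y^2=(x+4)(x+1)x$, whose $2$-torsion abscissas are $\{-4,-1,0\}$, to the curve with abscissas $\{-4/4,\,-1/4,\,0\}=\{-1,-2,0\}$ (using $1/4=2$ in $\mathbb{F}_7$), that is, to $\mathcal{E}_{1,3}\colon y^2=(x+2)(x+1)x$, since $3^2=9=2$ in $\mathbb{F}_7$. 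Hence every admissible $\lambda$ gives $E\cong\mathcal{E}_{1,3}$, which is exactly the asserted curve.

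Finally, for the direct implication I would run the counting argument verbatim as for $\mathbb{F}_5$. Since $3\in\mathbb{F}_7\setminus\{0,\pm1\}$, Theorem \ref{family2} guarantees that $\mathcal{E}_{1,3}(\mathbb{F}_7)$ contains a subgroup isomorphic to $\mathbb{Z}/4\mathbb{Z}\oplus\mathbb{Z}/2\mathbb{Z}$, so $8$ divides $|E(\mathbb{F}_7)|$. The Hasse bound of Theorem \ref{hasse} gives $|E(\mathbb{F}_7)|\le 7+2\sqrt7+1<16$, which forces the order to equal $8$; the group then coincides with its order-$8$ subgroup $\mathbb{Z}/4\mathbb{Z}\oplus\mathbb{Z}/2\mathbb{Z}$. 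I anticipate that identifying the admissible $\lambda$'s and exhibiting the explicit scaling $\kappa=2$ will be the only genuinely new point compared with the $\mathbb{F}_5$ corollary.
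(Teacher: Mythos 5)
Your proposal is correct and follows essentially the same route as the paper: reduce via Theorem \ref{family2} to $\lambda=\pm2,\pm3$, identify $\mathcal{E}_{1,2}$ with $\mathcal{E}_{1,3}$ through the scaling isomorphism with $\kappa=2$ (exactly the paper's observation that $\mathcal{E}_{1,3}=\mathcal{E}_{1,2}(2)$ since $1/4=2$ in $\mathbb{F}_7$), and then combine the guaranteed $\mathbb{Z}/4\mathbb{Z}\oplus\mathbb{Z}/2\mathbb{Z}$ subgroup with the Hasse bound $|E(\mathbb{F}_7)|\le 7+2\sqrt{7}+1<16$ to force the order to be $8$. No gaps.
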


\begin{proof}
Suppose that $E(\mathbb{F}_7)$ is isomorphic to $\mathbb{Z}/4\mathbb{Z}\oplus \mathbb{Z}/2\mathbb{Z}$.
It follows from Theorem \ref{family2} that $E$ is isomorphic to
$y^2=(x+\lambda^2)(x+1)x$
with $\lambda \in \mathbb{F}_7 \setminus \{0,1,-1\}$. This implies that $\lambda=\pm 2$ or $\pm 3$, and therefore $\lambda^2=4$ or $2$, i.e.,  $E$ is isomorphic to
one of the two elliptic curves
$$\mathcal{E}_{1,3}:y^2=(x+2)(x+1)x, \ \mathcal{E}_{1,2}: y^2=(x+4)(x+1)x.$$
Since $ 1/4=2$ in $\mathbb{F}_7$, the elliptic curve  $\mathcal{E}_{1,3}$
coincides with $\mathcal{E}_{1,2}(2)$; in particular, $\mathcal{E}_{1,2}$
 and $\mathcal{E}_{1,3}$ are isomorphic.

Now suppose that $E=\mathcal{E}_{1,2}$.
We need to prove that
$E(\mathbb{F}_7)$ is isomorphic to $\mathbb{Z}/4\mathbb{Z}\oplus \mathbb{Z}/2\mathbb{Z}$.  By Theorem \ref{family2}, $E(\mathbb{F}_7)$ contains a subgroup isomorphic to $\mathbb{Z}/4\mathbb{Z}\oplus \mathbb{Z}/2\mathbb{Z}$; in particular, $8$ divides $|E(\mathbb{F}_7)|$. In order to finish the proof, it suffices to check that $|E(\mathbb{F}_7)|<16$, but this inequality follows from the Hasse bound \eqref{HasseB}
$$|E(\mathbb{F}_7)|\le 7+2\sqrt{7}+1<14.$$
\end{proof}

\begin{thm}
\label{full4}
Suppose that $K$ contains $\mathbf{i}=\sqrt{-1}$. Let $a,b$ be nonzero elements of $K$ such that
$a \ne \pm b, \  a \ne \pm \mathbf{i}b$.
Let us consider the elliptic curve
$$E_{a,b}: y^2=(x-\alpha_1)(x-\alpha_2)(x-\alpha_3)$$
 over $K$ with
$\alpha_1=(a^2-b^2)^2, \ \alpha_2=(a^2+b^2)^2,\  \alpha_3=0$.
Then all points of order $2$ on $E$ are divisible by $2$ in $E(K)$, i.e., $E(K)$ contains all twelve points of order $4$. In particular, $E_{a,b}(K)$ contains a subgroup isomorphic to $\mathbb{Z}/4\mathbb{Z}\oplus \mathbb{Z}/4\mathbb{Z}$.
\end{thm}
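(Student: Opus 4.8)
The plan is to deduce the divisibility of all three $2$-torsion points directly from Theorem \ref{th0} and then read off the full $4$-torsion by a short group-theoretic argument. Since $\alpha_1,\alpha_2,\alpha_3\in K$, all of $E[2]$ is rational, and by Theorem \ref{th0} a point of order $2$, say $W_\ell=(\alpha_\ell,0)$, is divisible by $2$ in $E(K)$ exactly when the two nonzero differences $\alpha_\ell-\alpha_m$ $(m\ne\ell)$ are squares in $K$ (the third difference $\alpha_\ell-\alpha_\ell=0$ is automatically a square). So I would first check, using $a,b\ne 0$ together with $a\ne\pm b$ and $a\ne\pm\mathbf{i}b$, that $\alpha_1,\alpha_2,\alpha_3$ are pairwise distinct, so that $E_{a,b}$ is genuinely of the form \eqref{E2}.

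The core of the argument is three elementary difference computations, which work precisely because $\mathbf{i}\in K$. For $W_3=(0,0)$ one has $\alpha_3-\alpha_1=-(a^2-b^2)^2=(\mathbf{i}(a^2-b^2))^2$ and $\alpha_3-\alpha_2=-(a^2+b^2)^2=(\mathbf{i}(a^2+b^2))^2$. For $W_1$ one computes $\alpha_1-\alpha_2=(a^2-b^2)^2-(a^2+b^2)^2=-4a^2b^2=(2\mathbf{i}ab)^2$, while $\alpha_1-\alpha_3=(a^2-b^2)^2$ is a square by construction; symmetrically, for $W_2$ one has $\alpha_2-\alpha_1=4a^2b^2=(2ab)^2$ and $\alpha_2-\alpha_3=(a^2+b^2)^2$. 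In every case both relevant differences are squares in $K$, so Theorem \ref{th0} yields that $W_1,W_2,W_3$ are each divisible by $2$ in $E(K)$. I expect this bookkeeping to be essentially the entire content: the role of $\mathbf{i}$ is exactly to convert the three negative quantities $-(a^2\pm b^2)^2$ and $-4a^2b^2$ into squares, and without it the construction collapses.

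Finally I would promote this to the claim about $E[4]\cong(\mathbb{Z}/4\mathbb{Z})^2$. Multiplication by $2$ maps $E[4]$ onto $E[2]$ with kernel $E[2]$, so the four points lying above a fixed $W_\ell$ form a single coset $Q_\ell+E[2]$, where $Q_\ell\in E(K)$ is any chosen half of $W_\ell$. Since all of $E[2]$ already lies in $E(K)$, this whole coset is contained in $E(K)$; letting $\ell$ range over $1,2,3$ exhausts the twelve points of order $4$, so $E[4]\subseteq E(K)$ and hence $E_{a,b}(K)$ contains a subgroup isomorphic to $\mathbb{Z}/4\mathbb{Z}\oplus\mathbb{Z}/4\mathbb{Z}$. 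There is no genuine obstacle beyond the difference computations; the only point demanding care is the distinctness check at the start, which is exactly where the exclusions on $(a,b)$, together with $a,b\ne 0$, are consumed.
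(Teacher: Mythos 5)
Your proposal is correct and follows essentially the same route as the paper: verify that all the differences $\alpha_i-\alpha_j$ are squares in $K$ (using $\mathbf{i}\in K$ to handle the negative ones) and invoke Theorem \ref{th0} for each of $W_1,W_2,W_3$. The only additions are the explicit distinctness check and the coset argument identifying the twelve order-$4$ points, both of which the paper treats as immediate.
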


\begin{proof}
Clearly,  all $\alpha_i$ and $-\alpha_j$ are squares in $K$. In addition,
$$\alpha_2-\alpha_1=(a^2+b^2)^2-(a^2-b^2)^2=(2ab)^2, \
\alpha_1-\alpha_2= (2\mathbf{i}ab)^2. $$
This implies that all $\alpha_i-\alpha_j$ are squares in $K$.  It follows from Theorem \ref{th0} that all
points $W_i=(\alpha_i,0)$  of order $2$ are divisible by $2$ in $E(K)$, and therefore $E(K)$ contains all twelve ($3\times 4$) points of order $4$.
\end{proof}

Keeping the notation and assumptions of Theorem \ref{full4}, we describe explicitly all twelve points of order 4, using formula (\ref{halfP}).

\begin{enumerate}
\item
Dividing the point
$W_2=(\alpha_2,0)=\left((a^2+b^2)^2,0\right)$ by 2, we have $r_2=0$ and get  four choices
$r_1= \pm 2ab, \ r_3=\pm (a^2+b^2)$.
This gives us four points $Q$ with $2Q=W_2$, namely,  two points
$$\left((a^2+b^2)^2+2ab(a^2+b^2),\ \pm (a^2+b^2+2ab)2ab(a^2+b^2)\right)$$
$$=\left((a^2+b^2)(a+b)^2,\ \pm 2ab(a^2+b^2)(a+b)^2\right)$$
and  two points
$\left((a^2+b^2)(a-b)^2,\ \pm  2ab(a^2+b^2)(a-b)^2\right)$.
\item
Dividing the   point  $W_3=(\alpha_3,0)=(0,0)$ by $2$,  we have $r_3=0$ and get  four choices
$r_1=\pm \mathbf{i} (a^2-b^2), \ r_2= \pm \mathbf{i} (a^2+b^2)$.
This gives us four points $Q$ with $2Q=W_3$, namely,  two points
$$\left((a^2-b^2)(a^2+b^2),\ \pm (\mathbf{i}( (a^2-b^2)+\mathbf{i} (a^2+b^2))(a^2-b^2)(a^2+b^2)\right)$$
$$=\left(a^4-b^4,\ \pm 2\mathbf{i}a^2(a^4-b^4)\right)$$
and  two points
$\left(b^4-a^4,\ \pm 2\mathbf{i}b^2(b^4-a^4)\right)$.
\item
Dividing the  point  $W_1=(\alpha_1,0)=\left((a^2-b^2)^2,0\right)$ by $2$,  we have $r_1=0$ and get  four choices
$r_2=\pm 2\mathbf{i}ab, \ r_3=\pm (a^2-b^2)$.
This gives us four points $Q$ with $2Q=W_3$, namely,  two points
$$\left((a^2-b^2)^2+2\mathbf{i}ab(a^2-b^2),\ \pm (2\mathbf{i}ab+(a^2-b^2))2\mathbf{i}ab(a^2-b^2)\right) $$
$$=\left((a^2-b^2)(a+\mathbf{i}b)^2,\ \pm 2\mathbf{i}ab(a^2-b^2)(a+\mathbf{i}b)^2\right)$$
and  two points
$\left((a^2-b^2)(a-\mathbf{i}b)^2, \ \pm 2\mathbf{i}ab(a^2-b^2)(a-\mathbf{i}b)^2\right)$.
\end{enumerate}

\begin{rem}
\label{rem44}
Let $\lambda$ be an element of $K \setminus \{0, \pm 1, \pm \sqrt{-1}\}$. We write $\mathcal{E}_{2,\lambda}$ for the elliptic curve
$$\mathcal{E}_{2,\lambda}: y^2= \left(x+\frac{(\lambda^2-1)^2}{(\lambda^2+1)^2}\right)(x+1)x $$
over $K$.  The elliptic curves $\mathcal{E}_{2,\lambda}$ and $E_{a,b}$ are isomorphic if $a=\lambda b$. Indeed,  one has only to put $\kappa=a^2+b^2$ and notice that
$E_{a,b}(\kappa)=\mathcal{E}_{2,\lambda}$.
It follows from Theorem \ref{full4} that $\mathcal{E}_{2,\lambda}(K)$ contains a subgroup isomorphic to  $\mathbb{Z}/4\mathbb{Z}\oplus \mathbb{Z}/4\mathbb{Z}$.

There is another family of elliptic curves with this property, namely,
$$y^2=x(x-1)\left(x-\frac{(u+u^{-1})^2}{4}\right) $$
(\cite{Shioda}, \cite[pp. 451--453]{Silver};  see also Remark  \ref{equivfam}).
\end{rem}

\begin{thm}
\label{family4}
Let $E$ be an elliptic curve over $K$.
Then $E(K)$ contains a subgroup isomorphic to  $\mathbb{Z}/4\mathbb{Z}\oplus \mathbb{Z}/4\mathbb{Z}$
if and only if $K$ contains $\sqrt{-1}$ and
there exists
\newline
  $\lambda \in K
\setminus \{0,\pm 1, \pm \sqrt{-1} \}$ such that $E$ is isomorphic to $\mathcal{E}_{2,\lambda}$.
\end{thm}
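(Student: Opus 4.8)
The plan is to treat the two implications separately, reserving essentially all the work for the forward direction. The ``if'' direction is already contained in Remark \ref{rem44}: once $\sqrt{-1}\in K$ is granted, $\mathcal{E}_{2,\lambda}(K)$ contains a copy of $\mathbb{Z}/4\mathbb{Z}\oplus\mathbb{Z}/4\mathbb{Z}$, and this subgroup is inherited by any curve $K$-isomorphic to $\mathcal{E}_{2,\lambda}$. For the ``only if'' direction I would start by observing that a subgroup $H\cong\mathbb{Z}/4\mathbb{Z}\oplus\mathbb{Z}/4\mathbb{Z}$ of $E(K)$ has order $16$ and is annihilated by $4$, hence coincides with $E[4]$; in particular $E[4]\subset E(K)$, all three points \eqref{W2} are rational, and $E$ has the form \eqref{E2}. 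Since in $\mathbb{Z}/4\mathbb{Z}\oplus\mathbb{Z}/4\mathbb{Z}$ every element of order $2$ is twice an element of order $4$, each $W_i$ is divisible by $2$ in $E(K)$, so Theorem \ref{th0} (applied to $P=W_i$ for all three $i$) shows that \emph{all six} differences $\alpha_i-\alpha_j$ with $i\ne j$ are squares in $K$.

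Two consequences follow. First, writing $\alpha_1-\alpha_2=s^2$ and $\alpha_2-\alpha_1=t^2$ with $s,t\in K^{*}$ and using $\alpha_2-\alpha_1=-(\alpha_1-\alpha_2)$ gives $(s/t)^2=-1$, so $\sqrt{-1}\in K$ (this avoids any appeal to the Weil pairing). Second, I would normalize: after translating $x$ I may assume $\alpha_3=0$, whence $\alpha_1=\alpha_1-\alpha_3$ and $\alpha_2=\alpha_2-\alpha_3$ are nonzero squares, say $\alpha_1=p^2$ and $\alpha_2=q^2$, while $\alpha_1\ne\alpha_2$ forces $p\ne\pm q$, i.e.\ $p+q\ne0$ and $q-p\ne0$. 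The goal is now to recognize this curve, up to the isomorphism $E\cong E(\kappa)$ of Section \ref{torsion}, as one of the curves $E_{a,b}$ of Theorem \ref{full4}.

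The step I expect to be the real obstacle is the choice of the scaling factor $\kappa$. Matching $E_{a,b}$ requires $\alpha_1=(a^2-b^2)^2$ and $\alpha_2=(a^2+b^2)^2$, i.e.\ $a^2=(p+q)/2$ and $b^2=(q-p)/2$, and these two quantities need \emph{not} be squares individually (nor does flipping the signs of $p,q$ repair this). The saving observation is that their product is $(q^2-p^2)/4=(\alpha_2-\alpha_1)/4$, which \emph{is} a square, so $(p+q)/2$ and $(q-p)/2$ lie in the same class of $K^{*}/(K^{*})^2$; hence the single rescaling $\kappa=(p+q)/2$ makes both of them squares at once. Concretely, $E(\kappa)$ then equals $E_{a,b}$ with $a=1$ and $b^2=(q-p)/(q+p)=w^2/(q+p)^2$, where $w^2=\alpha_2-\alpha_1$. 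Finally I would invoke Remark \ref{rem44} to pass from $E_{1,b}$ to the isomorphic curve $\mathcal{E}_{2,\lambda}$ with $\lambda=1/b=(p+q)/w$, and check that the distinctness of $\alpha_1,\alpha_2,\alpha_3$ is exactly what forces $\lambda\notin\{0,\pm1,\pm\sqrt{-1}\}$, finishing the proof.
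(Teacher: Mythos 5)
Your argument is correct, but it takes a genuinely different route from the paper's. The paper first invokes Theorem \ref{family2} to put $E$ in the form $\mathcal{E}_{1,\delta}$, then applies Theorem \ref{th0} to the point of order $4$ over $W_1=(-\delta^2,0)$ to conclude that $1-\delta^2$ is a square $\gamma^2$, and finally produces $\lambda$ from the rational parametrization of the unit circle $\delta^2+\gamma^2=1$. You instead bypass both $\mathcal{E}_{1,\delta}$ and the circle: you identify the subgroup with $E[4]$, deduce that all six differences $\alpha_i-\alpha_j$ are squares (which also yields $\sqrt{-1}\in K$ without the Weil pairing, just as the paper's ``$\pm1$ are squares'' does), normalize $\alpha_3=0$, $\alpha_1=p^2$, $\alpha_2=q^2$, and then match the curve against the family $E_{a,b}$ of Theorem \ref{full4}. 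The one genuinely new ingredient is your square-class observation: $(p+q)/2$ and $(q-p)/2$ need not be squares, but their product is $(\alpha_2-\alpha_1)/4\in (K^{*})^2$, so the single rescaling $\kappa=(p+q)/2$ of Section \ref{torsion} makes both squares simultaneously, giving $a=1$, $b=w/(p+q)$ with $w^2=\alpha_2-\alpha_1$, and hence $\lambda=1/b$ via Remark \ref{rem44}; your verification that $\alpha_1,\alpha_2,\alpha_3$ distinct forces $\lambda\notin\{0,\pm1,\pm\sqrt{-1}\}$ checks out ($b\ne0$ from $p\ne q$, $b^2\ne1$ from $p\ne0$, $b^2\ne-1$ from $q\ne0$). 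What the paper's route buys is brevity and reuse of the already-established versal family for $\mathbb{Z}/4\mathbb{Z}\oplus\mathbb{Z}/2\mathbb{Z}$; what yours buys is a direct link to the Pythagorean family $E_{a,b}$ and an explicit formula for $\lambda$ in terms of the square roots of the $\alpha_i-\alpha_j$, at the cost of the extra square-class computation.
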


\begin{proof}
Recall (Remark \ref{rem44}) that
$\mathcal{E}_{2,\lambda}(K)$ contains a subgroup isomorphic to
$\mathbb{Z}/4\mathbb{Z}\oplus \mathbb{Z}/4\mathbb{Z}$.

Conversely, suppose that $E$ is an elliptic curve over $K$ and $E(K)$ contains a subgroup isomorphic to
$\mathbb{Z}/4\mathbb{Z}\oplus \mathbb{Z}/4\mathbb{Z}$.
By Theorem \ref{family2}, there is $\delta \in K \setminus \{0,\pm 1\}$ such that $E$ is isomorphic to
$$\mathcal{E}_{1,\delta}:y^2=(x+\delta^2)(x+1)x.$$
Hence we may assume that
$\alpha_1=-\delta^2, \alpha_2=-1, \alpha_3=0$.
It follows from Theorem \ref{th0}  that all
$\pm 1, \pm (\delta^2-1)$
are squares in $K$. (In particular, $\mathbf{i}=\sqrt{-1}$ lies in $K$.) So, there is $\gamma \in K$ with $\gamma^2=1-\delta^2$. Clearly, $\gamma \ne 0, \pm 1$. We have
$$\delta^2+\gamma^2=1.$$
The well-known parametrization of the ``unit circle"  (that goes back to Euler) tells us that there exists $\lambda  \in K$ such that $\lambda^2+1 \ne 0$ and
$$\delta=\frac{\lambda^2-1}{\lambda^2+1}, \ \gamma= \frac{2\lambda}{\lambda^2+1}.$$
Now one has only to plug in  the formula for $\delta$ into the equation of
$\mathcal{E}_{1,\delta}$ and get $\mathcal{E}_{2,\lambda}$.
\end{proof}
\begin{rem}\label{equivfam}
Using a different parametrization of the unit circle in the proof of Theorem \ref{family4}, we obtain the family of elliptic curves
$$E: y^2= \left(x+\frac{(2\lambda)^2}{(\lambda^2+1)^2}\right)(x+1)x$$
with the same property as the family  $\mathcal{E}_{2,\lambda}$. Notice that, for each $\lambda\in K\setminus\{0,\pm1\}$, the elliptic curve $E$ is isomorphic to the elliptic curve $$y^2=x(x-1)\left(x- (u+u^{-1})^2/4 \right)$$ mentioned in Remark \ref{rem44}. Indeed, the latter   differs from $E(\kappa)$, where $ \kappa=2\lambda\sqrt{-1}/(\lambda^2+1)$, only with the change of the parameter $\lambda$ by $u$.
\end{rem}
\begin{cor}
 Let $E$ be an elliptic curve over $\mathbb{F}_q$, where $q=9,13,17$.
The group $E(\mathbb{F}_q)$ is isomorphic to $\mathbb{Z}/4\mathbb{Z}\oplus \mathbb{Z}/4\mathbb{Z}$ if and only if
$E$ is isomorphic to one of    elliptic curves
$\mathcal{E}_{2,\lambda}$.
If $q=9$, then $E(\mathbb{F}_q)$ is isomorphic to $\mathbb{Z}/4\mathbb{Z}\oplus \mathbb{Z}/4\mathbb{Z}$ if and only if
$E$ is isomorphic to $y^2=x^3-x$.
\end{cor}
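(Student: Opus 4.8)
The plan is to deduce the first equivalence directly from Theorem \ref{family4} and the Hasse bound \eqref{HasseB}, and then to obtain the $q=9$ refinement by enumerating the admissible parameters. First I would note that $\sqrt{-1}\in\mathbb{F}_q$ for each $q\in\{9,13,17\}$: since $13\equiv17\equiv1\pmod4$, $-1$ is a square in $\mathbb{F}_{13}$ and $\mathbb{F}_{17}$, while $\mathbb{F}_9=\mathbb{F}_3(\mathbf{i})$ contains $\mathbf{i}=\sqrt{-1}$ by construction. This makes Theorem \ref{family4} applicable over each of these fields.

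For the first equivalence I would argue both directions. If $E(\mathbb{F}_q)\cong\mathbb{Z}/4\mathbb{Z}\oplus\mathbb{Z}/4\mathbb{Z}$, then $E(\mathbb{F}_q)$ contains a subgroup isomorphic to $\mathbb{Z}/4\mathbb{Z}\oplus\mathbb{Z}/4\mathbb{Z}$, so Theorem \ref{family4} yields some $\lambda\in\mathbb{F}_q\setminus\{0,\pm1,\pm\sqrt{-1}\}$ with $E\cong\mathcal{E}_{2,\lambda}$. Conversely, if $E\cong\mathcal{E}_{2,\lambda}$, then by Theorem \ref{family4} the group $E(\mathbb{F}_q)$ contains a copy of $\mathbb{Z}/4\mathbb{Z}\oplus\mathbb{Z}/4\mathbb{Z}$, whence $16\mid|E(\mathbb{F}_q)|$ and $|E(\mathbb{F}_q)|\ge16$. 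The Hasse bound \eqref{HasseB} gives $|E(\mathbb{F}_q)|\le q+2\sqrt q+1$, which stays below $32$ for $q=9,13,17$ (the extreme case $q=17$ gives less than $27$). A positive multiple of $16$ lying in $[16,32)$ must equal $16$, and a group of order $16$ that contains $\mathbb{Z}/4\mathbb{Z}\oplus\mathbb{Z}/4\mathbb{Z}$ coincides with it; hence $E(\mathbb{F}_q)\cong\mathbb{Z}/4\mathbb{Z}\oplus\mathbb{Z}/4\mathbb{Z}$.

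For the $q=9$ refinement I would identify all the curves $\mathcal{E}_{2,\lambda}$ explicitly. The admissible set $\mathbb{F}_9\setminus\{0,\pm1,\pm\mathbf{i}\}$ consists of exactly the four elements $\pm1\pm\mathbf{i}$; since $\mathcal{E}_{2,\lambda}$ depends only on $\lambda^2$, and since $(1+\mathbf{i})^2=-\mathbf{i}$ and $(1-\mathbf{i})^2=\mathbf{i}$ in characteristic $3$, these four parameters produce only the two values $\lambda^2=\pm\mathbf{i}$. Substituting each into the defining equation and checking that $(\lambda^2-1)^2/(\lambda^2+1)^2=-1$ in both cases, I would conclude that every such $\mathcal{E}_{2,\lambda}$ is literally the curve $y^2=(x-1)(x+1)x=x^3-x$. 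Combining this with the first equivalence, and observing that $y^2=x^3-x$ is itself one of the $\mathcal{E}_{2,\lambda}$, gives the stated characterization for $q=9$.

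The whole argument is mechanical once Theorem \ref{family4} is available; the one step demanding genuine care is the final paragraph, where the $\mathbb{F}_9$-arithmetic must be carried out correctly so as to see that the \emph{a priori} four curves $\mathcal{E}_{2,\lambda}$ all collapse onto the single curve $y^2=x^3-x$. I would recheck both the count of admissible $\lambda$ and the squarings $(1\pm\mathbf{i})^2=\mp\mathbf{i}$ before trusting this collapse.
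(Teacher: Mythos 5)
Your proposal is correct and follows essentially the same route as the paper: Theorem \ref{family4} gives the parametrization, the Hasse bound forces $|E(\mathbb{F}_q)|=16$ so that the subgroup $\mathbb{Z}/4\mathbb{Z}\oplus\mathbb{Z}/4\mathbb{Z}$ is the whole group, and for $q=9$ the four admissible $\lambda=\pm1\pm\mathbf{i}$ all give $(\lambda^2-1)^2/(\lambda^2+1)^2=-1$, collapsing onto $y^2=x^3-x$. The $\mathbb{F}_9$-arithmetic you flag for rechecking is exactly the computation in the paper and it checks out.
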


\begin{proof}
First, $\mathbb{F}_q$ contains $\sqrt{-1}$.
Suppose that $E(\mathbb{F}_q)$ is isomorphic to $\mathbb{Z}/4\mathbb{Z}\oplus \mathbb{Z}/4\mathbb{Z}$.
It follows from Theorem \ref{family4} that $E$ is isomorphic to
$\mathcal{E}_{2,\lambda}$.

Conversely, suppose that $E$ is isomorphic to  one of these curves. We need to prove that
$E(\mathbb{F}_q)$ is isomorphic to $\mathbb{Z}/4\mathbb{Z}\oplus \mathbb{Z}/4\mathbb{Z}$.  By Theorem \ref{family4}, $E(\mathbb{F}_q)$ contains a subgroup isomorphic to $\mathbb{Z}/4\mathbb{Z}\oplus \mathbb{Z}/4\mathbb{Z}$; in particular, $16$ divides $|E(\mathbb{F}_q)|$. In order to finish the proof, it suffices to check that $|E(\mathbb{F}_q)|<32$, but this inequality follows from the Hasse bound \eqref{HasseB}
$$|E(\mathbb{F}_q)|\le q+2\sqrt{q}+1\le 17+2\sqrt{17}+1<27.$$

Now assume that $q=9$. Then $\lambda$
is one of four
 $\pm (1\pm \mathbf{i})$.
  For all such $\lambda$ we have
$$\lambda^2= \pm 2\mathbf{i}=\mp \mathbf{i},  \
\frac{(\lambda^2-1)^2}{(\lambda^2+1)^2}=\frac{(1\mp \mathbf{i})^2}{(-1\mp \mathbf{i})^2}=\frac{\mp 2\mathbf{i}}{\pm 2\mathbf{i}}=-1.$$
Therefore the equation for $\mathcal{E}_{2,\lambda}$ is
$$y^2=(x-1)(x+1)x=x^3-x.$$

\end{proof}

\begin{cor}
 Let $E$ be an elliptic curve over $\mathbb{F}_{29}$.
The group $E(\mathbb{F}_{29})$ is isomorphic to $\mathbb{Z}/8\mathbb{Z}\oplus \mathbb{Z}/4\mathbb{Z}$ if and only if
$E$ is isomorphic to one of   elliptic curves
$\mathcal{E}_{2,\lambda}$.
\end{cor}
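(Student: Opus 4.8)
The plan is to reduce to Theorem~\ref{family4} together with the Hasse bound, and then to pin down the group structure using the fact that $E(\mathbb{F}_{29})$ is generated by at most two elements. First I would record that $29\equiv 1\pmod 4$, so that $\sqrt{-1}\in\mathbb{F}_{29}$ and the family $\mathcal{E}_{2,\lambda}$ is indeed available over $\mathbb{F}_{29}$.

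For the forward implication, I would suppose $E(\mathbb{F}_{29})\cong\mathbb{Z}/8\mathbb{Z}\oplus\mathbb{Z}/4\mathbb{Z}$. The subgroup of elements whose order divides $4$ is then isomorphic to $\mathbb{Z}/4\mathbb{Z}\oplus\mathbb{Z}/4\mathbb{Z}$, so $E(\mathbb{F}_{29})$ contains a subgroup isomorphic to $\mathbb{Z}/4\mathbb{Z}\oplus\mathbb{Z}/4\mathbb{Z}$. By Theorem~\ref{family4} this forces $E$ to be isomorphic to $\mathcal{E}_{2,\lambda}$ for some $\lambda\in\mathbb{F}_{29}\setminus\{0,\pm 1,\pm\sqrt{-1}\}$.

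For the converse, I would suppose $E\cong\mathcal{E}_{2,\lambda}$. By Theorem~\ref{family4} (or Remark~\ref{rem44}) the group $E(\mathbb{F}_{29})$ contains a copy of $\mathbb{Z}/4\mathbb{Z}\oplus\mathbb{Z}/4\mathbb{Z}$, so $16$ divides $|E(\mathbb{F}_{29})|$. The Hasse bound \eqref{HasseB} gives $20\le |E(\mathbb{F}_{29})|\le 40$, and the only multiple of $16$ in this range is $32$; hence $|E(\mathbb{F}_{29})|=32$. To identify the structure, I would invoke the standard fact that the group of points of an elliptic curve over a finite field is generated by at most two elements (because for every prime $\ell\ne\fchar(\mathbb{F}_q)$ the $\ell$-torsion is $(\mathbb{Z}/\ell\mathbb{Z})^2$), so that $E(\mathbb{F}_{29})\cong\mathbb{Z}/n_1\mathbb{Z}\oplus\mathbb{Z}/n_2\mathbb{Z}$ with $n_1\mid n_2$ and $n_1 n_2=32$. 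Since the $4$-torsion contains $\mathbb{Z}/4\mathbb{Z}\oplus\mathbb{Z}/4\mathbb{Z}$ we must have $4\mid n_1$; combined with $n_1\mid n_2$ and $n_1 n_2=32$ (which forces $n_1^2\mid 32$, hence $n_1\mid 4$), this leaves only $n_1=4,\ n_2=8$, so $E(\mathbb{F}_{29})\cong\mathbb{Z}/8\mathbb{Z}\oplus\mathbb{Z}/4\mathbb{Z}$.

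The main obstacle is precisely this last structural step. Unlike the earlier corollaries for $q=9,13,17$, where the Hasse bound pinned the order down to exactly $16$ so that the containment $\mathbb{Z}/4\mathbb{Z}\oplus\mathbb{Z}/4\mathbb{Z}$ already filled the whole group, here the order $32$ is strictly larger than $16$, and one genuinely has to rule out the remaining rank-at-most-two abelian groups of order $32$, namely $\mathbb{Z}/32\mathbb{Z}$ and $\mathbb{Z}/16\mathbb{Z}\oplus\mathbb{Z}/2\mathbb{Z}$. The rationality of the full $4$-torsion, inherited from $\mathcal{E}_{2,\lambda}$, is exactly what excludes these two and leaves $\mathbb{Z}/8\mathbb{Z}\oplus\mathbb{Z}/4\mathbb{Z}$.
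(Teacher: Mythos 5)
Your proposal is correct and follows essentially the same route as the paper: Theorem \ref{family4} handles both directions, the Hasse bound forces $|E(\mathbb{F}_{29})|=32$, and the fact that the group of points is a product of at most two cyclic groups, combined with the rational full $4$-torsion, pins down the structure as $\mathbb{Z}/8\mathbb{Z}\oplus\mathbb{Z}/4\mathbb{Z}$. Your write-up merely makes the paper's final ``clearly'' step more explicit.
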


\begin{proof}
First, $\mathbb{F}_{29}$ contains $\sqrt{-1}$.
Suppose that $E(\mathbb{F}_{29})$ is isomorphic to $\mathbb{Z}/8\mathbb{Z}\oplus \mathbb{Z}/4\mathbb{Z}$.
Then $E(\mathbb{F}_{29})$ contains a subgroup isomorphic to $\mathbb{Z}/4\mathbb{Z}\oplus \mathbb{Z}/4\mathbb{Z}$.
It follows from Theorem \ref{family4} that $E$ is isomorphic to
$\mathcal{E}_{2,\lambda}$.

Conversely, suppose that $E$ is isomorphic to one of these curves. We need to prove that
$E(\mathbb{F}_{29})$ is isomorphic to $\mathbb{Z}/8\mathbb{Z}\oplus \mathbb{Z}/4\mathbb{Z}$.  By Theorem \ref{family4}, $E(\mathbb{F}_{29})$ contains a subgroup isomorphic to $\mathbb{Z}/4\mathbb{Z}\oplus \mathbb{Z}/4\mathbb{Z}$; in particular, $16$ divides $|E(\mathbb{F}_{29})|$. The  Hasse bound \eqref{HasseB} tells us that
$$29+1-2\sqrt{29} \le |E(\mathbb{F}_q)| \le 29+1+2\sqrt{29}$$ and therefore
$$19<  |E(\mathbb{F}_{29})|< 41.$$
It follows that $|E(\mathbb{F}_{29})|=32$; in particular,  $E(\mathbb{F}_{29})$ is a finite $2$-group. Clearly, $E(\mathbb{F}_{29})$ is isomorphic to a product of two cyclic $2$-groups, each of which has order divisible by $4$.  It follows that $E(\mathbb{F}_{29})$ is isomorphic to $\mathbb{Z}/8\mathbb{Z}\oplus \mathbb{Z}/4\mathbb{Z}$.
\end{proof}
\begin{thm}
\label{Qi}
Let $K=\Q(\sqrt{-1})$ and
$E$ be an elliptic curve over $\Q(\sqrt{-1})$.
Then the torsion subgroup $E(\Q(\sqrt{-1})_t$ of  $E(\Q(\sqrt{-1}))$ is isomorphic to  $\mathbb{Z}/4\mathbb{Z}\oplus \mathbb{Z}/4\mathbb{Z}$
if and only if
there exists
  $\lambda \in K
\setminus \{0,\pm 1, \pm \sqrt{-1} \}$ such that $E$ is isomorphic to $\mathcal{E}_{2,\lambda}$.
\end{thm}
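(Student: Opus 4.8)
The plan is to reduce everything to two results already established earlier: the explicit classification in Theorem \ref{family4} of those $E$ whose group of $K$-points contains a copy of $\mathbb{Z}/4\mathbb{Z}\oplus\mathbb{Z}/4\mathbb{Z}$, and the arithmetic constraint of Theorem \ref{Kquad}(2), which says that over a quadratic field a curve carrying such a subgroup can have no strictly larger torsion. Since the statement is an ``if and only if,'' I would split it into the two implications and treat them separately; the forward implication is formal, while the converse is where the deep input enters.

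First I would handle the ``only if'' direction, which is essentially bookkeeping. Assume $E(K)_t \cong \mathbb{Z}/4\mathbb{Z}\oplus\mathbb{Z}/4\mathbb{Z}$. Then in particular $E(K)$ contains a subgroup isomorphic to $\mathbb{Z}/4\mathbb{Z}\oplus\mathbb{Z}/4\mathbb{Z}$ (namely the torsion subgroup itself). Since $K=\Q(\sqrt{-1})$ visibly contains $\sqrt{-1}$, Theorem \ref{family4} applies verbatim and produces $\lambda \in K \setminus \{0,\pm 1,\pm\sqrt{-1}\}$ with $E \cong \mathcal{E}_{2,\lambda}$. No deep ingredient is needed here.

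For the ``if'' direction I would argue as follows. Suppose $E \cong \mathcal{E}_{2,\lambda}$ for some $\lambda \in K \setminus \{0,\pm 1,\pm\sqrt{-1}\}$. By Remark \ref{rem44} (equivalently, by the easy half of Theorem \ref{family4}), $E(K)$ contains a subgroup isomorphic to $\mathbb{Z}/4\mathbb{Z}\oplus\mathbb{Z}/4\mathbb{Z}$. The key observation is that $K=\Q(\sqrt{-1})$ is a quadratic field and $\mathcal{E}_{2,\lambda}$ is written in the Weierstrass form \eqref{E2}, so all three points of order $2$ are automatically $K$-rational. Hence the hypotheses of Theorem \ref{Kquad} are met, and its part (2) upgrades the mere containment of $\mathbb{Z}/4\mathbb{Z}\oplus\mathbb{Z}/4\mathbb{Z}$ to the conclusion that the full torsion subgroup $E(K)_t$ is itself isomorphic to $\mathbb{Z}/4\mathbb{Z}\oplus\mathbb{Z}/4\mathbb{Z}$.

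The only nontrivial step, and hence the ``main obstacle,'' is entirely outsourced to Theorem \ref{Kquad}(2): one must know that $\mathbb{Z}/4\mathbb{Z}\oplus\mathbb{Z}/4\mathbb{Z}$ cannot sit as a proper subgroup of any admissible torsion group over a quadratic field. This rests on the deep classification of quadratic-field torsion of Kamienny and Kenku--Momose; without it, the containment statement of Theorem \ref{family4} would not by itself pin the torsion down exactly, and the converse implication would fail to follow. Everything else in the argument is formal manipulation of the cited results.
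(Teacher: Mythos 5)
Your proof is correct and follows essentially the same route as the paper: Theorem \ref{Kquad} upgrades containment of $\mathbb{Z}/4\mathbb{Z}\oplus \mathbb{Z}/4\mathbb{Z}$ to equality with the full torsion subgroup, and Theorem \ref{family4} supplies the classification by the curves $\mathcal{E}_{2,\lambda}$. (The paper's own proof cites Theorem \ref{family2} at the final step, which is evidently a misprint for Theorem \ref{family4}; you cite the correct result.)
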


\begin{proof}
By Theorem \ref{Kquad},
if $E(\Q(\sqrt{-1}))$ contains a subgroup  isomorphic to  $\mathbb{Z}/4\mathbb{Z}\oplus \mathbb{Z}/4\mathbb{Z}$ then
$E(\Q(\sqrt{-1})_t$  is isomorphic to  $\mathbb{Z}/4\mathbb{Z}\oplus \mathbb{Z}/4\mathbb{Z}$. Now the desired result follows from Theorem \ref{family2}.
\end{proof}

\section{Points of order 8}
\label{l8}
Let us return to the curve $\mathcal{E}_{1,\lambda}$ and consider ${Q}\in \mathcal{E}_{1,\lambda}(K)$ with $2{Q}=W_3$.
 Let us try to divide ${Q}$ by $2$ in $E(K)$.
By Remark \ref{W3L},  $x({Q})=\pm \lambda$.
First, we assume that  $x( {Q})= \lambda$  (such a ${Q}$ does exist).

\begin{lem}
\label{divLambda}
Let $Q$ be a point of  $\mathcal{E}_{1,\lambda}(K)$ with $x( {Q})= \lambda$. Then $Q$ is divisible by $2$ in  $\mathcal{E}_{1,\lambda}(K)$ if and only if there exists
$c \in K \setminus \{ 0, \pm1, \pm 1\pm \sqrt{2},  \pm \sqrt{-1}\}$
such that
$$\lambda=\left[\frac{c-\frac{1}{c}}{2}\right]^2.$$
\end{lem}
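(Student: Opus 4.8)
The plan is to reduce the divisibility of $Q$ by $2$ to an explicit condition on $\lambda$ via Theorem \ref{th0}, and then recognize that condition as the rational parametrization of a conic. First I would apply Theorem \ref{th0} to the $K$-point $Q=(\lambda,y(Q))$ on $\mathcal{E}_{1,\lambda}$, where $\alpha_1=-\lambda^2$, $\alpha_2=-1$, $\alpha_3=0$. The three relevant differences are
$$x(Q)-\alpha_1=\lambda+\lambda^2=\lambda(\lambda+1),\quad x(Q)-\alpha_2=\lambda+1,\quad x(Q)-\alpha_3=\lambda.$$
Hence $Q$ is divisible by $2$ in $\mathcal{E}_{1,\lambda}(K)$ if and only if $\lambda$, $\lambda+1$, and $\lambda(\lambda+1)$ are all squares in $K$. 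Since a product of two squares is a square, this is equivalent to the cleaner condition that both $\lambda$ and $\lambda+1$ be squares in $K$.

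Next I would translate ``$\lambda$ and $\lambda+1$ are squares'' into the asserted parametrization. Writing $\lambda=s^2$ and $\lambda+1=t^2$ with $s,t\in K$ gives $t^2-s^2=1$. Setting $c:=s+t$, the identity $(t+s)(t-s)=t^2-s^2=1$ forces $c\ne 0$ and $1/c=t-s$, so that
$$\frac{c-1/c}{2}=\frac{(s+t)-(t-s)}{2}=s,\qquad \lambda=s^2=\left[\frac{c-1/c}{2}\right]^2.$$
Conversely, given such a $c$, the computation $\left(\frac{c-1/c}{2}\right)^2=\frac{c^2-2+c^{-2}}{4}$ yields $\lambda=\left(\frac{c-1/c}{2}\right)^2$, while $\lambda+1=\frac{c^2+2+c^{-2}}{4}=\left(\frac{c+1/c}{2}\right)^2$ is manifestly a square; by the first step $Q$ is then divisible by $2$.

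Finally I would pin down the excluded set of $c$. Because $\mathcal{E}_{1,\lambda}$ is defined only for $\lambda\in K\setminus\{0,\pm 1\}$, I must check that the forbidden values of $c$ are exactly those producing $\lambda\in\{0,\pm 1\}$, together with $c=0$. Solving $\frac{c-1/c}{2}=0$ gives $c=\pm 1$ (the case $\lambda=0$); solving $\left(\frac{c-1/c}{2}\right)^2=1$, i.e.\ $c^2\mp 2c-1=0$, gives $c=\pm 1\pm\sqrt{2}$ (the case $\lambda=1$); and solving $\left(\frac{c-1/c}{2}\right)^2=-1$, i.e.\ $(c\mp\mathbf{i})^2=0$, gives $c=\pm\sqrt{-1}$ (the case $\lambda=-1$). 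Thus removing $c\in\{0,\pm 1,\pm 1\pm\sqrt{2},\pm\sqrt{-1}\}$ is precisely what guarantees $\lambda\in K\setminus\{0,\pm 1\}$, matching the statement.

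I expect the genuine content to lie in this last bookkeeping step, namely verifying that each excluded $c$ corresponds to a forbidden $\lambda$ and that no admissible $c$ is discarded, rather than in anything conceptual. The algebraic heart, reducing to $\lambda,\lambda+1\in (K^\times)^2$ and recognizing the parametrization $c=s+t$, $1/c=t-s$ of the conic $t^2-s^2=1$, is short.
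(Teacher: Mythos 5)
Your proposal is correct and follows essentially the same route as the paper: reduce via Theorem \ref{th0} to the condition that $\lambda$ and $\lambda+1$ are both squares, parametrize the resulting conic $t^2-s^2=1$ by $c=s+t$, $1/c=t-s$ (the paper invokes the same parametrization of the hyperbola $u^2-v^2=1$), and check that the excluded values of $c$ are exactly those giving $\lambda\in\{0,\pm 1\}$.
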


\begin{proof}
We have
$$\lambda-\alpha_1=\lambda-(-\lambda^2)=\lambda+\lambda^2, \ \lambda-\alpha_2=\lambda-(-1)=\lambda+1,\ \lambda-\alpha_3=\lambda-0=\lambda.$$
 By Theorem \ref{th0}, $ {Q} \in 2\mathcal{E}_{1,\lambda}(K)$  if and only if all three
 $\lambda+\lambda^2, \lambda+1, \lambda$ are squares in $K$. The latter means that both $\lambda$ and $\lambda+1$ are squares in $K$, i.e.,  there exist $a,b\in K$ such that
$a^2=\lambda+1, \lambda=b^2$.
This implies that
the pair $(a, b)$ is a $K$-point on the hyperbola
$$u^2-v^2=1.$$
Recall that $\lambda \ne 0,\pm 1$.
Using the well-known parametrization
$$u=\frac{t+\frac{1}{t}}{2}, v=\frac{t-\frac{1}{t}}{2}$$
of the hyperbola, we obtain that
 both $\lambda$ and $\lambda+1$ are squares in $K$
if and only if
 there exists a {\sl nonzero} $c \in K$ such that
\begin{equation*}
\label{lambda8plus}
\lambda=\left[\frac{c-\frac{1}{c}}{2}\right]^2.
\end{equation*}
If this is the case, then
$$a=\pm \frac{c+\frac{1}{c}}{2},\  b=\pm \frac{c-\frac{1}{c}}{2}$$
and
$$\lambda+1=\left[\frac{c+\frac{1}{c}}{2}\right]^2.$$
Recall that $\lambda \ne 0, \pm 1$. This means that
$$\frac{c-\frac{1}{c}}{2} \ne 0, \pm 1, \pm \sqrt{-1}, \ \text{ i.e., }$$
\begin{equation*}
c \ne 0, \pm1, \pm 1\pm \sqrt{2},  \pm \sqrt{-1}.
\end{equation*}
\end{proof}

Now let us assume that $x({Q})=-\lambda$ (such a $ {Q}$ does exist).

\begin{lem}
\label{divMLambda}
Let $Q$ be a point of  $\mathcal{E}_{1,\lambda}(K)$ with $x( {Q})=- \lambda$. Then $Q$ is divisible by $2$ in  $\mathcal{E}_{1,\lambda}(K)$ if and only if there exists
$c \in K \setminus \{ 0, \pm1, \pm 1\pm \sqrt{2},  \pm \sqrt{-1}\}$
 such that
$$\lambda=-\left[\frac{c-\frac{1}{c}}{2}\right]^2.$$
\end{lem}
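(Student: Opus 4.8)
The plan is to mimic the proof of Lemma \ref{divLambda} almost verbatim, the only difference being a systematic change of signs. First I would substitute $x(Q)=-\lambda$ into the three differences $x(Q)-\alpha_i$ for the curve $\mathcal{E}_{1,\lambda}$ (where $\alpha_1=-\lambda^2$, $\alpha_2=-1$, $\alpha_3=0$), obtaining
$$-\lambda-\alpha_1=\lambda^2-\lambda=\lambda(\lambda-1),\quad -\lambda-\alpha_2=1-\lambda,\quad -\lambda-\alpha_3=-\lambda.$$
By Theorem \ref{th0}, the point $Q$ is divisible by $2$ in $\mathcal{E}_{1,\lambda}(K)$ if and only if all three of these elements are squares in $K$.

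The key observation, exactly as before, is that the first difference is redundant: since $\lambda(\lambda-1)=(-\lambda)(1-\lambda)$, whenever both $-\lambda$ and $1-\lambda$ are squares their product $\lambda(\lambda-1)$ is automatically a square. Hence the divisibility condition collapses to the requirement that $-\lambda$ and $1-\lambda$ be squares in $K$. Writing $a^2=1-\lambda$ and $b^2=-\lambda$, I note that $a^2-b^2=(1-\lambda)-(-\lambda)=1$, so $(a,b)$ is again a $K$-point on the hyperbola $u^2-v^2=1$, the very same hyperbola that appeared in Lemma \ref{divLambda}.

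From here I would invoke the same Euler parametrization $u=(c+\frac{1}{c})/2$, $v=(c-\frac{1}{c})/2$ with nonzero $c\in K$. Then $b=\pm(c-\frac{1}{c})/2$, so $-\lambda=b^2=\left[\frac{c-\frac{1}{c}}{2}\right]^2$, which gives exactly $\lambda=-\left[\frac{c-\frac{1}{c}}{2}\right]^2$. Finally I would verify that the constraints $\lambda\ne 0,\pm 1$ translate into $(c-\frac{1}{c})/2\ne 0,\pm 1,\pm\sqrt{-1}$, that is, into $c\notin\{0,\pm 1,\pm 1\pm\sqrt{2},\pm\sqrt{-1}\}$, the identical excluded set as in Lemma \ref{divLambda} (the roles of the conditions ``$\lambda\ne 1$'' and ``$\lambda\ne -1$'' simply swap, but the resulting exclusions coincide). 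Since no step requires anything beyond what was already carried out for the case $x(Q)=\lambda$, there is really no serious obstacle here; the only thing to watch is the bookkeeping of signs, in particular confirming that the identity $\lambda(\lambda-1)=(-\lambda)(1-\lambda)$ is what makes the third squareness condition superfluous and that the hyperbola $u^2-v^2=1$ is genuinely unchanged.
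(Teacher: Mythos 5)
Your argument is correct: the computation of the three differences $x(Q)-\alpha_i$ at $x(Q)=-\lambda$, the reduction to ``$-\lambda$ and $1-\lambda$ are squares'' via the identity $\lambda(\lambda-1)=(-\lambda)(1-\lambda)$, the hyperbola $u^2-v^2=1$ with $a^2=1-\lambda$, $b^2=-\lambda$, and the translation of $\lambda\ne 0,\pm 1$ into the excluded set for $c$ (with the roles of $\lambda\ne1$ and $\lambda\ne-1$ swapped) all check out. The paper, however, does not redo any of this: its entire proof is the observation that $\mathcal{E}_{1,-\lambda}=\mathcal{E}_{1,\lambda}$, so one may simply apply Lemma \ref{divLambda} with $-\lambda$ in place of $\lambda$, and the condition $\lambda=[\frac{c-1/c}{2}]^2$ becomes $-\lambda=[\frac{c-1/c}{2}]^2$ verbatim. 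Your direct re-derivation buys a self-contained verification (and implicitly confirms that the symmetry argument is sound, e.g.\ that the excluded set for $c$ really is the same), at the cost of repeating a page of computation that the symmetry renders unnecessary; the paper's one-line reduction is the more economical route and is the one you should prefer once you notice that the family is even in $\lambda$.
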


\begin{proof}
Applying Lemma  \ref{divLambda} to $-\lambda$ (instead of $\lambda$) and the curve
$\mathcal{E}_{1,-\lambda}=\mathcal{E}_{1,\lambda}$, we obtain that
$ {Q}\in 2\mathcal{E}_{1,-\lambda}(K)=2\mathcal{E}_{1,\lambda}(K)$
if and only if there exists
$$c \in K \setminus \{ 0, \pm1, \pm 1\pm \sqrt{2},  \pm \sqrt{-1}\}$$
such that
\begin{equation*}
\label{lambda8minus}
-\lambda=\left[\frac{c-\frac{1}{c}}{2}\right]^2.
\end{equation*}
\end{proof}

Lemmas \ref{divLambda} and \ref{divMLambda}
give us the following statement.

\begin{prop}
\label{div4W3}
The point $W_3=(0,0)$ is divisible by $4$ in $\mathcal{E}_{1,\lambda}(K)$ if and only if
there exists  $c \in K$ such that
 $c \ne  0, \pm 1, \pm 1\pm \sqrt{2},  \pm \sqrt{-1}$ and
$$\lambda=\pm \left[\frac{c-\frac{1}{c}}{2}\right]^2, \
\text{ i.e., } \
\lambda^2= \left[\frac{c-\frac{1}{c}}{2}\right]^4.$$
\end{prop}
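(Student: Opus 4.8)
The plan is to reduce the statement to the two preceding lemmas by observing that divisibility of $W_3$ by $4$ is the same as divisibility by $2$ of a single one of its ``halves''. First I would recall that $W_3 \in 2\mathcal{E}_{1,\lambda}(K)$ (this is exactly what was established in Section \ref{l4}), so that $W_3$ is divisible by $4$ in $\mathcal{E}_{1,\lambda}(K)$ if and only if there exists a point $Q$ with $2Q = W_3$ that is itself divisible by $2$ in $\mathcal{E}_{1,\lambda}(K)$. By Remark \ref{W3L}, every such $Q$ satisfies $x(Q) = \lambda$ or $x(Q) = -\lambda$, and both values of the abscissa actually occur. Since, by Theorem \ref{th0}, divisibility of a point by $2$ depends only on its abscissa (the condition being that all three $x_0 - \alpha_i$ be squares), it suffices to decide divisibility by $2$ separately in the two cases $x(Q) = \lambda$ and $x(Q) = -\lambda$.

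Next I would invoke the two lemmas directly. Lemma \ref{divLambda} says that a point with abscissa $\lambda$ is divisible by $2$ exactly when there is $c \in K \setminus \{0, \pm 1, \pm 1 \pm \sqrt{2}, \pm \sqrt{-1}\}$ with $\lambda = [(c - 1/c)/2]^2$, and Lemma \ref{divMLambda} says that a point with abscissa $-\lambda$ is divisible by $2$ exactly when there is such a $c$ with $\lambda = -[(c-1/c)/2]^2$. Hence $W_3$ is divisible by $4$ if and only if at least one of these two alternatives holds, that is, if and only if there is an admissible $c$ with $\lambda = \pm[(c-1/c)/2]^2$.

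Finally I would observe that the disjunction $\lambda = \pm[(c-1/c)/2]^2$ is equivalent to the single equation $\lambda^2 = [(c-1/c)/2]^4$, since for any $w \in K$ the condition ``$\lambda = \pm w^2$ for some choice of sign'' is the same as $\lambda^2 = w^4$. This produces the criterion as stated. I do not expect a genuine obstacle: the argument is pure bookkeeping of the two lemmas together with the two possible abscissas $\pm\lambda$. The one point that deserves care is that divisibility of $W_3$ by $4$ requires only \emph{one} of its four halves to be divisible by $2$, not all of them; this is precisely why the two lemmas must be combined with an ``or'', and it is this disjunction that yields the sign ambiguity $\pm$ (equivalently, the fourth-power form $\lambda^2 = [(c-1/c)/2]^4$).
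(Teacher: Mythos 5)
Your proposal is correct and follows the paper's own route: the paper derives Proposition \ref{div4W3} directly from Lemmas \ref{divLambda} and \ref{divMLambda}, exactly the disjunction over the two possible abscissas $x(Q)=\pm\lambda$ of the halves of $W_3$ that you spell out. The bookkeeping you supply (that divisibility by $4$ requires only one half to be divisible by $2$, and that $\lambda=\pm w^2$ is equivalent to $\lambda^2=w^4$) is exactly what the paper leaves implicit.
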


\begin{prop}
\label{W3by4}
The following conditions are equivalent.

\begin{itemize}
\item[(i)]
If $Q \in \mathcal{E}_{1,\lambda}(K)$  is any point with $2Q=W_3$, then  $Q$  lies in $2 \mathcal{E}_{1,\lambda}(K)$.
\item[(ii)]
If $R$ is any point of $\mathcal{E}_{1,\lambda}$ with $4R=W_3$, then $R$ lies in $\mathcal{E}_{1,\lambda}(K)$.
\item[(iii)]
 There exist
$c,d \in K\setminus \{ 0, \pm1, \pm 1\pm \sqrt{2},  \pm \sqrt{-1}\}$
such that
$$\lambda= \left[\frac{c-\frac{1}{c}}{2}\right]^2, \ -\lambda= \left[\frac{d-\frac{1}{d}}{2}\right]^2.$$
\end{itemize}
If these equivalent conditions hold, then $K$ contains $\sqrt{-1}$ and $\mathcal{E}_{1,\lambda}(K)$ contains all (twelve) points of order 4.
\end{prop}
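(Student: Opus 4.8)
The plan is to establish the equivalences by treating $(\mathrm{i})\Leftrightarrow(\mathrm{iii})$ and $(\mathrm{i})\Leftrightarrow(\mathrm{ii})$ separately, and then to deduce the concluding assertion from $(\mathrm{iii})$. The starting observation is that the four points $Q\in\mathcal{E}_{1,\lambda}(K)$ with $2Q=W_3$ are exactly those computed in Example \ref{halfW3} (with $r_3=0$, $r_1=\pm\lambda$, $r_2=\pm1$); by Remark \ref{W3L} they fall into two antipodal pairs $\{Q,-Q\}$, one with $x(Q)=\lambda$ and one with $x(Q)=-\lambda$, and all four are already $K$-rational because $\lambda\in K$.

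For $(\mathrm{i})\Leftrightarrow(\mathrm{iii})$ I would use that divisibility by $2$ is invariant under $Q\mapsto -Q$, so within each antipodal pair it suffices to test a single representative. Lemma \ref{divLambda} says precisely when the pair with $x(Q)=\lambda$ lies in $2\mathcal{E}_{1,\lambda}(K)$, namely when $\lambda=\bigl(\tfrac{c-1/c}{2}\bigr)^2$ for some admissible $c$; Lemma \ref{divMLambda} does the same for the pair with $x(Q)=-\lambda$, namely when $-\lambda=\bigl(\tfrac{d-1/d}{2}\bigr)^2$ for some admissible $d$. Condition $(\mathrm{i})$ asks that \emph{all four} such $Q$ be divisible by $2$, which is exactly the conjunction of these two statements, i.e. $(\mathrm{iii})$.

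For $(\mathrm{i})\Leftrightarrow(\mathrm{ii})$ the key input is that all three $2$-torsion points $W_1,W_2,W_3$ are $K$-rational, so any two halves of a fixed point differ by a $K$-rational $2$-torsion point. Assuming $(\mathrm{i})$, take any $R$ with $4R=W_3$; then $Q:=2R$ is one of the four $K$-rational halves of $W_3$, so $Q\in\mathcal{E}_{1,\lambda}(K)$, and $(\mathrm{i})$ yields a $K$-rational $R_0$ with $2R_0=Q$. Since $R-R_0$ is a $2$-torsion point, hence $K$-rational, we get $R=R_0+(R-R_0)\in\mathcal{E}_{1,\lambda}(K)$, which is $(\mathrm{ii})$. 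Conversely, applying $(\mathrm{ii})$ to any half $R$ of a $K$-rational $Q$ with $2Q=W_3$ gives $R\in\mathcal{E}_{1,\lambda}(K)$, so $Q=2R\in 2\mathcal{E}_{1,\lambda}(K)$, which is $(\mathrm{i})$.

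For the final claim, assume $(\mathrm{iii})$. The hyperbola parametrization in the proof of Lemma \ref{divLambda} furnishes not only $\lambda=\bigl(\tfrac{c-1/c}{2}\bigr)^2$ but also $\lambda+1=\bigl(\tfrac{c+1/c}{2}\bigr)^2$; applying the same to $-\lambda$ gives $1-\lambda=\bigl(\tfrac{d+1/d}{2}\bigr)^2$. Thus $\lambda,-\lambda,\lambda+1,1-\lambda$ are all squares in $K$: writing $\lambda=s^2$ and $-\lambda=t^2$ with $t\ne0$ gives $(s/t)^2=-1$, so $\sqrt{-1}\in K$. Then for $\mathcal{E}_{1,\lambda}$ (with $\alpha_1=-\lambda^2,\ \alpha_2=-1,\ \alpha_3=0$) every difference $\alpha_i-\alpha_j$ is a square: $\pm\lambda^2$ and $\pm1$ are squares because $\mathbf{i}\in K$, and $\pm(\lambda^2-1)=\pm(\lambda+1)(\lambda-1)$ are squares because $\lambda+1$ and $1-\lambda$ are. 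By Theorem \ref{th0} this makes all three $W_i$ divisible by $2$, so $\mathcal{E}_{1,\lambda}(K)$ contains all twelve points of order $4$ (as in Theorem \ref{full4}). I expect the main obstacle to be the bookkeeping in $(\mathrm{i})\Leftrightarrow(\mathrm{ii})$: the notion ``divisible by $2$ in $E(K)$'' only applies to $K$-rational points, so the argument genuinely relies on the fact that all four halves of $W_3$ are already $K$-rational and on the $K$-rationality of the full $2$-torsion; the algebra in the final step (that $\lambda^2-1$ is a square) is then routine once one remembers to harvest $\lambda+1$ and $1-\lambda$ as squares from the parametrization.
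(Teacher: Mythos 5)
Your proposal is correct and follows essentially the same route as the paper: the equivalence (i)$\Leftrightarrow$(iii) via Remark \ref{W3L} together with Lemmas \ref{divLambda} and \ref{divMLambda}, the equivalence (i)$\Leftrightarrow$(ii) via the $K$-rationality of the full $2$-torsion (a step the paper dismisses as obvious but which you rightly spell out with the ``two halves differ by a $K$-rational $2$-torsion point'' argument), and $\sqrt{-1}\in K$ from exhibiting $-1=(-\lambda)/\lambda$ as a ratio of squares. The only cosmetic divergence is at the very end: the paper obtains the twelve points of order $4$ by observing that (ii) forces all points of order dividing $4$ into $\mathcal{E}_{1,\lambda}(K)$ (as differences of the $K$-rational points $R$ with $4R=W_3$), whereas you verify directly that every $\alpha_i-\alpha_j$ is a square and invoke Theorem \ref{th0} as in Theorem \ref{full4}; both are valid.
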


\begin{proof}
The equivalence of (i) and (ii) is obvious. It is also clear that (ii) implies that all points of order (dividing) 4 lie in $\mathcal{E}_{1,\lambda}(K)$.

Recall (Remark \ref{W3L}) that $Q$ with $2Q=W_3$ are exactly the points of  $\mathcal{E}_{1,\lambda}$ with $x(Q)=\pm \lambda$. Now the equivalence of (ii) and (iii) follows from Lemmas \ref{divLambda} and \ref{divMLambda}.

In order to finish the proof, we notice that $\lambda \ne 0$ and
$$-1=\frac{-\lambda}{\lambda}=\left[\frac{ \left[\frac{d-\frac{1}{d}}{2}\right]}{\left[\frac{c-\frac{1}{c}}{2}\right]}\right]^2.$$

\end{proof}


 Suppose that
$$\lambda= \left[\frac{c-\frac{1}{c}}{2}\right]^2 \ \text{ with }
 c \in K \setminus \{  0, \pm1, \pm 1\pm \sqrt{2},  \pm \sqrt{-1}\}$$
and  consider
$Q=(\lambda, (\lambda+1)\lambda)\in \mathcal{E}_{1,\lambda}(K)$  of order $4$ with  $2Q=W_3$.
Let us find a point $R \in \mathcal{E}_{1,\lambda}(K)$ of order $8$ with $2R=Q$.  First, notice that
$$Q=(\lambda, (\lambda+1)\lambda)=
\left(\left[\frac{c-\frac{1}{c}}{2}\right]^2, \left[\frac{c+\frac{1}{c}}{2}\right]^2 \cdot \left[\frac{c-\frac{1}{c}}{2}\right]^2 \right)$$
$$=\left(\frac{(c^2-1)^2}{4c^2},\frac{(c^4-1)^2}{4c^4}\right).$$
We have
$$r_1=\sqrt{\lambda+\lambda^2}=\sqrt{(\lambda+1)\lambda}, \ r_2=\sqrt{\lambda+1}, \ r_3=\sqrt{\lambda}; \ r_1 r_2 r_3=-(\lambda+1)\lambda.$$
This means that
$$r_1=\pm \frac{c-\frac{1}{c}}{2}\cdot \frac{c+\frac{1}{c}}{2}, \  r_2=\pm \frac{c+\frac{1}{c}}{2}, \ r_3= \pm  \frac{c-\frac{1}{c}}{2},$$
and the signs should be chosen in such a way that the product $\ r_1 r_2 r_3$ coincides with
$$-\left[\frac{c-\frac{1}{c}}{2}\right]^2\cdot \left[\frac{c+\frac{1}{c}}{2}\right]^2.$$
For example, we may take
$$r_1=- \frac{c-\frac{1}{c}}{2}\cdot \frac{c+\frac{1}{c}}{2}=-\frac{c^2-\frac{1}{c^2}}{4}=-\frac{c^4-1}{4c^2}, \  r_2= \frac{c+\frac{1}{c}}{2}, \ r_3=   \frac{c-\frac{1}{c}}{2}$$
and get (since $r_2+r_3=c$ and $r_2 r_3=(c^4-1)/4c^2)$)
$$r_1+r_2+r_3=-\frac{c^4-1}{4c^2}+c=\frac{-c^4+4c^3+1}{4c^2},$$
$$r_1 r_2+r_2 r_3+r_3 r_1=c r_1+r_2 r_3= -\frac{c(c^4-1)}{4c^2}+
\frac{c^4-1}{4c^2}
 =\frac{(1-c)(c^4-1)}{4c^2}.$$
Now  \eqref{x1} and  \eqref{chap}  tell us that the coordinates of the corresponding $R$ with $2R=Q$ are as follows:
$$x(R)=x(Q)+r_1 r_2+r_2 r_3+r_3 r_1=
\frac{(c^2-1)^2}{4c^2}+\frac{(1-c)(c^4-1)}{4c^2}=\frac{(1-c)^3(c+1)}{4c},$$
$$y(R)=-(r_1+r_2)(r_2+r_3)(r_1+r_3)=$$
$$-\left(- \frac{c-\frac{1}{c}}{2}\cdot \frac{c+\frac{1}{c}}{2}+ \frac{c+\frac{1}{c}}{2}\right) c
\left(- \frac{c-\frac{1}{c}}{2}\cdot \frac{c+\frac{1}{c}}{2}+ \frac{c-\frac{1}{c}}{2}\right) =$$
$$-\left(1- \frac{c-\frac{1}{c}}{2}\right)\cdot \frac{c+\frac{1}{c}}{2}\cdot
 c \cdot \left(1- \frac{c+\frac{1}{c}}{2}\right) \frac{c-\frac{1}{c}}{2}=$$
 $$-\frac{c^2-\frac{1}{c^2}}{16}\cdot \left(c-2-\frac{1}{c}\right)\left(c-2+\frac{1}{c}\right) c=
-\frac{\left(c^2-\frac{1}{c^2}\right)\left((c-2)^2-\frac{1}{c^2} \right)c}{16}.$$
So, we get the $K$-point of order 8
\begin{equation*}
\label{order8}
R=\left(\frac{(1-c)^3(c+1)}{4c}, -\frac{\left(c^2-\frac{1}{c^2}\right)\left((c-2)^2-\frac{1}{c^2}\right)c}{16}\right)
\end{equation*}
on the elliptic curve
$$\mathcal{E}_{4,c}:=\mathcal{E}_{1,\left(\pm \frac{c-\frac{1}{c}}{2}\right)^2}:
y^2=\left[x+\left(\frac{c-\frac{1}{c}}{2}\right)^4\right](x+1)x$$
for any $c \in K\setminus \{0,\pm 1,\pm 1 \pm \sqrt{2}, \pm \sqrt{-1}\}$.
The group $\mathcal{E}_{4,c}(K)$ contains the subgroup generated by $R$ and $W_1$, which is isomorphic to $\mathbb{Z}/8\mathbb{Z}\oplus \mathbb{Z}/2\mathbb{Z}$.

\begin{thm}
\label{family8}
Let $E$ be an elliptic curve over $K$.
 Then $E(K)$ contains a subgroup  isomorphic to  $\mathbb{Z}/8\mathbb{Z}\oplus \mathbb{Z}/2\mathbb{Z}$
if and only if
 there exists  $c \in K
\setminus \{ 0, \pm1, \pm 1\pm \sqrt{2},  \pm \sqrt{-1}\}$ such that $E$ is isomorphic to $\mathcal{E}_{4,c}$.
\end{thm}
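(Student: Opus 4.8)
The plan is to prove the two implications separately, noting that the forward (``if'') direction is already in hand: the computation preceding the statement produces an explicit $K$-point $R$ of order $8$ on $\mathcal{E}_{4,c}$, and together with the $2$-torsion point $W_1$ it generates a subgroup $\langle R,W_1\rangle\cong\mathbb{Z}/8\mathbb{Z}\oplus\mathbb{Z}/2\mathbb{Z}$. So for this direction I would simply invoke that construction, which is valid for every admissible $c$.

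For the converse, suppose $E(K)$ contains a subgroup $G\cong\mathbb{Z}/8\mathbb{Z}\oplus\mathbb{Z}/2\mathbb{Z}$, and fix a generator $S$ of the cyclic factor of order $8$ together with a complementary point $T$ of order $2$. First I would check that the full $2$-torsion of $E$ is $K$-rational: the points of $G$ of order dividing $2$ are $\infty,\,4S,\,T,\,4S+T$, so the group $E[2]$ of such points has order $4$ and is contained in $E(K)$. Hence $E$ can be written in the form \eqref{E2} with all three points \eqref{W2} defined over $K$. The essential observation is that $P:=4S$ is a point of order $2$ that is \emph{divisible by $4$} in $E(K)$, since $P=4S$ with $S\in E(K)$.

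Next I would normalize coordinates, imitating the proof of Theorem \ref{family2} but upgrading ``divisible by $2$'' to ``divisible by $4$''. After relabeling the roots $\alpha_i$ I may assume $P=W_3$, and after the translation $x\mapsto x-\alpha_3$ I may assume $\alpha_3=0$; both operations are isomorphisms of elliptic curves and carry $P$ to $W_3=(0,0)$. Since $P$ is divisible by $4$ it is in particular divisible by $2$, so Theorem \ref{th0} supplies squares $-\alpha_1=a^2$ and $-\alpha_2=b^2$ with nonzero $a,b\in K$ and $a\neq\pm b$. Applying the scaling isomorphism of Section \ref{torsion} with $\kappa=b$ then transforms $E$ into $\mathcal{E}_{1,\lambda}$ with $\lambda=a/b\in K\setminus\{0,\pm1\}$, while fixing the root $0$ and hence preserving the fact that $W_3=(0,0)$ is divisible by $4$.

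Finally Proposition \ref{div4W3} applies verbatim: $W_3$ is divisible by $4$ on $\mathcal{E}_{1,\lambda}(K)$ exactly when there is $c\in K\setminus\{0,\pm1,\pm1\pm\sqrt2,\pm\sqrt{-1}\}$ with $\lambda^2=[(c-\tfrac1c)/2]^4$. For such a $c$ the leading factor of the equation of $\mathcal{E}_{1,\lambda}$ becomes $x+((c-\tfrac1c)/2)^4$, which is by definition the equation of $\mathcal{E}_{4,c}$; thus $E\cong\mathcal{E}_{1,\lambda}=\mathcal{E}_{4,c}$, and the excluded values of $c$ match those in the statement. The only genuinely delicate point I expect is the bookkeeping in the normalization step, namely ensuring that the order-$2$ point divisible by $4$ is precisely the one sent to $W_3=(0,0)$, so that Proposition \ref{div4W3}, which is stated only for $W_3$, can be invoked; once that is arranged, no further computation is needed, since Proposition \ref{div4W3} (resting on Lemmas \ref{divLambda} and \ref{divMLambda}) already encapsulates all the arithmetic.
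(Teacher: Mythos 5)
Your proposal is correct and follows essentially the same route as the paper's own proof: forward direction by the explicit order-$8$ point $R$ together with $W_1$, converse by normalizing the order-$2$ point divisible by $4$ to $W_3=(0,0)$, reducing to $\mathcal{E}_{1,\lambda}$ via Theorem \ref{th0} and the scaling by $\kappa=b$, and then invoking Proposition \ref{div4W3}. The extra details you supply (identifying $4S$ as the order-$2$ point divisible by $4$, and checking that all of $E[2]$ is $K$-rational) are exactly what the paper compresses into ``Clearly.''
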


\begin{proof}
We know that  $\mathcal{E}_{4,c}(K)$ contains a subgroup isomorphic to $\mathbb{Z}/8\mathbb{Z}\oplus \mathbb{Z}/2\mathbb{Z}$.

Conversely, suppose that $E(K)$ contains a subgroup  isomorphic to  $\mathbb{Z}/8\mathbb{Z}\oplus \mathbb{Z}/2\mathbb{Z}$. This implies that $E(K)$ contains all three points of order 2, i.e.,
$E$ may be represented in the form \eqref{E2}.
Clearly, one of the points \eqref{W2} is divisible by 4
in $E(K)$. We may assume that $W_3$ is divisible by 4. We may also assume that $\alpha_3=0$, i.e., $W_3=(0,0)$. Then we know that there exist  distinct nonzero $a,b \in K$ such that $\alpha_1=-a^2, \alpha_2=-b^2$, i.e., the equation of $E$ is
$$y^2=(x+a^2)(x+b^2)x.$$
Replacing $E$ by $E(b)$ and putting $\lambda=a/b$, we may  assume that
$$E=\mathcal{E}_{1,\lambda}:y^2=(x+\lambda^2)(x+1)x.$$
Since $W_3$ is divisible by 4 in $\mathcal{E}_{1,\lambda}(K)$, the desired result follows from Proposition \ref{div4W3}.
\end{proof}

\begin{rem}
There is
another family of elliptic curves    (\cite[Table 3 on p.  217]{Kubert}, \cite[Appendix E]{Robledo}))
$$y^2+(1-a(t))xy -b(t)y=x^3-b(t)x^2$$
with
 $$a(t)=\frac{(2t+1)(8t^2+4t+1)}{2(4t+1)(8t^2-1)t},\  b(t)=\frac{(2t+1)(8t^2+4t+1)}{(8t^2-1)^2},$$
 whose group of rational points contains a subgroup isomorphic to $\mathbb{Z}/8\mathbb{Z}\oplus \mathbb{Z}/2\mathbb{Z}$.

 Let us assume that $t$ is an element  of an arbitrary field  $K$ (with $\fchar(K)\ne 2)$ such that
 $$t \ne 0, \ 8t^2-1 \ne 0, \ 4t+1  \ne 0$$
 and  put
 $$U(t):=(2t+1)(8t^2+4t+1), \ A(t)=2(4t+1)(8t^2-1)t \ne 0, \ B(t)= (8t^2-1)^2 \ne 0,$$
 $$ a(t)=\frac{U(t)}{A(t)},  \ b(t)=\frac{U(t)}{B(t)}.$$
 Let us  consider the
 cubic curve $\mathfrak{E}_{4,t}$ over $K$ defined by the same equation
 $$\mathfrak{E}_{4,t}: y^2+(1-a(t))xy -b(t)y=x^3-b(t)x^2$$
  as above.
In light of Theorem \ref{family8}, if $\mathfrak{E}_{4,t}$ is an elliptic curve over $K$,  then $\mathfrak{E}_{4,t}$ is isomorphic to $\mathcal{E}_{4,c}$ for a certain $c \in K$.
 Let us find the corresponding $\lambda$ (as a rational function of $t$).
  First, rewrite the equation for $\mathcal{E}_{4,t}$ as
 $$\left(y+\frac{(1-a(t)x)-b(t)}{2}\right)^2=x^3-b(t)x^2+\left(\frac{(1-a(t))x-b(t)}{2}\right)^2,$$
 i.e.,
 $$\left(y+\frac{(1-a(t)x)-b(t)}{2}\right)^2=x^3-\frac{U(t)}{B(t)}\cdot x^2+\left(\frac{\left(1-\frac{U(t)}{A(t)}\right)x-\frac{U(t)}{B(t)}}{2}\right)^2,$$
 Second, multiplying the last equation by $(A(t)B(t))^6$ and introducing new variables
 $$y_1=(A(t)B(t))^3\cdot \left(y+\frac{(1-a(t))x-b(t)}{2}\right), \ x_1=(A(t)B(t))^2\cdot x,$$
 we obtain (with help of {\bf magma}) the following equation for an isomorphic cubic curve $\tilde{\mathfrak{E}}_{4,t}:$
 $$\begin{aligned} y_1^2=x_1^3+\frac{-U(t)A(t)^2 B(t)+((U(t)-A(t))^2 B(t)^2}{4} x_1^2\\+\frac{(U(t)-A(t)) U(t)A(t)^3 B(t)^3}{2} x_1+\frac{A(t)^6 B(t)^4 U(t)^2}{4}\end{aligned}$$
 $$=(x_1-\alpha_1)(x_1-\alpha_2)(x_1-\alpha_3),$$
where
 $$\begin{aligned} \alpha_1= -(-4194304 t^{15} - 5242880 t^{14} - 262144 t^{13} + 2162688 t^{12} +
 753664 t^{11}\\
        - 262144 t^{10} - 172032 t^9
       - 2048 t^8 + 14336 t^7 + 2304 t^6 -
 320 t^5 -
         112 t^4 - 8t^3), \end{aligned} $$

 $$\begin{aligned}\alpha_2= -(4194304 t^{16} + 4194304 t^{15} - 1048576 t^{14} - 2359296 t^{13} - 327680 t^{12}\\
        + 491520 t^{11} + 163840 t^{10}
       - 40960 t^9 - 25600 t^8  + 1792 t^6 + 192 t^5
        - 48 t^4 - 8 t^3  ,\end{aligned}$$

  $$ \begin{aligned}\alpha_3=-(-4194304 t^{15} - 5242880 t^{14} - 262144 t^{13} + 2424832 t^{12} + 1015808 t^{11}\\
        - 294912 t^{10} - 286720 t^9
      - 25600 t^8 + 30720 t^7 + 8960 t^6 - 832 t^5\\
        - 720 t^4 - 72 t^3 + 16 t^2 + 4 t + 1/4).\end{aligned}$$
  Using {\bf magma}, we obtain that
  $$\alpha_2-\alpha_1=-2^{22} t^4 (t+1/2)^4(t^2-1/8)^4, \ \alpha_3-\alpha_1=-2^{18}(t+1/4)^4 (t^2-1/8)^4.$$
  This implies that $\tilde{\mathfrak{E}}_{4,t}$ (and therefore $\mathfrak{E}_{4,t}$) is an elliptic curve over $K$ (i.e., all three
  $\alpha_1, \alpha_2,\alpha_3$ are {\sl distinct} elements of $K$)
   if and only if
  $$t \ne 0, -\frac{1}{2}, -\frac{1}{4}, \pm \frac{1}{2\sqrt{2}}$$
  and
  $$\frac{\alpha_2-\alpha_1}{\alpha_3-\alpha_1}=\left(\frac{2t(t+1/2)}{t+1/4}\right)^4 \ne 1.$$
  Assume that all these conditions hold. Then the change of variable $x_2=x_1+\alpha_1$ transforms $\tilde{\mathfrak{E}}_{3,t}$
  to the elliptic curve
  $$E: y_1^2=x_2(x_2-(\alpha_2-\alpha_1))(x_2-(\alpha_3-\alpha_1))=$$
  $$x_2\left(x_2+2^{22} t^4 (t+1/2)^4(t^2-1/8)^4\right)\left(x_2+2^{18}(t+1/4)^4 (t^2-1/8)^4\right).$$
  If we put $\kappa=2^9 (t+1/4)^2 (t^2-1/8)^2$, then
  $$\kappa^2=-(\alpha_3-\alpha_1)$$ and
  $E$ is isomorphic to the elliptic curve
  $$E(\kappa): {y^{\prime}}^2=x^{\prime}\left(x^{\prime}+ \frac{\alpha_2-\alpha_1}{\alpha_3-\alpha_1}\right)(x^{\prime}+1)
  =x^{\prime}\left(x^{\prime}+ \left(\frac{2t(t+1/2)}{t+1/4}\right)^4\right)(x^{\prime}+1).$$
  Notice that
  $$\frac{2t(t+1/2)}{t+1/4}= \frac{2t(4t+2)}{(4t+1)} =  \frac{4t(4t+2)}{2(4t+1)} =   \frac{(4t+1)^2-1}{2(4t+1)}=
        \frac{(4t+1)-\frac{1}{(4t+1)}}{2},
        $$
  and therefore $E(\kappa)=\mathcal{E}_{4,c}$ with $c=(4t+1)$. This implies that $\mathfrak{E}_{4,t}$ is isomorphic to
  $\mathcal{E}_{4,c}$ with $c=(4t+1)$.

  \end{rem}

\begin{rem}
Suppose that $K=\mathbb{F}_q$ with $q=3,5,7$ or $9$. Then
$$\mathbb{F}_q
\setminus \{ 0,1,-1, \pm 1\pm \sqrt{2},  \pm \sqrt{-1}\}=\emptyset .$$

\end{rem}

\begin{cor}
Let $E$ be an elliptic curve over $\mathbb{F}_q$, where $q=11,13,17,19$. The group $E(\mathbb{F}_q)$ is isomorphic to $\mathbb{Z}/8\mathbb{Z}\oplus \mathbb{Z}/2\mathbb{Z}$ if and only if
$E$ is isomorphic to one of    elliptic curves
$\mathcal{E}_{4,c}$.
\end{cor}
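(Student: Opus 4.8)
The plan is to follow verbatim the template of the preceding corollaries (over $\mathbb{F}_5$, $\mathbb{F}_7$, and the $\mathbb{Z}/4\mathbb{Z}\oplus\mathbb{Z}/4\mathbb{Z}$ cases over $\mathbb{F}_9,\mathbb{F}_{13},\mathbb{F}_{17}$): reduce the ``only if'' direction to Theorem \ref{family8}, and settle the ``if'' direction by combining Theorem \ref{family8} with the Hasse bound \eqref{HasseB}. All the structural content is already packaged into Theorem \ref{family8}, so the corollary is really a counting argument.

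For the forward direction I would note that if $E(\mathbb{F}_q)\cong \mathbb{Z}/8\mathbb{Z}\oplus\mathbb{Z}/2\mathbb{Z}$, then $E(\mathbb{F}_q)$ contains a subgroup isomorphic to $\mathbb{Z}/8\mathbb{Z}\oplus\mathbb{Z}/2\mathbb{Z}$ (namely, itself), so Theorem \ref{family8} immediately produces a $c\in \mathbb{F}_q\setminus\{0,\pm1,\pm1\pm\sqrt{2},\pm\sqrt{-1}\}$ with $E$ isomorphic to $\mathcal{E}_{4,c}$. For the converse, suppose $E$ is isomorphic to $\mathcal{E}_{4,c}$ for some admissible $c$. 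Then Theorem \ref{family8} guarantees that $E(\mathbb{F}_q)$ contains a subgroup isomorphic to $\mathbb{Z}/8\mathbb{Z}\oplus\mathbb{Z}/2\mathbb{Z}$, whence $16$ divides $|E(\mathbb{F}_q)|$. The quantitative input is then \eqref{HasseB}: for each $q\in\{11,13,17,19\}$ we have $|E(\mathbb{F}_q)|\le q+2\sqrt{q}+1$, and the largest of these upper bounds occurs at $q=19$, where $20+2\sqrt{19}<29<32$ (since $\sqrt{19}<4.5$). As $|E(\mathbb{F}_q)|$ is a positive multiple of $16$ that is strictly less than $32$, it must equal $16$. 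Finally, a group of order $16$ that contains a subgroup of order $16$ isomorphic to $\mathbb{Z}/8\mathbb{Z}\oplus\mathbb{Z}/2\mathbb{Z}$ must coincide with it, so $E(\mathbb{F}_q)\cong\mathbb{Z}/8\mathbb{Z}\oplus\mathbb{Z}/2\mathbb{Z}$.

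There is essentially no obstacle here: the only thing to verify by hand is the numerical inequality $q+2\sqrt{q}+1<32$ for $q=11,13,17,19$, which holds comfortably in every case, the tightest being $q=19$. (Note that, unlike the $\mathbb{Z}/4\mathbb{Z}\oplus\mathbb{Z}/4\mathbb{Z}$ corollary, I do not need $\sqrt{-1}\in\mathbb{F}_q$: the curves $\mathcal{E}_{4,c}$ are constructed using only the half-point of $W_3$ with $x(Q)=\lambda$, so full $4$-torsion is not required, and the argument applies uniformly to the primes $q\equiv 3\pmod 4$ as well.) It would also be worth remarking that $q=11,13,17,19$ are precisely the first prime powers for which the parameter set $\mathbb{F}_q\setminus\{0,\pm1,\pm1\pm\sqrt{2},\pm\sqrt{-1}\}$ is nonempty, so that curves $\mathcal{E}_{4,c}$ actually exist, in contrast with the empty case $q=3,5,7,9$ recorded in the preceding remark.
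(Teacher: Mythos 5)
Your proposal is correct and follows essentially the same route as the paper: the ``only if'' direction is immediate from Theorem \ref{family8}, and the ``if'' direction combines Theorem \ref{family8} (so $16$ divides $|E(\mathbb{F}_q)|$) with the Hasse bound $|E(\mathbb{F}_q)|\le 19+2\sqrt{19}+1<32$ to force $|E(\mathbb{F}_q)|=16$. The extra observations (that $\sqrt{-1}\in\mathbb{F}_q$ is not needed, and that the parameter set is nonempty precisely starting at $q=11$) are accurate and consistent with the remark the paper places just before this corollary.
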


\begin{proof}
Suppose that $E(\mathbb{F}_q)$ is isomorphic to $\mathbb{Z}/8\mathbb{Z}\oplus \mathbb{Z}/2\mathbb{Z}$.
It follows from Theorem \ref{family8} that $E$ is isomorphic to
one of the  elliptic curves
$$\mathcal{E}_{4,c}:
y^2=\left[x+\left(\frac{c-\frac{1}{c}}{2}\right)^4\right](x+1)x$$
with $c \in K\setminus \{0,\pm 1,\pm \sqrt{-1}, \pm \sqrt{-1}\}$.
Conversely, suppose that $E$ is isomorphic to one of these  curves. We need to prove that
$E(\mathbb{F}_q)$ is isomorphic to $\mathbb{Z}/8\mathbb{Z}\oplus \mathbb{Z}/2\mathbb{Z}$.  By Theorem \ref{family8}, $E(\mathbb{F}_q)$ contains a subgroup isomorphic to $\mathbb{Z}/8\mathbb{Z}\oplus \mathbb{Z}/2\mathbb{Z}$; in particular, $16$ divides $|E(\mathbb{F}_q)|$. In order to finish the proof, it suffices to check that $|E(\mathbb{F}_q)|<32$, but this inequality follows from the Hasse bound \eqref{HasseB}
$$|E(\mathbb{F}_q)|\le q+2\sqrt{q}+1\le 19+2\sqrt{19}+1<29.$$
\end{proof}

\begin{cor}
Let $E$ be an elliptic curve over $\mathbb{F}_{47}$.  The group $E(\mathbb{F}_{47})$ is isomorphic to $\mathbb{Z}/24\mathbb{Z}\oplus \mathbb{Z}/2\mathbb{Z}$ if and only if
$E$ is isomorphic to one of   elliptic curves
$\mathcal{E}_{4,c}$.
\end{cor}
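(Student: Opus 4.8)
The plan is to follow the template of the preceding corollaries: use Theorem \ref{family8} for one direction and combine the Hasse bound with a Sylow decomposition for the other.

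First I would suppose that $E(\mathbb{F}_{47})$ is isomorphic to $\mathbb{Z}/24\mathbb{Z}\oplus\mathbb{Z}/2\mathbb{Z}$. Since $8$ divides $24$, the cyclic group $\mathbb{Z}/24\mathbb{Z}$ contains a subgroup isomorphic to $\mathbb{Z}/8\mathbb{Z}$, and hence $E(\mathbb{F}_{47})$ contains a subgroup isomorphic to $\mathbb{Z}/8\mathbb{Z}\oplus\mathbb{Z}/2\mathbb{Z}$. Theorem \ref{family8} then immediately yields that $E$ is isomorphic to some $\mathcal{E}_{4,c}$.

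Conversely, suppose $E$ is isomorphic to one of the curves $\mathcal{E}_{4,c}$. By Theorem \ref{family8}, $E(\mathbb{F}_{47})$ contains a subgroup isomorphic to $\mathbb{Z}/8\mathbb{Z}\oplus\mathbb{Z}/2\mathbb{Z}$, so $16$ divides $N:=|E(\mathbb{F}_{47})|$. The Hasse bound \eqref{HasseB} gives $47+1-2\sqrt{47}\le N\le 47+1+2\sqrt{47}$, i.e. $34<N<62$. The unique multiple of $16$ in this range is $48$, so $N=48=2^4\cdot 3$.

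It remains to read off the group structure. The $2$-Sylow subgroup of $E(\mathbb{F}_{47})$ has order $16$ and contains the order-$16$ group $\mathbb{Z}/8\mathbb{Z}\oplus\mathbb{Z}/2\mathbb{Z}$, hence coincides with it; the $3$-Sylow subgroup, being of order $3$, is isomorphic to $\mathbb{Z}/3\mathbb{Z}$. Since $E(\mathbb{F}_{47})$ is abelian, it is the direct product of these Sylow subgroups, and because $\gcd(8,3)=1$ we have $\mathbb{Z}/8\mathbb{Z}\oplus\mathbb{Z}/3\mathbb{Z}\cong\mathbb{Z}/24\mathbb{Z}$; therefore $E(\mathbb{F}_{47})\cong\mathbb{Z}/24\mathbb{Z}\oplus\mathbb{Z}/2\mathbb{Z}$. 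The only step needing genuine care is isolating $N=48$: the width $4\sqrt{47}\approx 27.4$ of the Hasse interval is smaller than $2\cdot 16=32$, so exactly one multiple of $16$ can occur, which is precisely the numerical fact that makes the statement true for $q=47$ (the analogue of the $|E|<16$ and $|E|<32$ checks in the earlier corollaries). No part of the argument is truly difficult.
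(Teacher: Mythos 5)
Your proposal is correct and follows essentially the same route as the paper: the forward direction via Theorem \ref{family8}, and the converse by combining the divisibility of $|E(\mathbb{F}_{47})|$ by $16$ with the Hasse bound to pin down $|E(\mathbb{F}_{47})|=48$. The only cosmetic difference is that you phrase the final step as a Sylow decomposition, while the paper exhibits a subgroup isomorphic to $(\mathbb{Z}/8\mathbb{Z}\oplus\mathbb{Z}/2\mathbb{Z})\oplus\mathbb{Z}/3\mathbb{Z}$ of full order $48$; the two are logically equivalent.
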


\begin{proof}
Suppose that $E(\mathbb{F}_{47})$ is isomorphic to $\mathbb{Z}/24\mathbb{Z}\oplus \mathbb{Z}/2\mathbb{Z}$. Then it contains a subgroup isomorphic to
 $\mathbb{Z}/8\mathbb{Z}\oplus \mathbb{Z}/2\mathbb{Z}$.
It follows from Theorem \ref{family8} that $E$ is isomorphic to
one of   elliptic curves
$$\mathcal{E}_{4,c}:
y^2=\left[x+\left(\frac{c-\frac{1}{c}}{2}\right)^4\right](x+1)x$$
with  $c \in K\setminus \{0, \pm1, \pm 1\pm \sqrt{2},  \pm \sqrt{-1}\}$.

Conversely, suppose that $E$ is isomorphic to one of these  curves. We need to prove that
$E(\mathbb{F}_{47})$ is isomorphic to $\mathbb{Z}/24\mathbb{Z}\oplus \mathbb{Z}/2\mathbb{Z}$.  By Theorem \ref{family8}, $E(\mathbb{F}_{47})$ contains a subgroup isomorphic to $\mathbb{Z}/8\mathbb{Z}\oplus \mathbb{Z}/2\mathbb{Z}$; in particular, $16$ divides $|E(\mathbb{F}_{47})|$. The Hasse bound tells us that
$$47+1-2\sqrt{47}\le |E(\mathbb{F}_{47})|\le 47+1+2\sqrt{47}$$
and therefore
$34<|E(\mathbb{F}_{47})|<62$.
This implies that $|E(\mathbb{F}_{47})|=48$; in particular, $E(\mathbb{F}_{47})$  contains a point of order 3. This implies that $E(\mathbb{F}_{47})$  contains a subgroup isomorphic to
 $$(\mathbb{Z}/8\mathbb{Z}\oplus \mathbb{Z}/2\mathbb{Z}) \oplus \mathbb{Z}/3\mathbb{Z}\cong \mathbb{Z}/24\mathbb{Z}\oplus \mathbb{Z}/2\mathbb{Z}.$$
 Since this subgroup has the same order 48 as the whole group $E(\mathbb{F}_{47})$, we get the desired result.
\end{proof}

\begin{thm}
\label{Q8}
Let $K=\Q$ and
 $E$ be an elliptic curve over $\Q$.
 Then the torsion subgroup $E(\Q)_t$ of $E(\Q)$ is  isomorphic to  $\mathbb{Z}/8\mathbb{Z}\oplus \mathbb{Z}/2\mathbb{Z}$
if and only if
 there exists  $c \in \Q
\setminus \{ 0, \pm 1\}$
 such that $E$ is isomorphic to $\mathcal{E}_{4,c}$.
\end{thm}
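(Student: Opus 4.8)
The plan is to synthesize the explicit versal family from Theorem \ref{family8} with the deep input of Mazur's theorem in the form of Theorem \ref{mazurQ}. The essential structural observation is that $\mathbb{Z}/8\mathbb{Z}\oplus\mathbb{Z}/2\mathbb{Z}$ is precisely the $m=4$ member of the groups $\mathbb{Z}/2m\mathbb{Z}\oplus\mathbb{Z}/2\mathbb{Z}$ covered by Theorem \ref{mazurQ}, so that the mere \emph{presence} of such a subgroup in $E(\Q)$ already pins down the full torsion subgroup. Thus the whole argument reduces to upgrading the ``contains a subgroup'' statement of Theorem \ref{family8} to an ``is isomorphic to'' statement about $E(\Q)_t$.

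For the forward direction I would assume $E(\Q)_t\cong\mathbb{Z}/8\mathbb{Z}\oplus\mathbb{Z}/2\mathbb{Z}$, so that in particular $E(\Q)$ contains a subgroup isomorphic to $\mathbb{Z}/8\mathbb{Z}\oplus\mathbb{Z}/2\mathbb{Z}$. Applying Theorem \ref{family8} with $K=\Q$ produces some $c\in\Q\setminus\{0,\pm1,\pm1\pm\sqrt{2},\pm\sqrt{-1}\}$ with $E$ isomorphic to $\mathcal{E}_{4,c}$. Since $c$ is rational, the irrational excluded values $\pm1\pm\sqrt{2}$ and $\pm\sqrt{-1}$ are automatically avoided, so the admissible parameter set collapses to $\Q\setminus\{0,\pm1\}$, which is exactly the set claimed in the statement. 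For the converse I would take $E$ isomorphic to $\mathcal{E}_{4,c}$ for some $c\in\Q\setminus\{0,\pm1\}$; the same rationality observation places $c$ in the admissible set of Theorem \ref{family8}, so $E(\Q)$ contains a subgroup isomorphic to $\mathbb{Z}/8\mathbb{Z}\oplus\mathbb{Z}/2\mathbb{Z}$. Invoking Theorem \ref{mazurQ} with $m=4$ then forces $E(\Q)_t$ to be isomorphic to $\mathbb{Z}/8\mathbb{Z}\oplus\mathbb{Z}/2\mathbb{Z}$ rather than to any strictly larger torsion group, completing the equivalence.

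There is no genuine computational obstacle here: both the explicit construction of the family $\mathcal{E}_{4,c}$ and the classification of rational torsion have already been carried out in the cited results, so this theorem is a clean consolidation of the two. The single point demanding care is the reconciliation of the two parameter sets, namely checking that over $\Q$ the exclusions $\pm1\pm\sqrt{2}$ and $\pm\sqrt{-1}$ are vacuous, which is what allows the statement to be phrased with the cleaner set $\Q\setminus\{0,\pm1\}$. All the substantive work lives in Theorems \ref{family8} and \ref{mazurQ}, and the role of this proof is simply to align their hypotheses.
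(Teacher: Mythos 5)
Your proposal is correct and follows exactly the paper's own argument: apply Theorem \ref{mazurQ} with $m=4$ to upgrade ``contains a subgroup'' to ``the torsion subgroup is,'' combine with Theorem \ref{family8}, and note that the excluded values $\pm 1\pm\sqrt{2}$ and $\pm\sqrt{-1}$ do not lie in $\Q$, so the parameter set reduces to $\Q\setminus\{0,\pm 1\}$. No differences worth noting.
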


\begin{proof}
By Theorem \ref{mazurQ} applied to $m=4$, if $E(\Q)$ contains a subgroup isomorphic
to  $\mathbb{Z}/8\mathbb{Z}\oplus \mathbb{Z}/2\mathbb{Z}$ then $E(\Q)_t$ is isomorphic
to  $\mathbb{Z}/8\mathbb{Z}\oplus \mathbb{Z}/2\mathbb{Z}$. Now the desired result follows from Theorem \ref{family8}, since
neither $\sqrt{2}$ nor $\sqrt{-1}$ lie in $\Q$.
\end{proof}

\begin{thm}
\label{family84}
Let $E$ be an elliptic curve over $K$.
 Then $E(K)$ contains a subgroup  isomorphic to  $\mathbb{Z}/8\mathbb{Z}\oplus \mathbb{Z}/4\mathbb{Z}$
if and only if $K$ contains $\mathbf{i}=\sqrt{-1}$ and
 there exist  $$c,d \in K
\setminus \{ 0, \pm1, \pm 1\pm \sqrt{2},  \pm \sqrt{-1}\} \ \text{ such that } \
c-\frac{1}{c}=\mathbf{i}\left(d-\frac{1}{d}\right)$$
and
$E$ is isomorphic to $\mathcal{E}_{4,c}$.
\end{thm}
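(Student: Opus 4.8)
The plan is to decouple the target group $\mathbb{Z}/8\mathbb{Z}\oplus\mathbb{Z}/4\mathbb{Z}$ into two conditions: that $E(K)$ contain a point of order $8$ and that $E(K)$ contain the whole group $E[4]$ of points of order dividing $4$. On the one hand, the $4$-torsion of $\mathbb{Z}/8\mathbb{Z}\oplus\mathbb{Z}/4\mathbb{Z}$ is $\mathbb{Z}/4\mathbb{Z}\oplus\mathbb{Z}/4\mathbb{Z}$ and this group obviously contains an element of order $8$. On the other hand, if $R\in E(K)$ has order $8$ and $E[4]\subseteq E(K)$, then $\langle R\rangle+E[4]$ is a finite abelian subgroup of $E[8]$; since $\langle R\rangle\cap E[4]=\langle 2R\rangle$ has order $4$, this subgroup has order $8\cdot 16/4=32$ and exponent $8$, hence is isomorphic to $\mathbb{Z}/8\mathbb{Z}\oplus\mathbb{Z}/4\mathbb{Z}$. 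Thus it suffices to characterize when $E$ admits both a rational point of order $8$ and all of $E[4]$.

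For the ``only if'' direction, suppose $E(K)$ contains $\mathbb{Z}/8\mathbb{Z}\oplus\mathbb{Z}/4\mathbb{Z}$. It then contains $\mathbb{Z}/8\mathbb{Z}\oplus\mathbb{Z}/2\mathbb{Z}$, so Theorem \ref{family8} yields an admissible $c$ with $E\cong\mathcal{E}_{4,c}=\mathcal{E}_{1,\lambda}$, where $\lambda=\left(\frac{c-1/c}{2}\right)^2$. It also contains $\mathbb{Z}/4\mathbb{Z}\oplus\mathbb{Z}/4\mathbb{Z}$, so Theorem \ref{family4} gives $\mathbf{i}=\sqrt{-1}\in K$ and forces $E[4]\subseteq E(K)$; for $\mathcal{E}_{1,\lambda}$ the latter is the condition that $\lambda^2-1$ be a square in $K$. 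Here I would invoke the identity $\lambda+1=\left(\frac{c+1/c}{2}\right)^2$ already recorded in Lemma \ref{divLambda}: it shows that $\lambda^2-1=(\lambda-1)(\lambda+1)$ is a square precisely when $\lambda-1$ is, equivalently (as $\mathbf{i}\in K$) when $1-\lambda$ is. Since in addition $-\lambda=\left(\mathbf{i}\,\frac{c-1/c}{2}\right)^2$ is automatically a square, both $-\lambda$ and $1-\lambda$ are squares, and $-\lambda\neq 0,\pm 1$. The hyperbola parametrization of Lemma \ref{divMLambda} then produces an admissible $d$ with $-\lambda=\left(\frac{d-1/d}{2}\right)^2$. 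Comparing with $\lambda=\left(\frac{c-1/c}{2}\right)^2$ gives $c-1/c=\pm\mathbf{i}(d-1/d)$, and replacing $d$ by $-d$ if needed (the excluded set is stable under negation) yields $c-1/c=\mathbf{i}(d-1/d)$.

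For the ``if'' direction, the hypotheses unwind to $\lambda=\left(\frac{c-1/c}{2}\right)^2$ and $-\lambda=\left(\frac{d-1/d}{2}\right)^2$ with admissible $c,d$, which is exactly condition (iii) of Proposition \ref{W3by4} for $\mathcal{E}_{1,\lambda}=\mathcal{E}_{4,c}$. That proposition then supplies a rational point $R$ with $4R=W_3$, necessarily of order $8$, and guarantees that $E(K)$ contains all twelve points of order $4$, i.e.\ $E[4]\subseteq E(K)$. The group-theoretic observation of the first paragraph assembles these into $\mathbb{Z}/8\mathbb{Z}\oplus\mathbb{Z}/4\mathbb{Z}\subseteq E(K)$, as desired.

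The main obstacle, I expect, is the bookkeeping in the ``only if'' direction that reconciles the full-$4$-torsion criterion of Theorem \ref{family4} (phrased as ``$\lambda^2-1$ is a square'') with the $d$-condition coming from Lemma \ref{divMLambda} (phrased as ``$-\lambda$ and $1-\lambda$ are squares''). The identity $\lambda+1=\left(\frac{c+1/c}{2}\right)^2$ is exactly what makes the two agree, and care is needed to track the admissible ranges of $c$ and $d$ together with the harmless sign ambiguity $c-1/c=\pm\mathbf{i}(d-1/d)$.
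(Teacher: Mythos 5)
Your argument is correct. It shares its backbone with the paper's proof --- the ``if'' direction in both cases is exactly Proposition \ref{W3by4}, (iii) implies (ii), followed by assembling an order-$8$ point and the full rational $4$-torsion into a copy of $\mathbb{Z}/8\mathbb{Z}\oplus\mathbb{Z}/4\mathbb{Z}$ --- but your ``only if'' direction takes a genuinely different route. The paper re-runs the normalization from the proof of Theorem \ref{family8} to put $E$ in the form $\mathcal{E}_{1,\lambda}$, observes that divisibility of $W_3$ by $4$ together with $E[4]\subseteq E(K)$ forces \emph{every} $R$ with $4R=W_3$ to be rational, and then reads off the pair $(c,d)$ from the equivalence (ii) $\Leftrightarrow$ (iii) of Proposition \ref{W3by4}. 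You instead quote Theorems \ref{family8} and \ref{family4} as black boxes, reduce the full-$4$-torsion condition to ``$\lambda^2-1$ is a square'' via Theorem \ref{th0}, and convert that to the existence of $d$ using the identity $\lambda+1=\bigl(\frac{c+1/c}{2}\bigr)^2$ together with the hyperbola parametrization underlying Lemma \ref{divMLambda}; in effect you re-derive the half of Proposition \ref{W3by4} that the paper simply cites. What your route buys is a cleaner logical decomposition (an order-$8$ point plus $E[4]\subseteq E(K)$, settled once and for all by the order count $8\cdot 16/4=32$ inside $E[8]\cong(\mathbb{Z}/8\mathbb{Z})^2$, versus the paper's ad hoc pair of generators $R$ and $\mathcal{Q}$ with $4R=W_3$, $2\mathcal{Q}=W_1$) and maximal reuse of earlier theorems; what the paper's route buys is that no extra identity is needed, since Proposition \ref{W3by4} already packages the criterion. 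The remaining bookkeeping --- admissibility of $d$ and the sign ambiguity $c-\frac{1}{c}=\pm\mathbf{i}\left(d-\frac{1}{d}\right)$ resolved by replacing $d$ with $-d$ --- is handled identically in both.
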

\begin{rem} The above equation defines an open dense set in the plane affine curve \begin{equation}
\label{cd}
\mathcal{M}_{8,4}:(c^2-1)d=\mathbf{i}(d^2-1)c.
\end{equation}
It is immediate that the corresponding projective closure is a nonsingular cubic $\bar{\mathcal{M}}_{8,4}$ with a $K$-point, i.e., an elliptic curve. To obtain a Weierstrass normal form of $\bar{\mathcal{M}}_{8,4}$, we first slightly simplify equation\eqref{cd} by the change of variables
$d=s, \mathbf{i}c=t$ and get $s^2t+ts^2+s-t=0$. Then, using the birational transformation
$$s=\frac{\eta}{\xi+\xi^2},\ t=\frac \eta{1+\xi},$$ we obtain $\eta^2=\xi^3-\xi$.
\footnote{See \cite[Example 1.4.2 on p. 88]{Silver2} for an explicit description of
 the (finite) set of all  $\Q(\mathbf{i})$-points on this elliptic curve; none of them
corresponds to $(c,d)$ that satisfy the conditions of Theorem \ref{family84}.}

\end{rem}
\begin{proof}[Proof of Theorem \ref{family84}]
We have already seen that  $\mathcal{E}_{4,c}(K)$ contains an order 8 point $R$ with $4R=W_3$.  It follows from Proposition \ref{W3by4} that  $\mathcal{E}_{4,c}(K)$ contains all points of order $4$. In particular, it contains an order 4 point
$\mathcal{Q}$ with $2\mathcal{Q}=W_1$. Clearly, $R$ and $\mathcal{Q}$ generate a subgroup isomorphic to $\mathbb{Z}/8\mathbb{Z}\oplus \mathbb{Z}/4\mathbb{Z}$.

Conversely, suppose that $E(K)$ contains a subgroup  isomorphic to  $\mathbb{Z}/8\mathbb{Z}\oplus \mathbb{Z}/4\mathbb{Z}$. This implies that $E(K)$ contains  all twelve points of order 4. In particular,
$E$ may be represented in the form \eqref{E2}.
Clearly, one of the points of order 2 is divisible by 4
in $E(K)$. We may assume that $W_3$ is divisible by 4.
The same arguments as in the proof of Theorem \ref{family8} allow us to assume
 that
$$E=\mathcal{E}_{1,\lambda}:y^2=(x+\lambda^2)(x+1)x.$$
Since $W_3$ is divisible by 4 in $\mathcal{E}_{1,\lambda}(K)$ and  all points of order dividing 4 lie in $\mathcal{E}_{1,\lambda}(K)$,
every point $R$ of  $\mathcal{E}_{1,\lambda}$ with $4R=W_3$ also lies in $\mathcal{E}_{1,\lambda}(K)$.
It follows from Proposition \ref{div4W3} that $K$ contains  $\mathbf{i}=\sqrt{-1}$ and there exist
$$c,d \in K\setminus \{ 0,1,-1, \pm 1\pm \sqrt{2},  \pm \sqrt{-1}\}$$
such that
$$\lambda= \left[\frac{c-\frac{1}{c}}{2}\right]^2, \ -\lambda= \left[\frac{d-\frac{1}{d}}{2}\right]^2.$$
This implies that
$$c-\frac{1}{c}=\pm \mathbf{i} \left (d-\frac{1}{d}\right).$$
Replacing if necessary $d$ by $-d$, we obtain the desired
$$c-\frac{1}{c}= \mathbf{i} \left (d-\frac{1}{d}\right).$$
\end{proof}


\section{Points of order 3}
\label{l6}
The following assertion gives a simple description of points of order 3 on elliptic curves.

\begin{prop}
\label{order3}

 A point $P=(x_0,y_0)\in E(K)$ has order $3$ if and only if
 one can choose three square roots $r_i=\sqrt{x_0-\alpha_i}$ in such a way that
$$r_1 r_2 +r_2 r_3+r_3 r_1=0.$$
\end{prop}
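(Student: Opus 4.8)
The plan is to reformulate ``$P$ has order $3$'' as the single relation $2P=-P$, i.e.\ to regard $P$ itself as one of the four halves of the point $-P=(x_0,-y_0)$, and then to read off the claim directly from the explicit halving formulas \eqref{x1} and \eqref{y1} already derived. Since $P=(x_0,y_0)$ is a finite point we have $P\neq\infty$, so ``order $3$'' is equivalent to $3P=\infty$, and because $3$ is prime this is in turn equivalent to $2P=-P$. Thus the whole proposition reduces to analyzing when $P$ can be halved back to $-P$.

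For the forward direction I would assume $2P=-P$, so that $P$ is a half of $-P$. By the description \eqref{halfP} of all four halves, there is a choice of square roots $s_i=\sqrt{x_0-\alpha_i}$ satisfying the product condition $s_1s_2s_3=y_0$ appropriate to halving $(x_0,-y_0)$ (this is \eqref{product} with $y_0$ replaced by $-y_0$) for which the resulting half equals $P$. Comparing abscissas through \eqref{x1} gives $x_0=x_0+(s_1s_2+s_2s_3+s_3s_1)$, whence $s_1s_2+s_2s_3+s_3s_1=0$, exactly as required.

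Conversely, I would start from a choice of square roots $r_i=\sqrt{x_0-\alpha_i}$ with $r_1r_2+r_2r_3+r_3r_1=0$ and first normalize signs: replacing $(r_1,r_2,r_3)$ by $(-r_1,-r_2,-r_3)$ fixes the degree-two symmetric function $r_1r_2+r_2r_3+r_3r_1$ while negating the product $r_1r_2r_3$, so I may assume $r_1r_2r_3=y_0$. These $r_i$ then satisfy the product condition for halving $-P=(x_0,-y_0)$, and substituting into \eqref{x1} and \eqref{y1} produces a half $Q$ with $x(Q)=x_0+0=x_0$ and $y(Q)=y_0-(r_1+r_2+r_3)\cdot 0=y_0$, so $Q=P$. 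Hence $2P=2Q=-P$, giving $3P=\infty$; since $P\neq\infty$ and $3$ is prime, $P$ has order exactly $3$.

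The one genuine subtlety, and the step I would treat most carefully, is this sign bookkeeping. The halving formulas are valid only for square roots whose product equals minus the ordinate of the point being halved, yet the hypothesis $r_1r_2+r_2r_3+r_3r_1=0$ says nothing about the product; bridging this gap is precisely the observation that a simultaneous sign flip of all three $r_i$ toggles $r_1r_2r_3$ while preserving $r_1r_2+r_2r_3+r_3r_1$. Once that is in hand, both directions are immediate substitutions into \eqref{x1} and \eqref{y1}, with no further computation needed.
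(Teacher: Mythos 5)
Your proposal is correct and follows essentially the same route as the paper: both reduce ``order $3$'' to the single doubling relation between $P$ and $-P$ (you write it as $2P=-P$, the paper as $2(-P)=P$, which is the same thing), read off the vanishing of $r_1r_2+r_2r_3+r_3r_1$ by comparing abscissas in \eqref{x1}, and in the converse use the observation that $(r_1r_2r_3)^2=y_0^2$ together with a simultaneous sign flip to meet the product normalization before substituting into \eqref{x1} and \eqref{y1}. The sign bookkeeping you single out as the key subtlety is exactly the step the paper also makes explicit.
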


\begin{proof}
Indeed, let  $P$ be a point of order 3.  Then $2(-P)=P$.
Hence,  all $x_0-\alpha_i$ are squares in $K$.  By (\ref{x1}),
$$x(-P)=x_0+(r_1 r_2+r_2 r_3+r_3 r_1)$$
for a suitable choice of  $r_1,r_2,r_3$. Since
$x(-P)=x(P)=x_0$, we get $r_1 r_2 +r_2 r_3+r_3 r_1=0$.

Conversely, suppose that there exists a triple of square roots $r_i=\sqrt{x_0-\alpha_i}$ such that $r_1 r_2 +r_2 r_3+r_3 r_1=0$. Since $P\in E(K)$,
$$(r_1 r_2 r_3)^2=(x_0-\alpha_1)(x_0-\alpha_2)(x_0-\alpha_3)=y_0^2,$$
i.e., $r_1 r_2 r_3=\pm y_0$. Replacing $r_1,r_2,r_3$ by $-r_1,-r_2,-r_3$ if necessary, we may  assume that  $r_1 r_2 r_3=-y_0$. Then there exists a point $Q=(x(Q),y(Q))\in E(K)$ such that $2Q=P$, and
$x_1=x(Q), y_1=y(Q)$ are expressed in terms of $r_1,r_2,r_3$ as in (\ref{halfP}). Therefore
$$x(Q)=x_0+(r_1 r_2 +r_2 r_3+r_3 r_1)=x_0,$$
$$y(Q)=-y_0-(r_1+r_2+r_3)(r_1 r_2 +r_2 r_3+r_3 r_1)=-y_0,$$
i.e., $Q=-P, 2(-P)=P$, and so $P$ has order $3$.
\end{proof}

\begin{thm}
\label{fam3}
Let $a_1,a_2,a_3$ be   elements of $K$ such that
all $a_1^2, a_2^2,a_3^2$ are   distinct.
Let us consider the elliptic curve
$$E=E_{a_1,a_2,a_3}: y^2=(x+a_1^2)(x+a_2^2)(x+a_3^2)$$
over $K$. Let
$ P=(0,a_1 a_2 a_3) .$
Then $P$ enjoys the following properties.

\begin{itemize}
\item[(i)]
$P$ is divisible by $2$ in $E(K)$. More precisely, there are four points $Q\in E(K)$ with $2Q=P$,
namely,
\begin{equation*}\begin{aligned}
&(a_2 a_3-a_1 a_2-a_3 a_1, (a_1-a_2)(a_2+a_3)(a_3-a_1)), \\
&(a_3 a_1-a_1 a_2-a_2 a_3,  (a_1-a_2)(a_2-a_3)(a_3+a_1)),\\
&(a_1 a_2-a_2 a_3-a_3 a_1,(a_1+a_2)(a_2-a_3)(a_3-a_1), \\
 &(a_1 a_2+a_2 a_3+a_3 a_1, (a_1+a_2)(a_2+a_3)(a_3+a_1)).
\end{aligned}
\end{equation*}
\item[(ii)]
The following conditions are equivalent.
\begin{enumerate}
\item
$P$ has order $3$.
\item
None of $a_i$ vanishes, i.e.,
$\pm a_1, \pm a_2,\pm a_3$ are six distinct elements of $K$,
 and one of the following four equalities holds:
\begin{equation*}
\begin{aligned}
&a_2 a_3=a_1 a_2+a_3 a_1, \  a_3 a_1=a_1 a_2+a_2 a_3, \\
 &a_1 a_2=a_2 a_3+a_3 a_1, \ a_1 a_2+a_2 a_3+a_3 a_1=0.
\end{aligned}
\end{equation*}
\end{enumerate}
\item[(iii)]
Suppose that  equivalent conditions $(i)-(ii)$ hold. Then one of  four points $Q$ coincides with $-Q$ and has order 3, while the three other  points are of order $6$.
In addition, $E(K)$ contains a subgroup isomorphic to $\mathbb{Z}/6\mathbb{Z}\oplus \mathbb{Z}/2\mathbb{Z}$.
\end{itemize}
\end{thm}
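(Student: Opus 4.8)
The plan is to verify the three parts of Theorem \ref{fam3} directly using the explicit half-point formulas from Section \ref{l1}, specializing the general setup to $P=(0, a_1 a_2 a_3)$ on $E_{a_1,a_2,a_3}$. Here $\alpha_i = -a_i^2$, so $x_0 - \alpha_i = 0 - (-a_i^2) = a_i^2$, which is manifestly a square in $K$ with square root $r_i = \pm a_i$. Since $r_1 r_2 r_3 = -y_0 = -a_1 a_2 a_3$ must hold for a valid half-point, the four admissible sign choices are exactly those with an odd number of minus signs, i.e. $(-a_1,a_2,a_3)$, $(a_1,-a_2,a_3)$, $(a_1,a_2,-a_3)$, and $(-a_1,-a_2,-a_3)$ up to the overall normalization.

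For part (i), I would simply substitute these four sign-choices into formulas \eqref{x1} and \eqref{chap}. Formula \eqref{x1} gives $x(Q) = x_0 + (r_1 r_2 + r_2 r_3 + r_3 r_1) = r_1 r_2 + r_2 r_3 + r_3 r_1$ (since $x_0 = 0$), and \eqref{chap} gives $y(Q) = -(r_1+r_2)(r_2+r_3)(r_3+r_1)$. Plugging in each sign pattern reproduces the four listed points; for instance $(a_1,a_2,-a_3)$ yields abscissa $a_1 a_2 - a_2 a_3 - a_3 a_1$ and ordinate $-(a_1+a_2)(a_2-a_3)(-a_3+a_1) = (a_1+a_2)(a_2-a_3)(a_3-a_1)$, matching the third point. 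This step is purely mechanical sign-tracking.

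For part (ii), I invoke Proposition \ref{order3}: $P$ has order $3$ if and only if some choice of square roots $r_i = \sqrt{x_0 - \alpha_i} = \pm a_i$ satisfies $r_1 r_2 + r_2 r_3 + r_3 r_1 = 0$. Ranging over the eight sign choices of $(\pm a_1, \pm a_2, \pm a_3)$, the expression $r_1 r_2 + r_2 r_3 + r_3 r_1$ takes the four values $\pm(a_1 a_2 + a_2 a_3 + a_3 a_1)$, $\pm(-a_1 a_2 + a_2 a_3 + a_3 a_1)$, and the analogous two, corresponding precisely to the four displayed equalities (each equality and its overall-sign-negation give the same vanishing condition). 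I also need the nonvanishing of the $a_i$: if some $a_i = 0$ then two of the branch points coincide or $P$ is a $2$-torsion point rather than order $3$, which I would rule out by noting that $a_i = 0$ forces $\alpha_i = 0 = x_0$, making $P = W_i$ of order $2$; so genuine order $3$ requires all $a_i \ne 0$, equivalently $\pm a_1, \pm a_2, \pm a_3$ distinct (the distinctness of the $a_i^2$ is already assumed).

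For part (iii), assuming the equivalent conditions hold, the order-$3$ point $P$ satisfies $2(-P) = P$, so exactly one of the four halves $Q$ equals $-P$; since $x(-P) = x(P) = x_0$, this is the unique half with abscissa $0$, i.e. the half with $r_1 r_2 + r_2 r_3 + r_3 r_1 = 0$, and it satisfies $Q = -Q$ only in the sense $Q = -P$ has order $3$. The other three halves $Q$ satisfy $2Q = P$ with $P$ of order $3$, hence $Q$ has order $6$. Finally, $E(K)$ contains $P$ of order $3$ together with all three $2$-torsion points $W_1, W_2, W_3$ (which are $K$-rational by the form \eqref{E2}); the subgroup generated by $P$ and any $W_i$ is then isomorphic to $\mathbb{Z}/3\mathbb{Z} \oplus \mathbb{Z}/2\mathbb{Z} \cong \mathbb{Z}/6\mathbb{Z}$, and adjoining a second independent $2$-torsion point gives $\mathbb{Z}/6\mathbb{Z} \oplus \mathbb{Z}/2\mathbb{Z}$. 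The main obstacle I anticipate is bookkeeping in part (ii): correctly matching the eight sign-patterns to the four symmetric-function equalities and confirming that no spurious solutions arise, together with a careful treatment of the degenerate $a_i = 0$ cases to state the nonvanishing condition cleanly.
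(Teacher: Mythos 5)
Your proposal is correct and follows essentially the same route as the paper: part (i) by substituting the four admissible sign choices $r_i=\pm a_i$ (with $r_1r_2r_3=-a_1a_2a_3$) into formulas \eqref{x1} and \eqref{chap}, part (ii) via Proposition \ref{order3} together with the observation that $a_i=0$ would force $P=W_i$ of order $2$, and part (iii) by the standard order argument for the halves of an order-$3$ point. The only (immaterial) differences are that you generate $\mathbb{Z}/6\mathbb{Z}\oplus\mathbb{Z}/2\mathbb{Z}$ from $P$ and two points of order $2$ rather than from an order-$6$ half $Q$ and a suitable $W_j$ as the paper does, and your remark about ``$Q=-Q$'' should read that one of the four halves equals $-P$ and hence has order $3$.
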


\begin{rem}
Clearly, $E_{a_1,a_2,a_3}=E_{\pm a_1, \pm a_2, \pm a_3}$.
\end{rem}

\begin{proof}[Proof of Theorem \ref{fam3}]
 We have
$$\alpha_1=-a_1^2,\ \alpha_2=-a_2^2,\ \alpha_3=-a_3^2.$$
Let us try to divide $P$ by 2 in $E(K)$. We have
$$r_1=\pm a_1, \ r_2=\pm a_2,\ r_3=\pm a_3.$$
Since all $r_i$ lie in $K$, the point $P=(0,a_1 a_2 a_3)$ is divisible by 2 in $E(K)$.
Let $Q$ be a point on $E$ with $2Q=P$. By (\ref{x1}) and (\ref{chap}),
$$x(Q)=r_1 r_2+r_2 r_3+r_3 r_1, \ y(Q)=-(r_1+r_2)(r_2+r_3)(r_3+r_1)$$
with $r_1 r_2 r_3=-a_1 a_2 a_3$.  Plugging  in  $r_i= \pm a_i$
into the formulas for $x(Q)$ and $y(Q)$, we get explicit formulas for points $Q$ as in the statement of the theorem.
This proves (i).

Let us prove (ii).
Suppose that $P$ has order 3. Since $P$ is not of order $2$, we have
$0=x(P)\ne \alpha_i$ for all $i=1,2,3$.  Since
$$\{\alpha_1,\alpha_2,\alpha_3\}= \{-a_1^2,-a_2^2,-a_3^2\},$$
none of  $a_i$ vanishes. It follows from Proposition \ref{order3}
that one may choose the signs for $r_i$ in such a way that
$r_1 r_2+r_2 r_3+r_3 r_1=0$.
Plugging
in
$r_i=\pm a_i$  into this formula, we get four relations between $a_1,a_2,a_3$ as in (ii)(2).

Now suppose that one of the relations as in (ii)(2) holds. This means that one may choose the signs
of $r_i=\pm a_i$ in such a way that $r_1 r_2+r_2 r_3+r_3 r_1=0$. It follows from  Proposition \ref{order3}
that $P$ has order $3$. This proves (ii).

Let us prove (iii). Since $P$ has order $3$, $2(-P)=P$, i.e., $-P$ is one of the four $Q$'s.
Suppose that $Q$ is a point of $E$ with
$2 {Q}=P, \  {Q}\ne -P$.
Clearly, the order
of
$ {Q}$ is either 3 or 6. Assume that  $ {Q}$ has order $3$. Then
$P=2 {Q}=- {Q}$
and therefore $ {Q}=-P$, which is not the case. Hence $ {Q}$ has order $6$.
Then $3 {Q}$ has order $2$, i.e., coincides with $W_i=(-a_i^2,0)$ for some $i \in \{1,2,3\}$.
Pick $j \in  \{1,2,3\}\setminus \{i\}$ and consider the  point $W_j=(-a_j^2,0)\ne W_i$.
Then the subgroup of $E(K)$ generated by  $ {Q}$ and $W_j$ is isomorphic to $\mathbb{Z}/6\mathbb{Z}\oplus \mathbb{Z}/2\mathbb{Z}$.
 This proves (iii).
\end{proof}

\begin{rem}
In Theorem \ref{fam3} we do {\sl not} assume that $\fchar(K)\ne {\it3}$!
\end{rem}

\begin{cor}
\label{fam3H}
Let $a_1,a_2,a_3$ be   elements of $K$ such that
$a_1^2, a_2^2,a_3^2$ are distinct.

Then the following conditions are equivalent.

\begin{itemize}
\item[(i)]
The point $P =(0,a_1 a_2 a_3)\in E_{a_1,a_2,a_3}(K)$ has order $3$.
\item[(ii)]
None of $a_i$ vanishes, and
one may choose signs for
$$a=\pm a_1,\ b=\pm a_2,\ c=\pm a_3$$
in such a way that
$c=ab/(a+b)$.
\end{itemize}

If these conditions hold, then
$$ E_{a_1,a_2,a_3}=E_{\lambda, b}: y^2=\left(x^2+(\lambda b)^2\right)\left(x+b^2\right)
\left(x+\left(\frac{\lambda}{\lambda+1} b\right)^2 \right),$$
where
$\lambda=a/b \in K\setminus \{ 0, \pm 1, -2,  -\frac{1}{2}\}.$
\end{cor}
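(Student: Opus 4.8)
The plan is to deduce everything from Theorem \ref{fam3}(ii), which already characterizes when $P=(0,a_1a_2a_3)$ has order $3$: none of the $a_i$ vanishes and at least one of the four relations
$$a_2a_3=a_1a_2+a_3a_1,\quad a_3a_1=a_1a_2+a_2a_3,\quad a_1a_2=a_2a_3+a_3a_1,\quad a_1a_2+a_2a_3+a_3a_1=0$$
holds. Thus the whole content of the equivalence (i)$\iff$(ii) is the assertion that these four relations are, taken together, the same as the single condition ``$c=ab/(a+b)$ for some choice of signs $a=\pm a_1$, $b=\pm a_2$, $c=\pm a_3$.'' The final displayed normal form is then a direct rewriting.

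First I would clear the denominator: for nonzero $a,b,c$ the equation $c=ab/(a+b)$ is equivalent to $ab-ca-cb=0$, and this polynomial form automatically forces $a+b\neq0$ (since $a+b=0$ would give $ab=0$). Writing $a=\epsilon_1a_1,\ b=\epsilon_2a_2,\ c=\epsilon_3a_3$ with $\epsilon_i=\pm1$, the relation becomes $\epsilon_1\epsilon_2\,a_1a_2-\epsilon_1\epsilon_3\,a_1a_3-\epsilon_2\epsilon_3\,a_2a_3=0$. The key observation is that this depends only on the three products $u=\epsilon_1\epsilon_2,\ v=\epsilon_1\epsilon_3,\ w=\epsilon_2\epsilon_3$, which satisfy $uvw=1$; there are exactly four admissible triples $(u,v,w)$, and running through them reproduces the four relations of Theorem \ref{fam3}(ii) in a bijective fashion. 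I expect this finite sign-bookkeeping to be the main obstacle: one must check that the asymmetric-looking relation $c=ab/(a+b)$ — which singles out $a_3$ — nevertheless recovers all four symmetric cases once the signs vary, with none missed or repeated. Passing to the relative signs $u,v,w$ with $uvw=1$ is what makes this transparent, and together with the nonvanishing of the $a_i$ it yields (i)$\iff$(ii).

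For the final assertion I would fix a sign choice satisfying $c=ab/(a+b)$ and set $\lambda=a/b$. Then $a^2=(\lambda b)^2$, and a direct substitution gives $c=\lambda b\cdot b/((\lambda+1)b)=\tfrac{\lambda}{\lambda+1}\,b$, hence $c^2=\bigl(\tfrac{\lambda}{\lambda+1}b\bigr)^2$. Plugging $a_1^2=a^2$, $a_2^2=b^2$, $a_3^2=c^2$ into the equation of $E_{a_1,a_2,a_3}$ turns its first factor into $x+(\lambda b)^2$ and its third into $x+\bigl(\tfrac{\lambda}{\lambda+1}b\bigr)^2$, which is exactly the displayed curve $E_{\lambda,b}$.

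Finally I would read off the excluded values of $\lambda$ from the standing hypothesis that $a_1^2,a_2^2,a_3^2$ are distinct and nonzero. Here $\lambda\neq0$ because $a\neq0$; $\lambda\neq-1$ is needed for $c$ to be defined; $\lambda=\pm1$ forces $a^2=b^2$, i.e.\ $a_1^2=a_2^2$; $\lambda=-\tfrac12$ forces $\bigl(\tfrac{\lambda}{\lambda+1}\bigr)^2=1$, i.e.\ $a_3^2=a_2^2$; and $\lambda=-2$ forces $\bigl(\tfrac{\lambda}{\lambda+1}\bigr)^2=\lambda^2$, i.e.\ $a_3^2=a_1^2$. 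Hence the distinctness hypothesis translates precisely into $\lambda\in K\setminus\{0,\pm1,-2,-\tfrac12\}$, completing the proof.
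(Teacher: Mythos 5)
Your proposal is correct and follows essentially the same route as the paper: both directions reduce to Theorem \ref{fam3}(ii), with the equivalence (i)$\iff$(ii) amounting to matching the four symmetric relations among $a_1a_2,a_2a_3,a_3a_1$ with the sign choices in $c=ab/(a+b)$, and the normal form and the exclusions $\lambda\notin\{0,\pm1,-2,-\tfrac12\}$ obtained by the same substitution $\lambda=a/b$, $c=\tfrac{\lambda}{\lambda+1}b$. Your device of passing to the relative signs $u=\epsilon_1\epsilon_2$, $v=\epsilon_1\epsilon_3$, $w=\epsilon_2\epsilon_3$ with $uvw=1$ is a slightly more systematic way of organizing the four-case check that the paper carries out by explicitly listing the sign choices, but the substance is identical.
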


\begin{proof}
Suppose  that condition   (ii) of the corollary holds, i.e.,  none of $a_i$ vanishes, and
one may choose signs for
$$a=\pm a_1,\ b=\pm a_2,\ c=\pm a_3$$
in such a way that
$c=ab/(a+b)$.  Then none of $a,b,c$ vanishes and
$ab=ac+bc$. By Theorem \ref{fam3}(ii),
$\mathcal{P}=(0,abc)$ is a point of order $3$ on the elliptic curve
$$E_{\lambda,b}= E_{a_1,a_2,a_3}.$$
Since $abc=\pm a_1 a_2 a_3$, either $\mathcal{P}=P$ or $\mathcal{P}=-P$.
In both cases $P$ has order $3$.

Notice that $\pm a_1, \pm a_2, \pm a_3$ are six distinct elements of $K$. This means
that $\pm a, \pm b, \pm c$ are also six distinct elements of $K$. If we put $\lambda=a/b$,
then
$$\pm \lambda b, \ \pm  b,\ \pm \frac{\lambda+1}{\lambda}b$$
are six  distinct elements of $K$.  This means (in light of the inequalities  $a\ne 0, b \ne 0$) that
$$\lambda \ne 0, \pm 1, -2, -\frac{1}{2}.$$
Suppose $P$ has order $3$.  By Theorem \ref{fam3}(ii), none of $a_i$ vanishes and one of the following four equalities holds:
\begin{equation*}\begin{aligned}&a_2 a_3=a_1 a_2+a_3 a_1, \  a_3 a_1=a_1 a_2+a_2 a_3,\\ & a_1 a_2=a_2 a_3+a_3 a_1, \ a_1 a_2+a_2 a_3+a_3 a_1=0.
\end{aligned}
\end{equation*}
Here are the corresponding  choices  of $a,b,c$ with $c=ab/(a+b)$:
\begin{equation*}\begin{aligned}
&a=a_1, b=-a_2, c= a_3; \ a=a_1, b=-a_2, c=a_3;\\
 &a=a_1, b=a_2, c=a_3; \  a=a_1, b=a_2, c=-a_3.
\end{aligned}
\end{equation*}
In order to finish the proof, we just need to notice that $a=\lambda b$ and
$$c=\frac{ab}{a+b}=\frac{\lambda b\cdot b}{\lambda b+b}=\frac{\lambda}{\lambda+1} b.$$
\end{proof}

\begin{thm}
\label{family3}
Let $E$ be an elliptic curve over $K$. Then $E(K)$ contains a subgroup isomorphic to  $\mathbb{Z}/6\mathbb{Z}\oplus \mathbb{Z}/2\mathbb{Z}$ if and only
if there exists $\lambda \in K \setminus \{ 0, \pm 1, -2,   -\frac{1}{2}\}$ such that $E$ is isomorphic to
$$\mathcal{E}_{3,\lambda}: y^2=\left(x^2+\lambda^2\right)(x+1)\left(x+\left(\frac{\lambda}{\lambda+1}\right)^2\right).$$
\end{thm}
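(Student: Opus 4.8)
The plan is to follow the same two-step pattern used for Theorems \ref{family2}, \ref{family4} and \ref{family8}, leaning on the order-$3$ analysis already carried out in Theorem \ref{fam3} and Corollary \ref{fam3H}. The observation that drives everything is that $\mathcal{E}_{3,\lambda}$ is literally the curve $E_{a_1,a_2,a_3}$ of Corollary \ref{fam3H} for the special choice $a_1=\lambda$, $a_2=1$, $a_3=\lambda/(\lambda+1)$ (i.e. $b=1$), so both implications should reduce to a direct translation of those earlier results together with some bookkeeping about signs.

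For the ``if'' direction I would first record that for $\lambda\in K\setminus\{0,\pm1,-2,-\frac{1}{2}\}$ the three quantities $\lambda^2$, $1$, $(\lambda/(\lambda+1))^2$ are distinct and nonzero: a short check shows $\lambda^2\ne 1$ excludes $\lambda=\pm1$, the equality $\lambda^2=(\lambda/(\lambda+1))^2$ forces $\lambda=0$ or $-2$, and $1=(\lambda/(\lambda+1))^2$ forces $\lambda=-\frac{1}{2}$. Hence $\mathcal{E}_{3,\lambda}$ is a genuine elliptic curve of the form \eqref{E2}. Setting $a=\lambda$, $b=1$, $c=\lambda/(\lambda+1)$ one has $c=ab/(a+b)$, so condition (ii) of Corollary \ref{fam3H} is satisfied and the point $P=(0,a_1a_2a_3)$ has order $3$. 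Theorem \ref{fam3}(iii) then exhibits a subgroup of $\mathcal{E}_{3,\lambda}(K)$ isomorphic to $\mathbb{Z}/6\mathbb{Z}\oplus\mathbb{Z}/2\mathbb{Z}$.

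For the ``only if'' direction, suppose $E(K)$ contains a copy of $\mathbb{Z}/6\mathbb{Z}\oplus\mathbb{Z}/2\mathbb{Z}$. Its $2$-torsion is all of $(\mathbb{Z}/2\mathbb{Z})^2$, so $E(K)$ contains the full $2$-torsion and $E$ may be written in the form \eqref{E2}; moreover the subgroup contains a point $T$ of order $3$, necessarily with $y(T)\ne 0$. I would then translate the $x$-coordinate by $x(T)$ — an isomorphism of Weierstrass models that respects the group law — so as to move $T$ to a point of the shape $(0,y_0)$; after this normalization the equation reads $y^2=(x-\alpha_1)(x-\alpha_2)(x-\alpha_3)$ with $T=(0,y_0)$. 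Since $T$ has order $3$, Proposition \ref{order3} guarantees that all $-\alpha_i=x(T)-\alpha_i$ are squares in $K$; writing $-\alpha_i=a_i^2$ presents $E$ as $E_{a_1,a_2,a_3}$ with $T=(0,\pm a_1a_2a_3)$, and after replacing some $a_i$ by $-a_i$ we may take $T=(0,a_1a_2a_3)=P$. Corollary \ref{fam3H} now applies directly: the order-$3$ point $P$ forces the existence of signs $a=\pm a_1$, $b=\pm a_2$, $c=\pm a_3$ with $c=ab/(a+b)$ and displays $E$ as $\mathcal{E}_{3,\lambda}$ with $\lambda=a/b\in K\setminus\{0,\pm1,-2,-\frac{1}{2}\}$, as required.

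The one step that needs genuine care, rather than a direct citation, is the normalization in the last paragraph: one must verify that translating $T$ to the fibre $x=0$ is a group-preserving isomorphism of Weierstrass models and that the resulting cubic still has the standard shape \eqref{E2} with distinct roots, so that Proposition \ref{order3} and Corollary \ref{fam3H} are legitimately applicable. Everything else is sign bookkeeping funneled through Theorem \ref{fam3} and Corollary \ref{fam3H}; in particular the admissible range of $\lambda$ is forced on both sides by the single requirement that $a_1^2,a_2^2,a_3^2$ be distinct and nonzero.
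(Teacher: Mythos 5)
Your proposal is correct and follows essentially the same route as the paper's own proof: both directions reduce to Theorem \ref{fam3} and Corollary \ref{fam3H} after normalizing the order-$3$ point to have $x$-coordinate $0$ and writing $-\alpha_i=a_i^2$. The only cosmetic difference is that you cite Proposition \ref{order3} where the paper invokes Theorem \ref{th0} for the squareness of the $-\alpha_i$, and you leave implicit the final rescaling $E_{\lambda,b}\cong E_{\lambda,b}(b)=\mathcal{E}_{3,\lambda}$, which the paper spells out.
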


\begin{proof}[Proof of Theorem  \ref{family3}]
Let $\lambda \in K \setminus \{ 0, \pm 1,  -2, -1/2\}$ and
 put $a_1=\lambda, a_2=1, a_3=\lambda/(\lambda+1)$. Then all $a_i$ do not vanish,
$a_1^2,a_2^2,a_3^2$ are three distinct elements of $K$,
$a_1 a_2=a_2 a_3+a_3 a_1$, and
$\mathcal{E}_{3,\lambda}=E_{a_1,a_2,a_3}$. It follows from Theorem \ref{fam3} that $\mathcal{E}_{3,\lambda}$
contains a subgroup isomorphic to $\mathbb{Z}/6\mathbb{Z}\oplus \mathbb{Z}/2\mathbb{Z}$.

Conversely, suppose that $E$ is an elliptic curve over $K$ such that $E(K)$ contains a subgroup isomorphic
to  $\mathbb{Z}/6\mathbb{Z}\oplus \mathbb{Z}/2\mathbb{Z}$. It follows that all three points of order $2$ lie in $E(K)$, and therefore
$E$ can be represented in the form \eqref{E2}.
It is also clear that $E(K)$ contains a point of order 3.
 Let us choose a point  $P=(x(P),y(P))\in E(K)$  of order 3.
We may assume that $x(P)=0$. We have $P=2(-P)$, and therefore $P$ is divisible by 2 in $E(K)$. By Theorem \ref{th0},
all $x(P)-\alpha_i=-\alpha_i$ are squares in $K$. This implies that there exist elements $a_1,a_2,a_3 \in K$ such that
$\alpha_i=-a_i^2$. Clearly, all three $a_1^2,a_2^2, a_3^2$ are distinct. Since $P$ lies on $E$,
$$y(P)^2=(x(P)+a_1^2)(x(P)+a_2^2)(x(P)+a_3^2)=a_1^2 a_2^2  a_3^2=(a_1 a_2 a_3)^2,$$
and therefore $y(P)= \pm a_1 a_2 a_3$. Replacing $P$ by $-P$ if necessary, we may   assume that $y(P)=a_1 a_2 a_3$,
i.e., $P=(0,a_1 a_2 a_3)$ is a $K$-point of order 3 on
$$E=E_{a_1,a_2,a_3}: y^2=(x+a_1)^2(x+a_2^2)(x+a_3)^2.$$
 It follows from Corollary \ref{fam3H} that there exist  {\sl  nonzero}
$b \in K$ and $\lambda \in K\setminus \{ 0, \pm 1,-2, -1/2\}$ such that
$$E=E_{a_1,a_2,a_3}=E_{\lambda,b}:y^2=\left(x+(\lambda b)^2\right)\left(x+b^2\right)\left(x+\left[\frac{\lambda}{\lambda+1} b\right]^2 \right).$$
But  $E_{\lambda,b}$ is isomorphic to
$$E_{\lambda,b}(b): {y^{\prime}}^2=(x^{\prime}+\lambda^2)(x^{\prime}+1)\left(x^{\prime}+\left[\frac{\lambda}{\lambda+1} \right]^2\right)$$
while the latter coincides with $\mathcal{E}_{3,\lambda}$.
\end{proof}

\begin{rem}
There is a family of elliptic curves  over $\Q$ \cite[Table 3 on p.  217]{Kubert} (see also \cite[Appendix E]{Robledo}),
$$\mathfrak{E}_{3,t}:y^2+(1-a(t))xy -b(t)y=x^3-b(t)x^2,$$
with
$$a(t)=\frac{10-2t}{t^2-9}, \ b(t)=\frac{-2(t-1)^2(t-5)}{(t^2-9)^2}$$
(with $t\in \Q\setminus \{1, 5, \pm 3, 9\}$),
 whose group of rational points contains a subgroup isomorphic to $\mathbb{Z}/6\mathbb{Z}\oplus \mathbb{Z}/2\mathbb{Z}$.
 (The point $(0,0)$ of $\mathfrak{E}_{3,t}$  has order 6, ibid.)
 Let us assume that $t\ne \pm 3$ is an element  of an arbitrary field  $K$ (with $\fchar(K)\ne 2)$  and consider the
 cubic curve $\mathfrak{E}_{3,t}$ over $K$ defined by the same equation as above.

 In light of Theorem \ref{family3}, if $\mathfrak{E}_{3,t}$ is an elliptic curve over $K$,  then $\mathfrak{E}_{3,t}$ is isomorphic to $\mathcal{E}_{3,\lambda}$ for a certain $\lambda \in K$.
 Let us find the corresponding $\lambda$ (as a rational function of $t$).
  First, rewrite the equation for $\mathcal{E}_{3,\lambda}$ as
 $$\left(y+\frac{(1-a(t)x)-b(t)}{2}\right)^2=x^3-b(t)x^2+\left(\frac{(1-a(t))x-b(t)}{2}\right)^2.$$
 Second, multiplying the last equation by $(t^2-9)^6$ and introducing new variables
 $$y_1=(t^2-9)^3\cdot \left(y+\frac{(1-a(t))x-b(t)}{2}\right), \ x_1=(t^2-9)^2\cdot x,$$
 we obtain (with help of {\bf magma}) the equation for an isomorphic cubic curve
 $$\tilde{\mathfrak{E}}_{3,t}: y_1^2=(x_1-\alpha_1)(x_1-\alpha_2)(x_1-\alpha_3),$$
 where
 $$\alpha_1=-(2t^3-10t^2-18t+90)=-2(t-5)(t-3)(t+3),$$
 $$ \alpha_2=-(2t^3-10t^2+14t-6)=-2(t-3)(t-1)^2,$$
  $$ \alpha_3=-\left(\frac{1}{4}t^4-t^3-\frac{5}{2}t^2+7t-\frac{15}{4}\right)=-\frac{1}{4}(t-5)(t+3)(t-1)^2.$$
  We have
  $$\alpha_1-\alpha_2=-2^5(t-3), \ \alpha_2-\alpha_3=\frac{1}{4}\cdot  (t-1)^3(t-9), \ \alpha_3-\alpha_1=-\frac{1}{4}\cdot (t-5)^3(t+3).$$
  This implies that $\tilde{\mathfrak{E}}_{3,t}$ (and therefore $\mathfrak{E}_{3,t}$) is an elliptic curve over $K$ if and only if
  $$t \in K\setminus \{1,\pm 3, 5,9\}.$$
  Further we assume that this condition holds and therefore $\tilde{\mathfrak{E}}_{3,t}$ and  $\mathfrak{E}_{3,t}$ are elliptic curves over $K$.
  Clearly, all three points of order 2 on $\tilde{\mathfrak{E}}_{3,t}$ are defined over $K$ and the $K$-point
  $$Q=(x_1(Q), y_1(Q))=(0, -(t-5)(t-3)(t+3)(t-1)^2)$$ lies on $\tilde{\mathfrak{E}}_{3,t}$. We prove that $Q$ has order 6.
  Let us  consider the point
  $P=2Q\in E(K)$ with coordinates $x_1(P), y_1(P)\in K$. (Since $y_1(P)\ne 0$, $P\ne \infty$.) According to formulas of Section 1, there exists a unique
  triple ${r_1,r_2,r_3}$ of distinct  elements of $K$ such that
  $$(r_1+r_2)(r_2+r_3)(r_3+r_1)=-y_1(Q)=(t-5)(t-3)(t+3)(t-1)^2$$
  and for
  all $i=1,2,3$
  $$x_1(P)-\alpha_i=r_i^2,$$
  $$ \ 0 \ne -\alpha_i=x_1(Q)-\alpha_i=(r_i+r_j)(r_i+r_k),$$
  where $(i,j,k)$ is a permutation of $(1,2,3)$. This implies that
  $$r_1+r_2=\frac{(t-5)(t-3)(t+3)(t-1)^2}{-a_3}=\frac{(t-5)(t-3)(t+3)(t-1)^2}{\frac{1}{4}(t-5)(t+3)(t-1)^2}=4(t-3),$$
  $$r_2+r_3=\frac{(t-5)(t-3)(t+3)(t-1)^2}{-a_1}=\frac{(t-5)(t-3)(t+3)(t-1)^2}{2(t-5)(t-3)(t+3)}=\frac{1}{2}\cdot (t-1)^2,$$
  $$r_3+r_1=\frac{(t-5)(t-3)(t+3)(t-1)^2}{-a_2}=\frac{(t-5)(t-3)(t+3)(t-1)^2}{2(t-3)(t-1)^2}=\frac{1}{2}\cdot (t-5)(t+3).$$
  Hence
  $$r_1+r_2=4(t-3), \ r_2+r_3=\frac{(t-1)^2}{2}, \  r_3+r_1=\frac{(t+3)(t-5)}{2},$$
  and therefore
 $$r_1+r_2+r_3 =\frac{1}{2}\cdot\left((r_1+r_2)+(r_2+r_3)+(r_3+r_1)\right)=\frac{1}{2} \cdot (t^2+2t-19),$$
 which, in turn, implies that
 $$r_1=2t-10=2(t-5), \ r_2=2t-2=2(t-1), \ r_3=\frac{1}{2} \cdot (t-1)(t-5)=\frac{1}{8}r_1 r_2.$$
 One may easily check that
 $$c(t):=-2t^3+14t^2-22t+10=r_i^2+\alpha_i \ \text {for all}\ \ i=1,2,3.$$
 This implies that
 $$x_1(P)=c(t), \ c(t)-\alpha_i=r_i^2  \ \text {for all}\ \ i=1,2,3$$
 and $\tilde{\mathfrak{E}}_{3,t}$ is isomorphic to the elliptic curve
 $$E_{r_1,r_2,r_3}: y_1^2=(x_2+r_1^2)(x_2+r_2^2)(x_3+r_3^2)$$
 with $x_2=x_1-c(t)$. In addition,
 $$y_1(P)=-r_1 r_2 r_3=-2(t-1)^2(t-5).$$
 We have
 $$r_1 r_2=8r_3, \ r_2-r_1=8.$$
 This implies that $(r_2-r_1)r_3= r_1 r_2$,
which means that
 $$(-r_1) r_2+r_2 r_3+(-r_1) r_3=0.$$
 It follows from Proposition \ref{order3} that $P$ has order 3 in $\tilde{\mathfrak{E}}_{3,t}(K)$. (In particular, all $r_i \ne 0$.)
  Since $2Q=P$,
 the order of $Q$ in $\tilde{\mathfrak{E}}_{3,t}$ is 6.

 Notice that
 $$-r_3=\frac{(-r_1) r_2}{(-r_1)+r_2}$$
 and
 $$E_{r_1,r_2,r_3}=E_{-r_1,r_2,-r_3}.$$
 It follows from Corollary \ref{fam3H} and the end of the proof of Theorem  \ref{family3} that $E_{r_1,r_2,r_3}$ is isomorphic to $\mathcal{E}_{3,\lambda}$
 with
 $$\lambda=\frac{-r_1}{r_2}=\frac{-(2t-10)}{2t-2}=-\frac{t-5}{t-1}.$$
 This implies that $\mathfrak{E}_{3,t}$ is isomorphic to $\mathcal{E}_{3,\lambda}$ with $\lambda=-(t-5)/(t-1)$.
\end{rem}

\begin{cor}
Let $E$ be an elliptic curve over $\mathbb{F}_q$,  where $q=7,9,11,13$.
The group $E(\mathbb{F}_q)$ is isomorphic to $\mathbb{Z}/6\mathbb{Z}\oplus \mathbb{Z}/2\mathbb{Z}$ if and only if
$E$ is isomorphic to one of   elliptic curves
$\mathcal{E}_{3,\lambda}$.
\end{cor}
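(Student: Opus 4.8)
The plan is to follow the same two-step template used in the earlier corollaries of this section, combining Theorem \ref{family3} with the Hasse bound (Theorem \ref{hasse}).

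For the forward implication, I would note that if $E(\mathbb{F}_q)$ is isomorphic to $\mathbb{Z}/6\mathbb{Z}\oplus \mathbb{Z}/2\mathbb{Z}$, then in particular $E(\mathbb{F}_q)$ contains a subgroup isomorphic to $\mathbb{Z}/6\mathbb{Z}\oplus \mathbb{Z}/2\mathbb{Z}$. Theorem \ref{family3} then applies directly and produces a $\lambda \in \mathbb{F}_q \setminus \{0,\pm 1,-2,-\frac12\}$ such that $E$ is isomorphic to $\mathcal{E}_{3,\lambda}$. This direction is immediate and carries no computation.

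For the converse, I would start from the assumption that $E$ is isomorphic to some $\mathcal{E}_{3,\lambda}$. By Theorem \ref{family3}, $E(\mathbb{F}_q)$ then contains a subgroup isomorphic to $\mathbb{Z}/6\mathbb{Z}\oplus \mathbb{Z}/2\mathbb{Z}$, so $12$ divides $|E(\mathbb{F}_q)|$. The key step is to pin down the order exactly: it suffices to prove $|E(\mathbb{F}_q)| < 24$, since then $12 \mid |E(\mathbb{F}_q)|$ forces $|E(\mathbb{F}_q)| = 12$, and an order-$12$ subgroup of an order-$12$ group must be the whole group, yielding $E(\mathbb{F}_q) \cong \mathbb{Z}/6\mathbb{Z}\oplus \mathbb{Z}/2\mathbb{Z}$. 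The required inequality comes from the upper Hasse bound \eqref{HasseB}: the worst case is $q=13$, where $|E(\mathbb{F}_{13})| \le 13 + 2\sqrt{13} + 1 < 24$, and the bound is smaller still for $q = 7,9,11$.

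I expect no genuine obstacle here. The only points that warrant a line of verification are that valid parameters $\lambda$ actually exist, which is clear because the excluded set $\{0,\pm 1,-2,-\frac12\}$ has at most five elements while $q \ge 7$, and that the Hasse upper bound indeed stays below $24$ for each of $q = 7,9,11,13$, as indicated above.
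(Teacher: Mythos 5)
Your proposal is correct and follows essentially the same route as the paper: the forward direction is an immediate application of Theorem \ref{family3}, and the converse combines the subgroup of order $12$ guaranteed by that theorem with the Hasse upper bound $|E(\mathbb{F}_q)|\le 13+2\sqrt{13}+1<22<24$ to force $|E(\mathbb{F}_q)|=12$. The extra remark on the non-emptiness of the parameter set is harmless but not needed for the stated equivalence.
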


\begin{proof}
Suppose that $E(\mathbb{F}_q)$ is isomorphic to $\mathbb{Z}/6\mathbb{Z}\oplus \mathbb{Z}/2\mathbb{Z}$.
By Theorem \ref{family3}, $E$ is isomorphic to
one of  elliptic curves
$\mathcal{E}_{3,\lambda}$.

Conversely, suppose that $E$ is isomorphic to one of these curves. We need to prove that
$E(\mathbb{F}_q)$ is isomorphic to $\mathbb{Z}/6\mathbb{Z}\oplus \mathbb{Z}/2\mathbb{Z}$.  By Theorem \ref{family3}, $E(\mathbb{F}_q)$ contains a subgroup isomorphic to $\mathbb{Z}/6\mathbb{Z}\oplus \mathbb{Z}/2\mathbb{Z}$; in particular, $12$ divides $|E(\mathbb{F}_q)|$. In order to finish the proof, it suffices to check that $|E(\mathbb{F}_q)|<24$, but this inequality follows from the Hasse bound \eqref{HasseB}
$$|E(\mathbb{F}_q)|\le q+2\sqrt{q}+1\le 13+2\sqrt{13}+1<22.$$
\end{proof}

\begin{cor}
Let $E$ be an elliptic curve over $\mathbb{F}_{23}$.
The group $E(\mathbb{F}_{23})$ is isomorphic to $\mathbb{Z}/12\mathbb{Z}\oplus \mathbb{Z}/2\mathbb{Z}$ if and only if
$E$ is isomorphic to one of   elliptic curves
$\mathcal{E}_{3,\lambda}$.
\end{cor}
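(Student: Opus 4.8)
The plan is to reuse, essentially verbatim, the two-step template of the preceding corollaries: combine Theorem~\ref{family3} with the Hasse bound~\eqref{HasseB} and a short analysis of the group structure.

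First I would dispose of the forward direction. If $E(\mathbb{F}_{23})\cong\mathbb{Z}/12\mathbb{Z}\oplus\mathbb{Z}/2\mathbb{Z}$, then it contains the subgroup $2(\mathbb{Z}/12\mathbb{Z})\oplus\mathbb{Z}/2\mathbb{Z}\cong\mathbb{Z}/6\mathbb{Z}\oplus\mathbb{Z}/2\mathbb{Z}$, so Theorem~\ref{family3} produces $\lambda\in K\setminus\{0,\pm1,-2,-\tfrac12\}$ with $E$ isomorphic to $\mathcal{E}_{3,\lambda}$.

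For the converse, suppose $E\cong\mathcal{E}_{3,\lambda}$. By Theorem~\ref{family3}, $E(\mathbb{F}_{23})$ contains a subgroup isomorphic to $\mathbb{Z}/6\mathbb{Z}\oplus\mathbb{Z}/2\mathbb{Z}$; in particular $12$ divides $|E(\mathbb{F}_{23})|$, and $E(\mathbb{F}_{23})$ contains the full $2$-torsion $\mathbb{Z}/2\mathbb{Z}\oplus\mathbb{Z}/2\mathbb{Z}$. Since $\sqrt{23}<5$, the Hasse bound~\eqref{HasseB} gives $14<24-2\sqrt{23}\le|E(\mathbb{F}_{23})|\le24+2\sqrt{23}<34$, and the only multiple of $12$ in the open interval $(14,34)$ is $24$; hence $|E(\mathbb{F}_{23})|=24$.

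It then remains to identify the isomorphism type of an abelian group of order $24=2^3\cdot3$ that contains $\mathbb{Z}/6\mathbb{Z}\oplus\mathbb{Z}/2\mathbb{Z}$. The $3$-Sylow subgroup is cyclic of order $3$. The $2$-Sylow subgroup has order $8$; since $E(\mathbb{F}_{23})$ is a product of at most two cyclic groups and already contains $\mathbb{Z}/2\mathbb{Z}\oplus\mathbb{Z}/2\mathbb{Z}$, this $2$-Sylow subgroup has rank exactly $2$ and is therefore $\mathbb{Z}/4\mathbb{Z}\oplus\mathbb{Z}/2\mathbb{Z}$. Consequently $E(\mathbb{F}_{23})\cong\mathbb{Z}/4\mathbb{Z}\oplus\mathbb{Z}/2\mathbb{Z}\oplus\mathbb{Z}/3\mathbb{Z}\cong\mathbb{Z}/12\mathbb{Z}\oplus\mathbb{Z}/2\mathbb{Z}$, as required. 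The only point needing any care is the passage from the two Hasse inequalities to the exact value $24$ together with the rank-$2$ Sylow observation; everything else is mechanical and mirrors the $\mathbb{F}_{29}$ and $\mathbb{F}_{47}$ corollaries.
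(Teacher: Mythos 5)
Your proposal is correct and follows essentially the same route as the paper: both directions reduce to Theorem~\ref{family3}, the Hasse bound pins down $|E(\mathbb{F}_{23})|=24$ as the unique multiple of $12$ in $(14,34)$, and the $2$-primary part is identified as $\mathbb{Z}/4\mathbb{Z}\oplus\mathbb{Z}/2\mathbb{Z}$ from the fact that it is a product of two cyclic groups of even order. The only cosmetic difference is that you phrase the last step via Sylow subgroups while the paper speaks of the $2$-primary component, which is the same argument.
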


\begin{proof}
Suppose that $E(\mathbb{F}_{23})$ is isomorphic to $\mathbb{Z}/12\mathbb{Z}\oplus \mathbb{Z}/2\mathbb{Z}$. Then it contains a subgroup isomorphic to
 $\mathbb{Z}/6\mathbb{Z}\oplus \mathbb{Z}/2\mathbb{Z}$.
It follows from Theorem \ref{family3} that $E$ is isomorphic to
one of  elliptic curves
$\mathcal{E}_{3,\lambda}$.

Conversely, suppose that $E$ is isomorphic to one of these curves. We need to prove that
$E(\mathbb{F}_{23})$ is isomorphic to $\mathbb{Z}/12\mathbb{Z}\oplus \mathbb{Z}/2\mathbb{Z}$.  By Theorem \ref{family3}, $E(\mathbb{F}_{23})$ contains a subgroup isomorphic to $\mathbb{Z}/6\mathbb{Z}\oplus \mathbb{Z}/2\mathbb{Z}$; in particular, $12$ divides $|E(\mathbb{F}_{23})|$. The Hasse bound \eqref{HasseB} tells us that
$$23+1-2\sqrt{23}\le |E(\mathbb{F}_{23})|\le 23+1+2\sqrt{23}$$
and therefore
$14<|E(\mathbb{F}_{23})|< 34$.
It follows that $|E(\mathbb{F}_{23})|=24$; in particular the 2-primary component  $E(\mathbb{F}_{23})(2)$ of  $E(\mathbb{F}_{23})$  has order 8.  On the other hand,  $E(\mathbb{F}_{23})(2)$ is isomorphic to a product of two cyclic groups, each of which has even order. This implies that  $E(\mathbb{F}_{23})(2)$ is isomorphic to $\mathbb{Z}/4\mathbb{Z}\oplus \mathbb{Z}/2\mathbb{Z}$. Taking into account that  $E(\mathbb{F}_{23})$  contains a point of order 3, we conclude that it contains a subgroup isomorphic to
$$(\mathbb{Z}/4\mathbb{Z}\oplus \mathbb{Z}/2\mathbb{Z})\oplus \mathbb{Z}/3\mathbb{Z}\cong
\mathbb{Z}/12\mathbb{Z}\oplus \mathbb{Z}/3\mathbb{Z}.$$
This subgroup has the same order 24 as the whole group  $E(\mathbb{F}_{23})$, which ends the proof.
\end{proof}

\begin{thm}
\label{Q6}
Let $K=\Q$  and $E$ an elliptic curve over $\Q$. Then the torsion subgroup $E(\Q)_t$ of $E(\Q)$ is isomorphic to
 $\mathbb{Z}/6\mathbb{Z}\oplus \mathbb{Z}/2\mathbb{Z}$
 if and only
if there exists $\lambda \in \Q \setminus \{ 0, \pm 1, -2,   -\frac{1}{2}\}$ such that $E$ is isomorphic to
$\mathcal{E}_{3,\lambda}$.
\end{thm}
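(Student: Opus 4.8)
The plan is to deduce this $\Q$-specific statement from the general versal-family characterization of Theorem \ref{family3} together with Mazur's theorem (Theorem \ref{mazurQ}), in exactly the spirit of the proof of Theorem \ref{Q8}. The guiding observation is that, over $\Q$, mere membership of a $\mathbb{Z}/6\mathbb{Z}\oplus\mathbb{Z}/2\mathbb{Z}$-subgroup already pins down the \emph{entire} torsion subgroup, so the ``contains a subgroup'' hypothesis of Theorem \ref{family3} and the ``$E(\Q)_t$ is isomorphic to'' conclusion sought here actually coincide. Consequently the whole argument should reduce to chaining two previously established results, with no fresh computation required.

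For the reverse implication I would start from $\lambda \in \Q\setminus\{0,\pm1,-2,-\frac{1}{2}\}$ with $E$ isomorphic to $\mathcal{E}_{3,\lambda}$. By Theorem \ref{family3} this forces $E(\Q)$ to contain a subgroup isomorphic to $\mathbb{Z}/6\mathbb{Z}\oplus\mathbb{Z}/2\mathbb{Z}$. Applying Theorem \ref{mazurQ} with $m=3$ then upgrades this to the statement that $E(\Q)_t$ is itself isomorphic to $\mathbb{Z}/6\mathbb{Z}\oplus\mathbb{Z}/2\mathbb{Z}$. For the forward implication, if $E(\Q)_t\cong\mathbb{Z}/6\mathbb{Z}\oplus\mathbb{Z}/2\mathbb{Z}$, then in particular $E(\Q)$ contains such a subgroup, and Theorem \ref{family3} directly produces the required $\lambda$ lying in the prescribed set.

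The subtle point, rather than a true obstacle, is to see why no analogue of the $\sqrt{-1}$ or $\sqrt{2}$ hypotheses of the order-$8$ and $\mathbb{Z}/4\mathbb{Z}\oplus\mathbb{Z}/4\mathbb{Z}$ cases is needed here. It is precisely Mazur's classification that excludes $\mathbb{Z}/12\mathbb{Z}\oplus\mathbb{Z}/2\mathbb{Z}$ as a torsion subgroup over $\Q$, so a copy of $\mathbb{Z}/6\mathbb{Z}\oplus\mathbb{Z}/2\mathbb{Z}$ cannot be enlarged inside $E(\Q)_t$; and since $\mathcal{E}_{3,\lambda}$ is defined over $\Q$ for every admissible rational $\lambda$ without any supplementary field condition, the two implications fit together without the qualifications that complicated Theorems \ref{Qi} and \ref{family84}. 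I therefore expect the proof to consist of little more than invoking Theorems \ref{family3} and \ref{mazurQ}.
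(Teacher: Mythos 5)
Your proposal is correct and matches the paper's own proof exactly: both implications are obtained by combining Theorem \ref{family3} with Theorem \ref{mazurQ} applied to $m=3$, the latter upgrading ``contains a subgroup isomorphic to $\mathbb{Z}/6\mathbb{Z}\oplus\mathbb{Z}/2\mathbb{Z}$'' to ``$E(\Q)_t$ is isomorphic to $\mathbb{Z}/6\mathbb{Z}\oplus\mathbb{Z}/2\mathbb{Z}$.'' No further comment is needed.
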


\begin{proof}
By Theorem \ref{mazurQ} applied to $m=3$, if $E(\Q)$ contains a subgroup isomorphic
to  $\mathbb{Z}/6\mathbb{Z}\oplus \mathbb{Z}/2\mathbb{Z}$ then $E(\Q)_t$ is isomorphic
to  $\mathbb{Z}/6\mathbb{Z}\oplus \mathbb{Z}/2\mathbb{Z}$. Now the desired result follows from Theorem \ref{family3}.
\end{proof}

\section{Points of order $5$}
\label{l5}

The following assertion gives a  description of points of order $5$ on elliptic curves.
\begin{prop}\label{order5}
Let $P=(x_0,y_0)\in E(K)$.
The point  $P$ has order  $5$ if and only if, for any permutation $i,j,k$ of $1,2,3$,
one can choose square roots $r_i=\sqrt{x_0-\alpha_i}$ and  $r_i^{(1)}=\sqrt{(r_i+r_j)(r_i+r_k)}$ in such a way that
\begin{equation}
\label{ORDER5}
\begin{aligned}
(r_1 r_2+r_2 r_3+r_3 r_1)+(r_1^{(1)} r_2^{(1)}+r_2 ^{(1)}r_3^{(1)}+r_3^{(1)} r_1^{(1)})=0,\\
r_1 r_2+r_2 r_3+r_3 r_1\ne 0. \end{aligned}  \end{equation}

\end{prop}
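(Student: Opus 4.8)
The plan is to read condition \eqref{ORDER5} geometrically: it asserts that $P$ admits a ``quarter'' $R\in E(K)$ (a point with $4R=P$), passing through an intermediate half $Q$ with $2Q=P$, such that $x(R)=x(P)$ while $x(Q)\ne x(P)$. To set up this dictionary, abbreviate $s=r_1r_2+r_2r_3+r_3r_1$ and $s'=r_1^{(1)}r_2^{(1)}+r_2^{(1)}r_3^{(1)}+r_3^{(1)}r_1^{(1)}$, and recall that each $r_i^{(1)}$ is attached to the index $i$ via the two remaining indices $j,k$. Once signs are chosen with $r_1r_2r_3=-y_0$, formulas \eqref{x1} and \eqref{x1prod} say that $(r_1,r_2,r_3)$ yields a half $Q\in E(K)$ of $P$ with $x(Q)=x_0+s$ and $x(Q)-\alpha_i=(r_i+r_j)(r_i+r_k)=(r_i^{(1)})^2$; then, choosing the signs of the $r_i^{(1)}$ with $r_1^{(1)}r_2^{(1)}r_3^{(1)}=-y(Q)$, formula \eqref{x1} applied to $Q$ yields a half $R\in E(K)$ of $Q$ with $x(R)=x(Q)+s'=x_0+s+s'$. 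Because flipping all three signs of the $r_i$ (resp. of the $r_i^{(1)}$) leaves both $s$ and $s'$ unchanged and only multiplies $r_1r_2r_3$ (resp. $r_1^{(1)}r_2^{(1)}r_3^{(1)}$) by $-1$, the normalizations $r_1r_2r_3=-y_0$ and $r_1^{(1)}r_2^{(1)}r_3^{(1)}=-y(Q)$ may always be arranged without affecting \eqref{ORDER5}. Thus \eqref{ORDER5} is equivalent to the existence of $Q,R\in E(K)$ with $2Q=P$, $2R=Q$, $x(R)=x_0=x(P)$, and $x(Q)\ne x_0$.

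For the forward implication I would assume $P$ has order $5$. Then $2(3P)=6P=P$, so $Q:=3P$ is a half of $P$ in $E(K)$, and $2(-P)=-2P=3P=Q$, so $R:=-P$ is a half of $Q$ in $E(K)$. Here $x(R)=x(-P)=x_0$, while $x(Q)=x(3P)\ne x(P)$ since $3P\ne\pm P$ for a point of order $5$ (equality would force $2P=\infty$ or $4P=\infty$). Running the dictionary above backwards produces the square roots $r_i$ and $r_i^{(1)}$ realizing $s+s'=0$ and $s\ne 0$.

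For the converse, the dictionary furnishes $Q,R\in E(K)$ with $2Q=P$, $2R=Q$ (hence $4R=P$), $x(R)=x(P)$, and $x(Q)\ne x(P)$. Since points of $E$ sharing an abscissa are equal or opposite, $R=\pm P$. If $R=P$, then $4R=P$ gives $3P=\infty$, so $2P=-P$, and $Q=2R=2P=-P$, whence $x(Q)=x(-P)=x(P)$, contradicting $x(Q)\ne x(P)$. Therefore $R=-P$, and then $P=2Q=4R=-4P$, i.e. $5P=\infty$; as $P\ne\infty$, its order is exactly $5$.

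The main obstacle is purely bookkeeping of signs: I must verify that the unconstrained sign choices permitted in \eqref{ORDER5} can always be normalized to $r_1r_2r_3=-y_0$ and $r_1^{(1)}r_2^{(1)}r_3^{(1)}=-y(Q)$ — the hypotheses under which the half-point formulas of Section \ref{l1} deliver genuine $K$-points $Q$ and $R$ — and that this normalization is invisible to $s$ and $s'$. Granting that, the only delicate point is excluding the ``order $3$'' alternative $R=P$, which is precisely the role of the side condition $r_1r_2+r_2r_3+r_3r_1\ne 0$.
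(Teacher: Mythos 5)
Your proof is correct and follows essentially the same route as the paper's: both translate \eqref{ORDER5} into the existence of a quarter $R$ of $P$ with $x(R)=x(P)$ passing through a half $Q$ with $x(Q)\ne x(P)$, identify $R=-P$ (equivalently $Q=3P$) in the forward direction, and rule out the order-$3$ alternative $R=P$ in the converse. The only cosmetic difference is that you isolate the sign-normalization bookkeeping as an explicit dictionary up front, while the paper handles it inline.
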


\begin{rem}
Notice that if we drop the condition $r_1r_2r_3=-y_0$ in formulas \eqref{x1} and \eqref{chap}, then we get $8$ points $Q$ such that  $2Q=\pm P$. Similarly, if we drop the conditions
$r_1r_2r_3=-y_0$, $r_1^{(1)} r_2^{(1)} r_3^{(1)}=(r_1+r_2)(r_2+r_3)(r_3+r_1)$ in the formulas \eqref{1/4}, then we obtain all points $R$ for which $4R=\pm P$.
\end{rem}
\begin{proof}[Proof of Proposition \ref{order5}]
Suppose that $P$ has  order $5$. Then $-P$ is a $1/4$th of $P$. Therefore
there exist $r_i$ and $r_j^{(1)}$ such that
$$x(-P)=x(P)+(r_1 r_2+r_2 r_3+r_3 r_1)+(r_1^{(1)} r_2^{(1)}+r_2 ^{(1)}r_3^{(1)}+r_3^{(1)} r_1^{(1)}).$$
Since $x(P)=x(-P)$, we have
$$(r_1 r_2+r_2 r_3+r_3 r_1)+(r_1^{(1)} r_2^{(1)}+r_2 ^{(1)}r_3^{(1)}+r_3^{(1)} r_1^{(1)})=0.$$
On the other hand, if $r_1 r_2+r_2 r_3+r_3 r_1$, then the corresponding $Q$ (with $2Q=P$) satisfies
$$x(Q)=x(P)+(r_1 r_2+r_2 r_3+r_3 r_1)=x(P)$$
and therefore $Q=P$ or $-P$.  Since $2Q=P$, either $P=2P$ or $Q=-P=-2Q$ has order $5$. Clearly, $P\ne 2P$. If $Q=-2Q$ then $Q$ has order dividing $3$, which is not true, because its order is $5$.  The contradiction obtained proves that
$r_1 r_2+r_2 r_3+r_3 r_1 \ne 0$.

Conversely, suppose there exist square roots
$$r_i=\sqrt{x_0-\alpha_i} \ \text{ and } \ r_i^{(1)}=\sqrt{(r_i+r_j)(r_i+r_k)}$$ that satisfy (\ref{ORDER5}).
Replacing if necessary all $r_i$ by $-r_i$, we may and will assume that $r_1 r_2 r_3=-y(P)$. Let $Q=(x(Q),y(Q))$ be the corresponding half of $P$ with
$x(Q)=x(P)+(r_1 r_2+r_2 r_3+r_3 r_1)$. Since $r_1 r_2+r_2 r_3+r_3 r_1\ne 0$, we have $x(Q)\ne x(P)$; in particular,
$Q \ne -P$.
 Replacing if necessary all $r_i^{(1)}$ by $r_i^{(1)}$ , we may and will assume that
 $$r_1^{(1)} r_2^{(1)} r_3^{(1)}=(r_1+r_2)(r_2+r_3)(r_3+r_1)=-y(Q).$$
 Let $R=(x(R),y(R))$ be the corresponding half of $Q$. Then $4R=2(2R)=2Q=P$ and
 $$x(R)=x(P)+(r_1 r_2+r_2 r_3+r_3 r_1)+(r_1^{(1)} r_2^{(1)}+r_2 ^{(1)}r_3^{(1)}+r_3^{(1)} r_1^{(1)})=x(P).$$
 This means that either $R=P$ or $R=-P$.  If $R=P$, then $R=4R$ and $R$ has order $3$. This implies that both $Q=2R$ and $P=4R$ also have order $3$. It follows that  $P=-Q$, which is not the case.  Therefore
  $R=-P$. This means that $R=-4R$, i.e., $R$ has order $5$ and therefore $P=-R$ also has order $5$.
\end{proof}

In what follows we will use the following identities in the polynomial ring $\mathbb{Z}[t_1,t_2,t_3]$ that could be checked either directly or by using {\bf magma}.
\begin{equation}
\label{M0}
\begin{aligned}
(-t_1^2+t_2^2+t_3^2) (t_1^2-t_2^2+t_3^2)+ (t_1^2-t_2^2+t_3^2) ( t_1^2+t_2^2-t_3^2)\\+( t_1^2+t_2^2-t_3^2)(-t_1^2+t_2^2+t_3^2)=\\
-(t_1+t_2+t_3)(-t_1+t_2+t_3)(t_1-t_2+t_3)(t_1+t_2-t_3),
\end{aligned}
\end{equation}

\begin{equation}
\label{M1}
\begin{aligned}
(-t_1^2+t_2^2+t_3^2) (t_1^2-t_2^2+t_3^2)+ (t_1^2-t_2^2+t_3^2) ( t_1^2+t_2^2-t_3^2)\\+( t_1^2+t_2^2-t_3^2)(-t_1^2+t_2^2+t_3^2)+
4t_1^2t_2t_3 +4t_1t_2^2 t_3+ 4t_1t_2t_3^2\\=
t_1^4+t_2^4+t_3^4-2t_1^2t_2^2-2t_2^2t_3^2-2t_1^2t_3^2-4t_1^2t_2t_3-4t_1t_2^2t_3
-4t_1t_2t_3^2\\=
(t_1+t_2+t_3)\left(t_1^3 +t_2^3+t_3^3-t_1^2t_2-t_1t_2^2-t_2^2t_3-t_2t_3^2-t_1^2t_3- t_1t_3^2-2t_1t_2t_3\right).
\end{aligned}
\end{equation}

\begin{thm}
\label{Ea1a2a3}
 Let $a_1, a_2, a_3$  be elements of $K$ such that  $ \pm a_1, \pm a_2, \pm a_3 $
are six distinct elements of $K$ and none of three elements
$$\beta_1=-a_1^2+a_2^2+a_3^2,  \beta_2=a_1^2-a_2^2+a_3^2, \beta_3=a_1^2+a_2^2-a_3^2$$
vanishes. Then the following conditions hold.

\begin{itemize}
\item[(i)] None of $a_i$ vanishes and
$\beta_1^2, \beta_2^2, \beta_3^2$ are three distinct elements of $K$.

\item[(ii)]
Let us consider an elliptic curve
$$E_{5;a_1,a_2,a_3}: y^2=\left(x+\frac{\beta_1^2}{4}\right)\left(x+\frac{\beta_2^2}{4}\right)\left(x+\frac{\beta_3^2}{4}\right)$$
with  $P=(0, -\beta_1 \beta_2 \beta_3/8) \in E_{5;a_1,a_2,a_3}(K)$.

Then $P$ enjoys the following properties.
\begin{enumerate}
\item
 $P \in 2 E_{5;a_1,a_2,a_3}(K)$.
 \item
 Assume that
 \begin{equation}
 \label{Aord5}
 \begin{aligned}
 a_1^3 +a_2^3+a_3^3-a_1^2a_2-a_1a_2^2-a_2^2a_3-a_2a_3^2-a_1^2a_3- a_1a_3^2-2a_1a_2a_3=0,\\
 (a_1+a_2+a_3)(a_1-a_2-a_3)(a_1+a_2-a_3)(a_1-a_2+a_3)\neq 0.\end{aligned} \end{equation}\end{enumerate}
  Then $P$ has order $5$. In addition, $E_{5;a_1,a_2,a_3}(K)$ contains a subgroup isomorphic to $\mathbb{Z}/10\mathbb{Z}\oplus \mathbb{Z}/2\mathbb{Z}$.
    \end{itemize}
 \end{thm}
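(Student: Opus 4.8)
The plan is to establish the three assertions in order, using Theorem~\ref{th0} for the divisibility-by-$2$ statement and Proposition~\ref{order5} together with the polynomial identities \eqref{M0}--\eqref{M1} for the order-$5$ statement. For part (i), I would first note that the hypothesis that $\pm a_1,\pm a_2,\pm a_3$ are six distinct elements forces each $a_i\ne 0$ (otherwise $a_i=-a_i$) and forces $a_i^2\ne a_j^2$ for $i\ne j$. To see that $\beta_1^2,\beta_2^2,\beta_3^2$ are distinct I would compute the differences directly. Using $\beta_1+\beta_2=2a_3^2$, $\beta_2+\beta_3=2a_1^2$, $\beta_3+\beta_1=2a_2^2$ and $\beta_1-\beta_2=2(a_2^2-a_1^2)$ together with its cyclic analogues, one gets
\[
\beta_1^2-\beta_2^2=4a_3^2(a_2^2-a_1^2),\quad \beta_2^2-\beta_3^2=4a_1^2(a_3^2-a_2^2),\quad \beta_3^2-\beta_1^2=4a_2^2(a_1^2-a_3^2),
\]
each of which is nonzero because no $a_i$ vanishes and the $a_i^2$ are pairwise distinct.

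For part (ii)(1), the curve is of the form \eqref{E2} with $\alpha_i=-\beta_i^2/4$, so for $P=(0,-\beta_1\beta_2\beta_3/8)$ one has $x_0-\alpha_i=\beta_i^2/4=(\beta_i/2)^2$, a square in $K$ for every $i$ (and a one-line check confirms $P$ lies on the curve). Theorem~\ref{th0} then yields $P\in 2E_{5;a_1,a_2,a_3}(K)$ at once, and the natural square roots to record are $r_i=\beta_i/2$, which satisfy $r_1r_2r_3=\beta_1\beta_2\beta_3/8=-y(P)$.

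For part (ii)(2), I would feed these data into the order-$5$ criterion of Proposition~\ref{order5}. With $r_i=\beta_i/2$, the relations $r_i+r_j=(\beta_i+\beta_j)/2$ give $(r_1+r_2)(r_1+r_3)=a_2^2a_3^2$ and its cyclic analogues, so I may take the second-level square roots to be $r_1^{(1)}=a_2a_3$, $r_2^{(1)}=a_1a_3$, $r_3^{(1)}=a_1a_2$. A short computation then gives $r_1r_2+r_2r_3+r_3r_1=\tfrac14(\beta_1\beta_2+\beta_2\beta_3+\beta_3\beta_1)$ and $r_1^{(1)}r_2^{(1)}+r_2^{(1)}r_3^{(1)}+r_3^{(1)}r_1^{(1)}=a_1a_2a_3(a_1+a_2+a_3)$. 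Now identity \eqref{M1} (with $t_i=a_i$, after rewriting its cubic-degree terms as $4a_1a_2a_3(a_1+a_2+a_3)$) shows that the sum of these two quantities equals $\tfrac14(a_1+a_2+a_3)$ times the cubic in the first line of \eqref{Aord5}, hence vanishes; and identity \eqref{M0} shows that $r_1r_2+r_2r_3+r_3r_1$ equals $-\tfrac14$ times the quartic product in the second line of \eqref{Aord5}, hence is nonzero. Thus both conditions \eqref{ORDER5} hold and Proposition~\ref{order5} gives that $P$ has order $5$.

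Finally, since $E_{5;a_1,a_2,a_3}$ is written in the form \eqref{E2}, the points $W_1,W_2,W_3$ together with $\infty$ form a $K$-rational subgroup isomorphic to $(\mathbb{Z}/2\mathbb{Z})^2$; combining it with $\langle P\rangle\cong\mathbb{Z}/5\mathbb{Z}$ and using $\gcd(5,2)=1$ produces a subgroup $\langle P\rangle\oplus\langle W_1,W_2\rangle\cong\mathbb{Z}/5\mathbb{Z}\oplus(\mathbb{Z}/2\mathbb{Z})^2\cong\mathbb{Z}/10\mathbb{Z}\oplus\mathbb{Z}/2\mathbb{Z}$, as required. I expect the only delicate point to be the bookkeeping of signs when invoking Proposition~\ref{order5}: its statement asks merely for the \emph{existence} of square roots satisfying \eqref{ORDER5}, while the product normalizations $r_1r_2r_3=-y(P)$ and $r_1^{(1)}r_2^{(1)}r_3^{(1)}=-y(Q)$ used inside its proof are obtained by global sign flips that leave the symmetric sums in \eqref{ORDER5} invariant, so exhibiting the single explicit choice above is enough. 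The real computational core is just the verification of the two identities \eqref{M0} and \eqref{M1}, which are taken as given.
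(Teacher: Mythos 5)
Your proposal is correct and follows essentially the same route as the paper: part (i) via the factorizations $\beta_i\pm\beta_j$, part (ii)(1) via Theorem \ref{th0} with $r_i=\beta_i/2$, and part (ii)(2) by feeding $r_i=\beta_i/2$, $r_i^{(1)}=a_ja_k$ into Proposition \ref{order5} and invoking the identities \eqref{M0} and \eqref{M1}, then adjoining the $2$-torsion. The only blemishes are cosmetic: the quantity $r_1r_2+r_2r_3+r_3r_1$ equals $+\tfrac14$ (not $-\tfrac14$) times the quartic product in \eqref{Aord5}, and the terms $4a_1^2a_2a_3+\cdots$ are quartic rather than cubic; neither affects the argument, and your remark that global sign flips leave the symmetric sums in \eqref{ORDER5} invariant is exactly the point the paper uses implicitly.
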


\begin{proof}
(i)  Since $a_i \ne - a_i$, none of $a_i$ vanishes.  Let $i,j \in\{1,2,3\}$ be two distinct indices and $k \in\{1,2,3\}$ be the third one. Then
$$\beta_i-\beta_j=a_j^2-a_i^2 \ne 0, \ \beta_i+\beta_j=2 a_k^2 \ne 0.$$
This implies that $\beta_i^2 \ne \beta_j^2$.

(ii)
Keeping our notation, we obtain that
$$r_1=\pm \frac{\beta_1}{2}=\pm\frac{-a_1^2+a_2^2+a_3^2}{2}, r_2=\pm \frac{\beta_2}{2}=\frac{a_1^2-a_2^2+a_3^2}{2},
r_3=\pm \frac{\beta_3}{2}=\pm \frac{a_1^2+a_2^2-a_3^2}{2},$$
$$r_i^{(1)}=\pm\sqrt{(r_i+r_j)(r_i+r_k)}$$
where $i,j,k$ is any permutation of $1,2,3$.
Thanks to Proposition \ref{order5}, it suffices to check that
  one may choose the  square roots
 $r_i$ and
$r_i^{(1)}$ in such a way that $r_1 r_2+r_2 r_3+r_3 r_1\ne 0$ and
\begin{equation}
\label{ast}
(r_1 r_2+r_2 r_3+r_3 r_1)+(r_1^{(1)} r_2^{(1)}+r_2 ^{(1)}r_3^{(1)}+r_3^{(1)} r_1^{(1)})=0.
\end{equation}
Let us put
$$r_i=\frac{\beta_i}{2}=\frac{-a_i^2+a_j^2+a_k^2}{2}.$$  We have
$$r_1+r_2=a_3^2,\quad r_1+r_3=a_2^2,\quad r_2+r_3=a_1^2. $$
It follows that
$$(r_1^{(1)})^2=a_2^2a_3^2,\quad (r_2^{(1)})^2=a_1^2a_3^2,\quad (r_3^{(1)})^2=a_1^2a_1^2.$$
Let us put
 $$r_1^{(1)}=a_2a_3, \quad r_2^{(1)}=a_1a_3,\quad r_3^{(1)}=a_1a_2.$$
Then the condition (\ref{ast}) may be rewritten as follows.
$$\begin{aligned}&(-a_1^2+a_2^2+a_3^2) (a_1^2-a_2^2+a_3^2)+ (a_1^2-a_2^2+a_3^2) ( a_1^2+a_2^2-a_3^2)\\+&( a_1^2+a_2^2-a_3^2)(-a_1^2+a_2^2+a_3^2)+
4a_1^2a_2a_3 +4a_1a_2^2 a_3+ 4a_1a_2a_3^2=0.
\end{aligned}$$
In light of (\ref{M1}),
 the condition (\ref{ast}) may be rewritten as
$$(a_1+a_2+a_3)(a_1^3 +a_2^3+a_3^3-a_1^2a_2-a_1a_2^2-a_2^2a_3-a_2a_3^2-a_1^2a_3- a_1a_3^2-2a_1a_2a_3)=0.$$
The latter equality follows readily from  the assumption (\ref{Aord5}) of Theorem.
By Proposition  \ref{order5}, it suffices to check that
$r_1r_2+r_2r_3+r_3r_1\ne 0$.
 In other words,  we need to prove that

\begin{equation}
\label{r1r2r3}
\begin{aligned}(-a_1^2+a_2^2+a_3^2)(a_1^2-a_2^2+a_3^2)+(a_1^2-a_2^2+a_3^2) ( a_1^2+a_2^2-a_3^2)\\+
 ( a_1^2+a_2^2-a_3^2)(-a_1^2+a_2^2+a_3^2)\ne 0.\end{aligned} \end{equation}
In light of (\ref{M0}), this inequality is equivalent to
$$ (a_1+a_2+a_3)(a_1-a_2-a_3)(a_1+a_2-a_3)(a_1-a_2+a_3) \ne0.$$
But the latter inequality holds, by the assumption (\ref{Aord5}) of the theorem. Hence, $P$ has order $5$. Clearly, $P$ and all points of order $2$ generate a subgroup that is isomorphic to $\mathbb{Z}/10\mathbb{Z}\oplus \mathbb{Z}/2\mathbb{Z}$.
\end{proof}

\begin{thm}
\label{twist5}
Let $E$ be an elliptic curve over $K$. Then the following conditions are equivalent.
\begin{itemize}
\item[(i)]
$E(K)$ contains a subgroup isomorphic to $\mathbb{Z}/10\mathbb{Z}\oplus \mathbb{Z}/2\mathbb{Z}$.
\item[(ii)]
There exists a triple $\{a_1,a_2,a_3\}\subset K$ that satisfies all the conditions of Theorem \ref{Ea1a2a3},
including (\ref{Aord5}), and
such that $E$ is isomorphic to  $E_{5;a_1,a_2,a_3}$.
\end{itemize}
\end{thm}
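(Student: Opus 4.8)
The plan is to prove the two implications separately; $(ii)\Rightarrow(i)$ is essentially free, and $(i)\Rightarrow(ii)$ carries all the content. For $(ii)\Rightarrow(i)$, Theorem \ref{Ea1a2a3}(ii)(2) already asserts that under all the stated hypotheses on $\{a_1,a_2,a_3\}$ (including \eqref{Aord5}) the group $E_{5;a_1,a_2,a_3}(K)$ contains a subgroup isomorphic to $\mathbb{Z}/10\mathbb{Z}\oplus\mathbb{Z}/2\mathbb{Z}$; since $E\cong E_{5;a_1,a_2,a_3}$ and the isomorphism respects the group law, the same holds for $E(K)$. For $(i)\Rightarrow(ii)$, I first note that a subgroup isomorphic to $\mathbb{Z}/10\mathbb{Z}\oplus\mathbb{Z}/2\mathbb{Z}$ contains the full $2$-torsion, so $E$ may be written in the form \eqref{E2}, and it contains a point $P$ of order $5$. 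Translating $x$, I may assume $x(P)=0$, so $P=(0,y_0)$ with $y_0\ne0$ and $\alpha_i\ne0$ for all $i$. The key structural remark is that the halving chain $P=2\cdot(3P)$ and $3P=2\cdot(-P)$ lives entirely in $E(K)$, since $2\cdot 3P=6P=P$ and $2\cdot(-P)=3P$.

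Consequently $P$ is divisible by $2$, so by Theorem \ref{th0} the differences $-\alpha_i$ are squares and the roots $r_i=\sqrt{-\alpha_i}$ lie in $K$; choosing their signs so that $r_1r_2r_3=-y_0$ and so that the corresponding half is $Q:=3P$, formula \eqref{x1prod} gives $(r_i+r_j)(r_i+r_k)=x(Q)-\alpha_i$. Since $Q=3P$ is again divisible by $2$ in $E(K)$ (by $-P$), Theorem \ref{th0} forces each of these to be a square $(r_i^{(1)})^2$ with $r_i^{(1)}\in K$. Now put $s_1=r_2+r_3$, $s_2=r_1+r_3$, $s_3=r_1+r_2$; these are nonzero because the $r_i^2=-\alpha_i$ are distinct, whence $r_i\ne\pm r_j$. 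The previous sentence shows that all three products $s_1s_2,\ s_2s_3,\ s_3s_1$ are squares in $K$, which forces $s_1,s_2,s_3$ to lie in one and the same square class of $K^{\times}$. Setting $\kappa=r_2+r_3$, each quotient $s_i/\kappa$ is therefore a nonzero square, and I may define $a_i\in K$ by $a_1^2=s_1/\kappa,\ a_2^2=s_2/\kappa,\ a_3^2=s_3/\kappa$. Applying the scaling isomorphism of Section \ref{torsion} with this $\kappa$ replaces $r_i$ by $r_i/\kappa$ and arranges exactly $r_1/\kappa+r_2/\kappa=a_3^2$ and its cyclic analogues, so that $\beta_i/2=r_i/\kappa$ and $E(\kappa)=E_{5;a_1,a_2,a_3}$, with $P$ carried to the distinguished point $(0,-\beta_1\beta_2\beta_3/8)$. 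The distinctness of the $s_i$ (from $r_i\ne r_j$) and their non-vanishing give that $\pm a_1,\pm a_2,\pm a_3$ are six distinct elements and that no $\beta_i=2r_i/\kappa$ vanishes, which are the remaining hypotheses of Theorem \ref{Ea1a2a3}.

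It remains to show that the order-$5$ condition forces \eqref{Aord5}. Applying the $1/4$-formula \eqref{1/4} to the quarter-point $-P$ of $P$, and using $x(-P)=x(P)=0$, yields the equality in \eqref{ORDER5}, while $r_1r_2+r_2r_3+r_3r_1=x(3P)\ne0$ (as $3P\ne\pm P$) gives its strict inequality. In the normalized coordinates $r_i^{(1)}=a_ja_k$ is forced by $(r_i^{(1)})^2=a_j^2a_k^2$, so the identity \eqref{M1} rewrites the equality of \eqref{ORDER5} as $(a_1+a_2+a_3)\,C=0$, where $C$ is the cubic expression appearing in \eqref{Aord5}, and the identity \eqref{M0} rewrites $r_1r_2+r_2r_3+r_3r_1\ne0$ as $(a_1+a_2+a_3)(a_1-a_2-a_3)(a_1+a_2-a_3)(a_1-a_2+a_3)\ne0$. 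The latter is precisely the non-degeneracy line of \eqref{Aord5} and in particular gives $a_1+a_2+a_3\ne0$; combined with $(a_1+a_2+a_3)\,C=0$ this yields $C=0$, so \eqref{Aord5} holds in full. This exhibits $\{a_1,a_2,a_3\}$ as a triple meeting all requirements of Theorem \ref{Ea1a2a3} with $E\cong E_{5;a_1,a_2,a_3}$, completing $(i)\Rightarrow(ii)$.

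I expect the genuinely delicate step to be the square-class argument of the second paragraph, namely deducing from the divisibility of $3P$ that the three sums $s_i=r_i+r_j$ all lie in a single square class, together with the verification that the one scaling factor $\kappa=r_2+r_3$ simultaneously turns all of them into squares. Everything else is bookkeeping with the explicit halving formulas of Section \ref{l1} and the two polynomial identities \eqref{M0} and \eqref{M1}.
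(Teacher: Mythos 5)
Your proof is correct and follows essentially the same route as the paper's: (ii)$\Rightarrow$(i) is quoted from Theorem \ref{Ea1a2a3}, and for (i)$\Rightarrow$(ii) both arguments normalize $x(P)=0$, use the halving chain $P=2(3P)$, $3P=2(-P)$ to place the sums $s_k=r_i+r_j$ in a single square class, rescale by one of them, and translate the order-$5$ condition into \eqref{Aord5} via the identities \eqref{M0} and \eqref{M1}. Your square-class step (the pairwise products $s_is_j=x(3P)-\alpha_k$ are squares in $K$ because $3P\in 2E(K)$) is exactly the paper's observation that the ratios $b_i/b_j$ of the auxiliary roots $b_i=\sqrt{s_i}$ lie in $K^{*}$, just phrased without leaving $K$; your normalization $a_1=1$ versus the paper's $a_3=1$ is immaterial by symmetry. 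The one imprecision is the claim that $r_i^{(1)}=a_ja_k$ is \emph{forced} by $(r_i^{(1)})^2=a_j^2a_k^2$: each $r_i^{(1)}$ is determined only up to sign, so a priori $r_i^{(1)}=\epsilon_i a_ja_k$ with $\epsilon_i=\pm1$, and the wrong signs would spoil the application of \eqref{M1}. What rescues the step is the product constraint $r_1^{(1)}r_2^{(1)}r_3^{(1)}=(r_1'+r_2')(r_2'+r_3')(r_3'+r_1')=(a_1a_2a_3)^2$ coming from the quarter-point formula, which gives $\epsilon_1\epsilon_2\epsilon_3=1$; such a sign pattern can always be absorbed by flipping the signs of some of the $a_i$, and these flips change neither $E_{5;a_1,a_2,a_3}$ (which depends only on the $a_i^2$) nor the quartic inequality in \eqref{Aord5}. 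This is precisely the role of the paper's careful choice of signs for $b_1,b_2,b_3$; with that one sentence added, your argument is complete.
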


\begin{proof}
(i) follows from (ii), thanks to Theorem \ref{Ea1a2a3}.

Suppose (i) holds. In order to prove (ii) it suffices  to check that $E$  is isomorphic to  a
certain $E_{5;a_1,a_2,a_3}$   over $K$. We may assume that $E$ is defined by an equation of the form \eqref{E2}. Suppose that
$P=(0,y(P))\in E(K)$ has order $5$. Then
$P=4(-P)$ is divisible by $4$ in $E(K)$. This implies the existence of square roots
$r_i=\sqrt{-\alpha_i}\in K$ and  $r_i^{(1)}=\sqrt{(r_i+r_j)(r_i+r_k)}\in K$ in such a way that
$$
x(-P)=x(P)+(r_1 r_2+r_2 r_3+r_3 r_1)+(r_1^{(1)} r_2^{(1)}+r_2 ^{(1)}r_3^{(1)}+r_3^{(1)} r_1^{(1)}),$$
$$r_1^{(1)} r_2^{(1) }r_3^{(1)}=(r_1+r_2)(r_2+r_3)(r_3+r_1).$$
Since $x(-P)=x(P)=0$,
\begin{equation}
\label{R5}
(r_1 r_2+r_2 r_3+r_3 r_1)+(r_1^{(1)} r_2^{(1)}+r_2 ^{(1)}r_3^{(1)}+r_3^{(1)} r_1^{(1)})=0.
\end{equation}
Since the order of $P$ is {\sl not} $3$,
\begin{equation}
\label{NOT3}
r_1 r_2+r_2 r_3+r_3 r_1\ne 0.
\end{equation}
Recall that none of $r_i+r_j$ vanishes.  Let us choose square roots
$$b_1=\sqrt{r_2+r_3},  b_2=\sqrt{r_1+r_3}, b_3=\sqrt{r_1+r_2}$$
in such a way that
$r_1^{(1)} = b_2 b_3, r_2^{(1)} = b_3 b_1$.
Since
$$r_1^{(1)} r_2^{(1) }r_3^{(1)}=  b_1^2b_2^2b_3^2=(b_1b_2 b_3)^2,$$
we conclude that
$$r_3^{(1)} =\frac{r_1^{(1)} r_2^{(1) }r_3^{(1)}}{r_2^{(1)} r_3^{(1) }}=\frac{(b_1b_2 b_3)^2}{(b_2b_3)(b_3b_1)}=b_1b_2.$$ We obtain that
\begin{equation}
\label{b1b2b3}
r_1^{(1)} = b_2 b_3, r_2^{(1)} = b_3 b_1, r_3^{(1)}=b_1b_2.
\end{equation}
Unfortunately, $b_i$ do not have to lie in $K$. However,
 all the ratios $b_i/b_j$ lie in $K^{*}$.
We have
$$r_2+r_3=b_1^2, r_1+r_3=b_2^2, r_1+r_2=b_3^2$$
and therefore
\begin{equation}
\label{r5F}
\begin{aligned}
&r_1=\frac{-b_1^2+b_2^2+b_3^2}{2},  \ r_2=\frac{b_1^2-b_2^2+b_3^2}{2}, \ r_3=\frac{b_1^2+b_2^2-b_3^2}{2},\\
&\alpha_1=-r_1^2=\frac{(-b_1^2+b_2^2+b_3^2)^2}{4},  \ \alpha_2=-r_2^2=-\frac{(b_1^2-b_2^2+b_3^2)^2}{4},\\
&\alpha_3=-r_3^2=-\frac{(b_1^2+b_2^2-b_3^2)^2}{4}, \\
&P=(0, -(r_1+r_2)(r_2+r_3)(r_3+r_1))=(0, -b_1^2 b_2^2 b_3^2)\in E(K).
\end{aligned}
\end{equation}
Since none of $r_i$ vanishes, we get
$$-b_1^2+b_2^2+b_3^2\ne 0,  \ b_1^2-b_2^2+b_3^2\ne 0, \ b_1^2+b_2^2-b_3^2\ne 0.$$
Let us put
$$\gamma_1=-b_1^2+b_2^2+b_3^2,  \gamma_2=b_1^2-b_2^2+b_3^2, \gamma_3=b_1^2+b_2^2-b_3^2.$$
It follows from Theorem  \ref{Ea1a2a3}(i)  that all $\beta_i$ are {\sl distinct} nonzero elements of $K$.
The inequality (\ref{NOT3}) combined with first formula of (\ref{r5F}) gives us
\begin{equation*}
\begin{aligned}(-b_1^2+b_2^2+b_3^2)(b_1^2-b_2^2+b_3^2)+(b_1^2-b_2^2+b_3^2) ( b_1^2+b_2^2-b_3^2)\\+
 ( b_1^2+b_2^2-b_3^2)(-b_1^2+b_2^2+b_3^2)\ne 0,\end{aligned} \end{equation*}
 which is equivalent (thanks to (\ref{M0}))  to
 $$ (b_1+b_2+b_3)(b_1-b_2-b_3)(b_1+b_2-b_3)(b_1-b_2+b_3)\ne 0.$$
 In particular,
 $$b_1+b_2+b_3 \ne 0.$$
 The equality (\ref{R5}) gives us (thanks to (\ref{M1}))
 $$(b_1+b_2+b_3)(b_1^3 +b_2^3+b_3^3-b_1^2b_2-b_1b_2^2-a_2^2b_3-b_2b_3^2-b_1^2b_3- b_1b_3^2-2b_1b_2b_3)=0,$$
 i.e.,
 $$(b_1^3 +b_2^3+b_3^3-b_1^2b_2-b_1b_2^2-a_2^2b_3-b_2b_3^2-b_1^2b_3- b_1b_3^2-2b_1b_2b_3)=0.$$
 Let us put
 $$a_1=\frac{b_1}{b_3}, \ a_2=\frac{b_2}{b_3}, \ a_3=\frac{b_3}{b_3}=1.$$
 All $a_i$ lie in $K$.
 Clearly, the triple $\{a_1,a_2,a_3\}$ satisfies all the conditions of  Theorem \ref{Ea1a2a3}
including (\ref{Aord5}).
Let us put
\begin{equation*}
\begin{aligned}
\beta_1=-a_1^2+a_2^2+a_3^2=\frac{\gamma_1}{b_3^2}=\frac{\gamma_1}{r_1+r_2},\\
   \beta_2=a_1^2-a_2^2+a_3^2=\frac{\gamma_2}{b_3^2}=\frac{\gamma_2}{r_1+r_2},  \\
   \beta_3=a_1^2+a_2^2-a_3^2=\frac{\gamma_3}{b_3^2}=\frac{\gamma_3}{r_1+r_2}.
   \end{aligned}
   \end{equation*}


 The equation of $E$  is
$$y^2=\left(x+\frac{\gamma_1^2}{4}\right)\left(x+\frac{\gamma_2^2}{4}\right)\left(x+\frac{\gamma_3^2}{4}\right).$$
Then $E$ is isomorphic to

\begin{equation*}
\begin{aligned}
E(r_1+r_2): {y^{\prime}}^2=\left(x^{\prime}+\frac{\gamma_1^2}{4(r_1+r_2)^2}\right)
\left(x^{\prime}+\frac{\gamma_2^2}{4(r_1+r_2)^2}\right)
\left(x^{\prime}+\frac{\gamma_3^2}{4(r_1+r_2)^2}\right)=\\
\left(x^{\prime}+\frac{\beta_1^2}{4}\right)\left(x^{\prime}+\frac{\gamma_2^2}{4}\right)\left(x^{\prime}+\frac{\gamma_3^2}{4}\right).
\end{aligned}
\end{equation*}
Clearly, $E(r_1+r_2)$ coincides with $E_{5;a_1,a_2,a_3}$.
\end{proof}

\begin{rem}
\label{RLM5}
Let $E_{5;a_1,a_2,a_3}$ be as in Theorem \ref{Ea1a2a3}. Clearly,
 $E_{5;a_1,a_2,a_3}(a_3)= E_{5;a_1/a_3,a_2/a_3,1}$. Let us put
 $\lambda=a_1/a_3, \mu=a_2/a_3$.  Then
 \begin{equation}
 \label{ellLM}
 \begin{aligned}
  E_{5;a_1/a_3,a_2/a_3,1}= E_{5;\lambda,\mu,1}:\\
  y^2=\left[x+\left(\frac{-\lambda^2+\mu^2+1}{2}\right)^2\right]
  \left[x+\left(\frac{\lambda^2-\mu^2+1}{2}\right)^2\right]\left[x+\left(\frac{\lambda^2+\mu^2-1}{2}\right)^2\right].
  \end{aligned}
 \end{equation}
 The equation of (isomorphic) $E_{5;\lambda,\mu,1}\left(\frac{\lambda^2+\mu^2-1}{2}\right)$ is as follows.
 \begin{equation}
 E_{5;\lambda,\mu,1}\left(\frac{\lambda^2+\mu^2-1}{2}\right):
  y^2=\left[x+ \left(\frac{1-\lambda^2+\mu^2}{\lambda^2+\mu^2-1}\right)^2 \right]
\left[x+ \left(\frac{ \lambda^2-\mu^2+1}{\lambda^2+\mu^2-1}\right)^2 \right](x+1).
 \end{equation}

 The conditions on $a_1,a_2,a_3$ may be rewritten in terms of $\lambda,\mu$ as follows.
\begin{equation}
 \label{LMU}
 \begin{aligned}
 \lambda^3+\mu^3-\lambda^2\mu-\lambda\mu^2-\lambda^2-2\lambda\mu-\mu^2-\lambda-\mu+1=0,\\
\lambda\pm\mu\neq \pm1,\ \lambda\neq0, \ \mu\neq0,\ \lambda\neq\pm\mu,\\
\lambda^2+\mu^2\neq1,\ \lambda^2-\mu^2\neq\pm1.
\end{aligned}\end{equation}
The equality (\ref{LMU}) is equivalent to
\begin{equation}
 \label{LMU1}
(\lambda+\mu)(\lambda-\mu)^2-(\lambda+\mu)^2-(\lambda+\mu)+1=0.
\end{equation}
Multiplying (\ref{LMU1}) by (non-vanishing) $(\lambda+\mu)$, we get the equivalent equation
\begin{equation}
 \label{LMU2}
(\lambda^2-\mu^2)^2-(\lambda+\mu)^3-(\lambda+\mu)^2+(\lambda+\mu)=0.
\end{equation}
Let us make the change of variables
$$\xi=\lambda+\mu, \eta=\lambda^2-\mu^2.$$
Then (\ref{LMU2}) may be rewritten as
\begin{equation}
 \label{curve5}
\eta^2=\xi(\xi^2+\xi-1),
\end{equation}
which is an (affine model of an) elliptic curve if $\fchar(K)\ne 5$ and a singular rational plane cubic (Cartesian leaf) if
$\fchar(K)=5$.
Since
\begin{equation}
\label{lmsquare}
\lambda^2+\mu^2=\frac{(\lambda+\mu)^2+(\lambda-\mu)^2}{2}=\frac{\xi^2+\frac{\eta^2}{\xi^2}}{2}=
\frac{\xi^2+\frac{\xi^2+\xi-1}{\xi}}{2}=\frac{\xi^3+\xi^2+\xi-1}{2\xi},
\end{equation}
 the only restrictions on $(\xi,\eta)$ besides the equality  (\ref{curve5})
are the inequalities
$$\xi(\xi^2+\xi-1)\ne 0,\pm 1;  \ \xi^3+\xi^2+\xi-1 \ne 2\xi,  \ \pm 1 \ne \frac{\eta}{\xi}=\sqrt{\frac{\xi(\xi^2+\xi-1)}{\xi^2}},$$
i.e.
\begin{equation}
\label{zapret5}
\xi \ne 0,\pm 1, \frac{-1\pm \sqrt{5}}{2}.
\end{equation}
This means that
\begin{equation}
\label{frorbidKSI}
(\xi,\eta) \not\in \{(0,0), (\pm1, \pm1), (\frac{-1\pm \sqrt{5}}{2},0)\}.
\end{equation}
In light of (\ref{lmsquare}), the equation (\ref{ellLM}) may be rewritten with coefficients being rational functions in $\xi,\eta$ (rather than $(\lambda,\mu)$) as follows.
\begin{equation*}
\mathcal{E}_{5,\xi,\eta}:y^2=\left[x+ \left(\frac{2(1-\eta)}{\xi^3+\xi^2+\xi-3}\right)^2 \right]
\left[x+ \left(\frac{2( \eta+1)}{\xi^3+\xi^2+\xi-3}\right)^2 \right](x+1).
\end{equation*}
\end{rem}

\begin{thm}
\label{family5}
Let $E$   be an elliptic curve over $K$. Then the following conditions are equivalent.
\begin{itemize}
\item[(i)]
$E(K)$ contains a subgroup
isomorphic to $\mathbb{Z}/10\mathbb{Z}\oplus \mathbb{Z}/2\mathbb{Z}$.
\item[(ii)]
There exist $(\xi, \eta)\in K^2$ that satisfy the equation (\ref{curve5}) and inequalities (\ref{frorbidKSI}) and such that
$E$ is isomorphic to   $\mathcal{E}_{5,\xi,\eta}$.
\end{itemize}
\end{thm}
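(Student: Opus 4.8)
The plan is to read Theorem~\ref{family5} as a mere reparametrization of Theorem~\ref{twist5}: the defining triple $\{a_1,a_2,a_3\}$ there is replaced by a point $(\xi,\eta)$ of the cubic (\ref{curve5}), and essentially all of the needed algebra has already been carried out in Remark~\ref{RLM5}. So the proof will consist of transporting conditions across the change of variables $\xi=\lambda+\mu$, $\eta=\lambda^2-\mu^2$ (with $\lambda=a_1/a_3$, $\mu=a_2/a_3$) and checking that every scaling isomorphism in sight is defined over $K$. I would organize it as the two implications of the stated equivalence.

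For (ii)$\Rightarrow$(i) I would start from $(\xi,\eta)\in K^2$ satisfying (\ref{curve5}) and the exclusion (\ref{frorbidKSI}). Since (\ref{frorbidKSI}) forces $\xi\ne 0$, the assignment $(\lambda,\mu)\mapsto(\lambda+\mu,\lambda^2-\mu^2)$ is invertible over $K$, so I recover
$$\lambda=\frac{1}{2}\left(\xi+\frac{\eta}{\xi}\right),\qquad \mu=\frac{1}{2}\left(\xi-\frac{\eta}{\xi}\right)\in K.$$
Setting $a_1=\lambda,\ a_2=\mu,\ a_3=1$, I would verify that the hypotheses of Theorem~\ref{Ea1a2a3} — distinctness of $\pm a_1,\pm a_2,\pm a_3$, nonvanishing of $\beta_1,\beta_2,\beta_3$, and the order-$5$ condition (\ref{Aord5}) — are exactly the requirements (\ref{LMU}), and that (\ref{LMU}) is equivalent to (\ref{curve5}) together with (\ref{frorbidKSI}); this is precisely the chain (\ref{LMU})$\Leftrightarrow$(\ref{LMU1})$\Leftrightarrow$(\ref{LMU2})$\Leftrightarrow$(\ref{curve5}) plus the restrictions (\ref{zapret5}) worked out in Remark~\ref{RLM5}. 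As $\lambda^2+\mu^2\ne 1$, the scaling factor $\tfrac{\lambda^2+\mu^2-1}{2}$ is a nonzero element of $K$, so the curve $\mathcal{E}_{5,\xi,\eta}=E_{5;\lambda,\mu,1}\!\left(\tfrac{\lambda^2+\mu^2-1}{2}\right)$ of Remark~\ref{RLM5} is $K$-isomorphic to $E_{5;a_1,a_2,a_3}$, and Theorem~\ref{Ea1a2a3}(ii) then supplies the subgroup isomorphic to $\mathbb{Z}/10\mathbb{Z}\oplus\mathbb{Z}/2\mathbb{Z}$.

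Conversely, for (i)$\Rightarrow$(ii) I would invoke Theorem~\ref{twist5} to produce a triple $\{a_1,a_2,a_3\}$ meeting all hypotheses of Theorem~\ref{Ea1a2a3}, including (\ref{Aord5}), with $E$ $K$-isomorphic to $E_{5;a_1,a_2,a_3}$. By Theorem~\ref{Ea1a2a3}(i) no $a_i$ vanishes, so I may rescale as in Remark~\ref{RLM5} to pass to $E_{5;\lambda,\mu,1}$ with $\lambda=a_1/a_3$, $\mu=a_2/a_3$; under this the hypotheses on the $a_i$ become exactly the conditions (\ref{LMU}) on $(\lambda,\mu)$. Putting $\xi=\lambda+\mu$ and $\eta=\lambda^2-\mu^2$ and running the same equivalences of Remark~\ref{RLM5} shows that $(\xi,\eta)$ lies on (\ref{curve5}) and avoids the forbidden locus (\ref{frorbidKSI}), while the further scaling by $\tfrac{\lambda^2+\mu^2-1}{2}$ identifies $E_{5;\lambda,\mu,1}$ with $\mathcal{E}_{5,\xi,\eta}$; hence $E\cong\mathcal{E}_{5,\xi,\eta}$, as required.

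The genuinely delicate part is not any single isomorphism but the bookkeeping: one must confirm that the long list of nondegeneracy requirements (\ref{LMU}) collapses, under $\xi=\lambda+\mu$, $\eta=\lambda^2-\mu^2$, to the single clean exclusion (\ref{frorbidKSI}) on the cubic (\ref{curve5}) — with no condition lost or spuriously introduced — and that this holds uniformly in the characteristic (in particular when $\fchar(K)=5$, where (\ref{curve5}) degenerates to a nodal rational cubic but the statement is unaffected, since it concerns only $K$-points of the affine equation). Because Remark~\ref{RLM5} already performs this translation explicitly, the remaining work is to assemble the pieces and to keep careful track of the nonvanishing of the two scaling factors, $a_3$ and $\tfrac{\lambda^2+\mu^2-1}{2}$, which is what guarantees that all the curve isomorphisms are defined over $K$.
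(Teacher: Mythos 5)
Your proposal is correct and follows essentially the same route as the paper, whose entire proof of Theorem~\ref{family5} is the single sentence that it ``follows from Theorem \ref{twist5} combined with Remark \ref{RLM5}.'' You have simply made explicit the bookkeeping (inverting $\xi=\lambda+\mu$, $\eta=\lambda^2-\mu^2$ when $\xi\ne 0$, matching \eqref{LMU} with \eqref{curve5} and \eqref{frorbidKSI}, and checking the nonvanishing of the scaling factors) that the paper delegates to Remark~\ref{RLM5}.
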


\begin{proof}
The result follows from Theorem \ref{twist5} combined with Remark \ref{RLM5}.
\end{proof}

\begin{rem}
In Theorem \ref{family5} we do {\sl not} assume that $\fchar(K)\ne{\it 5}!$
\end{rem}

\begin{cor}
Let $E$ be an elliptic curve over $\mathbb{F}_q$ with $q =13,17,19,23,25,27 $.  Then $E(\mathbb{F}_q)$ is isomorphic to $\mathbb{Z}/10\mathbb{Z}\oplus\mathbb{Z}/2\mathbb{Z}$ if and only if $E$ is isomorphic to one of $\mathcal{E}_{5,\xi,\eta}$.
\end{cor}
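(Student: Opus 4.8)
The plan is to follow exactly the pattern of the preceding corollaries in Sections \ref{l4}, \ref{l8}, and \ref{l6}: I would use Theorem \ref{family5} to translate the group-theoretic hypothesis into the existence of the appropriate versal parameters, and then use the Hasse bound \eqref{HasseB} to pin down the exact order $|E(\mathbb{F}_q)|$.

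First I would dispose of the forward implication, which is immediate. If $E(\mathbb{F}_q)$ is isomorphic to $\mathbb{Z}/10\mathbb{Z}\oplus\mathbb{Z}/2\mathbb{Z}$, then in particular $E(\mathbb{F}_q)$ contains a subgroup isomorphic to $\mathbb{Z}/10\mathbb{Z}\oplus\mathbb{Z}/2\mathbb{Z}$, so condition (i) of Theorem \ref{family5} holds and hence $E$ is isomorphic to one of the curves $\mathcal{E}_{5,\xi,\eta}$. For the converse, I would start from the assumption that $E$ is isomorphic to some $\mathcal{E}_{5,\xi,\eta}$; by Theorem \ref{family5} the group $E(\mathbb{F}_q)$ then contains a subgroup isomorphic to $\mathbb{Z}/10\mathbb{Z}\oplus\mathbb{Z}/2\mathbb{Z}$, which has order $20$, so that $20$ divides $|E(\mathbb{F}_q)|$. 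It then remains only to prove that $|E(\mathbb{F}_q)|=20$, because once this is known the order-$20$ subgroup must coincide with the whole group $E(\mathbb{F}_q)$, yielding the desired isomorphism.

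The key step is the application of the Hasse bound \eqref{HasseB}: for each of the six values $q=13,17,19,23,25,27$, the inequalities $q+1-2\sqrt q\le |E(\mathbb{F}_q)|\le q+1+2\sqrt q$ confine $|E(\mathbb{F}_q)|$ to an interval whose only multiple of $20$ is $20$ itself. The tightest cases are the two largest fields: for $q=25$ one gets $|E(\mathbb{F}_{25})|\le 25+1+2\sqrt{25}=36<40$, and for $q=27$ one gets $|E(\mathbb{F}_{27})|\le 27+1+2\sqrt{27}<39<40$, so in both cases $40$ is excluded; the smaller upper bounds for $q=13,17,19,23$ exclude $40$ a fortiori. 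Since $|E(\mathbb{F}_q)|\ge 20$ and $40$ is unattainable, we conclude $|E(\mathbb{F}_q)|=20$ for every $q$ in the list, which closes the argument.

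I do not expect a genuine obstacle here, since the reduction to a counting problem is entirely formal once Theorem \ref{family5} is in hand. The only point requiring care is the uniform verification of the Hasse bound across all six fields simultaneously, in particular confirming that the upper Hasse bound stays strictly below $40$ for the largest fields $q=25$ and $q=27$, so that $20$ is the unique admissible value of $|E(\mathbb{F}_q)|$ divisible by $20$.
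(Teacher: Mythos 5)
Your proposal is correct and follows essentially the same route as the paper: both directions come from Theorem \ref{family5}, and the converse is finished by noting that $20$ divides $|E(\mathbb{F}_q)|$ while the Hasse bound gives $|E(\mathbb{F}_q)|\le 27+2\sqrt{27}+1<40$, forcing $|E(\mathbb{F}_q)|=20$. Your extra check of the case $q=25$ separately is harmless but unnecessary, since the single worst case $q=27$ already covers all six fields.
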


\begin{proof}
Suppose that $E(\mathbb{F}_q)$ is isomorphic to $\mathbb{Z}/10\mathbb{Z}\oplus \mathbb{Z}/2\mathbb{Z}$.
By Theorem \ref{family5}, $E$ is isomorphic to
one of  elliptic curves
$\mathcal{E}_{5,\xi,\eta}$.

Conversely, suppose that $E$ is isomorphic to one of these curves. We need to prove that
$E(\mathbb{F}_q)$ is isomorphic to $\mathbb{Z}/10\mathbb{Z}\oplus \mathbb{Z}/2\mathbb{Z}$.  By Theorem \ref{family5}, $E(\mathbb{F}_q)$ contains a subgroup isomorphic to $\mathbb{Z}/10\mathbb{Z}\oplus \mathbb{Z}/2\mathbb{Z}$; in particular, $20$ divides $|E(\mathbb{F}_q)|$. In order to finish the proof, it suffices to check that $|E(\mathbb{F}_q)|<40$, but this inequality follows from the Hasse bound \eqref{HasseB}
$$|E(\mathbb{F}_q)|\le q+2\sqrt{q}+1\le 27+2\sqrt{27}+1<40.$$
\end{proof}

\begin{cor}
 Let $E$ be an elliptic curve over $\mathbb{F}_q$ with $q=31,37,41,43$. Then $E(\mathbb{F}_q)$ is isomorphic to $\mathbb{Z}/20\mathbb{Z}\oplus\mathbb{Z}/2\mathbb{Z}$ if and only if $E$ is isomorphic to one of $\mathcal{E}_{5,\xi,\eta}$.
\end{cor}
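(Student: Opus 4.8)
The plan is to follow verbatim the template of the preceding corollaries, reducing the statement to Theorem~\ref{family5} and the Hasse bound~\eqref{HasseB}. For the ``only if'' direction I would note that $\mathbb{Z}/20\mathbb{Z}\oplus\mathbb{Z}/2\mathbb{Z}$ contains a subgroup isomorphic to $\mathbb{Z}/10\mathbb{Z}\oplus\mathbb{Z}/2\mathbb{Z}$, namely $2(\mathbb{Z}/20\mathbb{Z})\oplus\mathbb{Z}/2\mathbb{Z}$. Hence, if $E(\mathbb{F}_q)\cong\mathbb{Z}/20\mathbb{Z}\oplus\mathbb{Z}/2\mathbb{Z}$, then $E(\mathbb{F}_q)$ contains a subgroup isomorphic to $\mathbb{Z}/10\mathbb{Z}\oplus\mathbb{Z}/2\mathbb{Z}$, and Theorem~\ref{family5} gives at once that $E$ is isomorphic to one of the curves $\mathcal{E}_{5,\xi,\eta}$.

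For the converse, suppose $E$ is isomorphic to some $\mathcal{E}_{5,\xi,\eta}$. By Theorem~\ref{family5}, $E(\mathbb{F}_q)$ contains a subgroup isomorphic to $\mathbb{Z}/10\mathbb{Z}\oplus\mathbb{Z}/2\mathbb{Z}$, so $20$ divides $|E(\mathbb{F}_q)|$. I would then pin down the order via Hasse: since $q+1-2\sqrt{q}=(\sqrt{q}-1)^2$ is increasing in $q$, all four values give $|E(\mathbb{F}_q)|\ge(\sqrt{31}-1)^2>20$, and likewise $|E(\mathbb{F}_q)|\le(\sqrt{43}+1)^2<60$; thus $20<|E(\mathbb{F}_q)|<60$, and divisibility by $20$ forces $|E(\mathbb{F}_q)|=40$.

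Finally I would read off the group structure from $40=2^3\cdot5$ together with the presence of $\mathbb{Z}/10\mathbb{Z}\oplus\mathbb{Z}/2\mathbb{Z}$, exactly as in the $\mathbb{F}_{23}$ corollary. Since $E(\mathbb{F}_q)$ is a product of at most two cyclic groups and already contains the full $2$-torsion $\mathbb{Z}/2\mathbb{Z}\oplus\mathbb{Z}/2\mathbb{Z}$, its $2$-primary component $E(\mathbb{F}_q)(2)$ has order $8$ and $2$-rank $2$, hence is isomorphic to $\mathbb{Z}/4\mathbb{Z}\oplus\mathbb{Z}/2\mathbb{Z}$; its $5$-primary component has order $5$, hence is $\mathbb{Z}/5\mathbb{Z}$. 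Therefore
$$E(\mathbb{F}_q)\cong(\mathbb{Z}/4\mathbb{Z}\oplus\mathbb{Z}/2\mathbb{Z})\oplus\mathbb{Z}/5\mathbb{Z}\cong\mathbb{Z}/20\mathbb{Z}\oplus\mathbb{Z}/2\mathbb{Z}.$$
I expect no genuine obstacle here: once Theorem~\ref{family5} is in hand the argument is routine. The only places demanding a little care are the uniform Hasse estimate (checking that $40$ is the sole multiple of $20$ in the interval for every $q\in\{31,37,41,43\}$) and the elementary fact that an abelian group of order $8$ that is a product of at most two cyclic groups and has full $2$-torsion must be $\mathbb{Z}/4\mathbb{Z}\oplus\mathbb{Z}/2\mathbb{Z}$.
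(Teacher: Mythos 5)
Your proposal is correct and follows essentially the same route as the paper: reduce both directions to Theorem~\ref{family5}, use the Hasse bound to force $|E(\mathbb{F}_q)|=40$ as the unique multiple of $20$ in the interval, and then identify the $2$-primary component as $\mathbb{Z}/4\mathbb{Z}\oplus\mathbb{Z}/2\mathbb{Z}$ from its order $8$ and the presence of full $2$-torsion. The numerical checks $(\sqrt{31}-1)^2>20$ and $(\sqrt{43}+1)^2<60$ match the paper's estimates exactly.
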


\begin{proof}
Suppose that $E(\mathbb{F}_q)$ is isomorphic to
$\mathbb{Z}/20\mathbb{Z}\oplus \mathbb{Z}/2\mathbb{Z};$
 the latter contains a subgroup isomorphic to  $\mathbb{Z}/10\mathbb{Z}\oplus \mathbb{Z}/2\mathbb{Z}$.
By Theorem \ref{family5}, $E$ is isomorphic to
one of  elliptic curves
$\mathcal{E}_{5,\xi,\eta}$.

Conversely, suppose that $E$ is isomorphic to one of these curves. We need to prove that
$E(\mathbb{F}_q)$ is isomorphic to $\mathbb{Z}/20\mathbb{Z}\oplus \mathbb{Z}/2\mathbb{Z}$.  By Theorem \ref{family5}, $E(\mathbb{F}_q)$ contains a subgroup isomorphic to $\mathbb{Z}/10\mathbb{Z}\oplus \mathbb{Z}/2\mathbb{Z}$; in particular, $20$ divides $|E(\mathbb{F}_q)|$. It follows from the Hasse bound \eqref{HasseB} that
$$20<31-2\sqrt{31}+1 \le |E(\mathbb{F}_q)|\le 43+2\sqrt{43}+1<60.$$
This implies that $|E(\mathbb{F}_q)|=40$, and therefore $E(\mathbb{F}_q)$ is isomorphic to a direct sum of $\mathbb{Z}/5\mathbb{Z}$ and the order 8 abelian group $E(\mathbb{F}_q)(2)$; in addition, the latter group is isomorphic to a direct sum of two cyclic groups of even order (because it contains a subgroup isomorphic to $\mathbb{Z}/2\mathbb{Z}\oplus \mathbb{Z}/2\mathbb{Z}$). This implies that  $E(\mathbb{F}_q)(2)$ is isomorphic to $\mathbb{Z}/4\mathbb{Z}\oplus \mathbb{Z}/2\mathbb{Z}$. It follows that  $E(\mathbb{F}_q)$ is isomorphic to a direct sum
$$\mathbb{Z}/5\mathbb{Z}\oplus \mathbb{Z}/4\mathbb{Z}\oplus \mathbb{Z}/2\mathbb{Z}\cong
\mathbb{Z}/20\mathbb{Z}\oplus\mathbb{Z}/2\mathbb{Z}.$$
\end{proof}

\begin{cor}
 Let $E$ be an elliptic curve over $\mathbb{F}_q$ with $q =59 $ or $61$. Then $E(\mathbb{F}_q)$ is isomorphic to $\mathbb{Z}/30\mathbb{Z}\oplus\mathbb{Z}/2\mathbb{Z}$ if and only if $E$ is isomorphic to one of $\mathcal{E}_{5,\xi,\eta}$.
\end{cor}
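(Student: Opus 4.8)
The plan is to reuse the two-part template of the preceding corollaries: Theorem \ref{family5} supplies the parametrization, while the Hasse bound \eqref{HasseB} pins down the exact order of $E(\mathbb{F}_q)$, after which a primary-decomposition argument identifies the group.

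For the ``only if'' direction I would simply observe that $\mathbb{Z}/30\mathbb{Z}\oplus\mathbb{Z}/2\mathbb{Z}$ contains a subgroup isomorphic to $\mathbb{Z}/10\mathbb{Z}\oplus\mathbb{Z}/2\mathbb{Z}$, so Theorem \ref{family5} applies verbatim and exhibits $E$ as some $\mathcal{E}_{5,\xi,\eta}$. This direction is immediate.

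For the ``if'' direction, assuming $E\cong \mathcal{E}_{5,\xi,\eta}$, Theorem \ref{family5} gives that $E(\mathbb{F}_q)$ contains $\mathbb{Z}/10\mathbb{Z}\oplus\mathbb{Z}/2\mathbb{Z}$, so $20$ divides $|E(\mathbb{F}_q)|$. I would then invoke \eqref{HasseB}: for $q=59$ it reads
$$44 < 60 - 2\sqrt{59} \le |E(\mathbb{F}_{59})| \le 60 + 2\sqrt{59} < 76,$$
and for $q=61$ it reads
$$46 < 62 - 2\sqrt{61} \le |E(\mathbb{F}_{61})| \le 62 + 2\sqrt{61} < 78.$$
In each interval the unique multiple of $20$ is $60$, forcing $|E(\mathbb{F}_q)| = 60$.

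Finally I would read off the structure from the primary decomposition of the abelian group of order $60 = 2^2\cdot 3\cdot 5$. The $3$- and $5$-primary parts have prime order, hence are $\mathbb{Z}/3\mathbb{Z}$ and $\mathbb{Z}/5\mathbb{Z}$. The $2$-primary component has order $4$ but already contains the full $2$-torsion $\mathbb{Z}/2\mathbb{Z}\oplus\mathbb{Z}/2\mathbb{Z}$ (all three points $W_i$ are rational, since $E$ has the form \eqref{E2}), so it cannot be cyclic and must equal $\mathbb{Z}/2\mathbb{Z}\oplus\mathbb{Z}/2\mathbb{Z}$. Assembling the parts gives
$$E(\mathbb{F}_q)\cong \mathbb{Z}/2\mathbb{Z}\oplus\mathbb{Z}/2\mathbb{Z}\oplus\mathbb{Z}/3\mathbb{Z}\oplus\mathbb{Z}/5\mathbb{Z}\cong \mathbb{Z}/30\mathbb{Z}\oplus\mathbb{Z}/2\mathbb{Z},$$
as desired. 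I do not anticipate any genuine obstacle here; the only point requiring care is confirming that $60$ is the sole multiple of $20$ inside the Hasse window for both primes, which the displayed estimates establish.
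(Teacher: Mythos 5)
Your proposal is correct and follows essentially the same route as the paper: Theorem \ref{family5} for both directions, the Hasse bound to force $|E(\mathbb{F}_q)|=60$, and then an identification of the group of order $60$ using the known $\mathbb{Z}/10\mathbb{Z}\oplus\mathbb{Z}/2\mathbb{Z}$ subgroup (the paper adjoins the $3$-part directly, you phrase it via primary decomposition, but the content is the same). All your numerical estimates check out.
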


\begin{proof}
Suppose that $E(\mathbb{F}_q)$ is isomorphic to
$\mathbb{Z}/30\mathbb{Z}\oplus \mathbb{Z}/2\mathbb{Z}$; the latter contains a subgroup isomorphic to  $\mathbb{Z}/10\mathbb{Z}\oplus \mathbb{Z}/2\mathbb{Z}$.
By Theorem \ref{family5}, $E$ is isomorphic to
one of  elliptic curves
$\mathcal{E}_{5,\xi,\eta}$.

Conversely, suppose that $E$ is isomorphic to one of these curves. We need to prove that
$E(\mathbb{F}_q)$ is isomorphic to $\mathbb{Z}/30\mathbb{Z}\oplus \mathbb{Z}/2\mathbb{Z}$.  By Theorem \ref{family5}, $E(\mathbb{F}_q)$ contains a subgroup isomorphic to $\mathbb{Z}/10\mathbb{Z}\oplus \mathbb{Z}/2\mathbb{Z}$; in particular, $20$ divides $|E(\mathbb{F}_q)|$. It follows from the Hasse bound \eqref{HasseB} that
$$40<59-2\sqrt{59}+1\le |E(\mathbb{F}_q)|< 61+2\sqrt{61}+1<80.$$
This implies that $|E(\mathbb{F}_q)|=60$; in particular, $E(\mathbb{F}_q)$ contains a subgroup isomorphic to $\mathbb{Z}/3\mathbb{Z}$. This implies that $E(\mathbb{F}_q)$ contains a subgroup isomorphic to
$$(\mathbb{Z}/10\mathbb{Z}\oplus \mathbb{Z}/2\mathbb{Z})\oplus \mathbb{Z}/3\mathbb{Z}\cong \mathbb{Z}/30\mathbb{Z}\oplus \mathbb{Z}/2\mathbb{Z};$$
the order of this subgroup is 60, i.e.,  it coincides with the order of the whole group $E(\mathbb{F}_q)$.
\end{proof}

\begin{thm}
\label{familyQu5}
Let $K$ be a quadratic field and $E$   be an elliptic curve over $K$. Then the following conditions are equivalent.
\begin{itemize}
\item[(i)]
The torsion subgroup $E(K)_t$ of
$E(K)$ is
isomorphic to $\mathbb{Z}/10\mathbb{Z}\oplus \mathbb{Z}/2\mathbb{Z}$.
\item[(ii)]
There exist $(\xi, \eta)\in K^2$ that satisfy the equation (\ref{curve5}) and inequalities (\ref{frorbidKSI}) and such that
$E$ is isomorphic to   $\mathcal{E}_{5,\xi,\eta}$.
\end{itemize}
\end{thm}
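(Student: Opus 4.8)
The plan is to deduce this statement from the ``contains-a-subgroup'' version already established in Theorem \ref{family5}, upgrading it to an ``equals-the-torsion-subgroup'' statement by invoking the classification of torsion over quadratic fields recorded in Theorem \ref{Kquad}. This is exactly the pattern used to pass from Theorems \ref{family2}, \ref{family8}, \ref{family3} to their arithmetic counterparts \ref{Qi}, \ref{Q8}, \ref{Q6}.

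First I would treat the implication (ii) $\Rightarrow$ (i). Assuming (ii), Theorem \ref{family5} tells us immediately that $E(K)$ contains a subgroup isomorphic to $\mathbb{Z}/10\mathbb{Z}\oplus\mathbb{Z}/2\mathbb{Z}$. To apply Theorem \ref{Kquad} I must first check its hypothesis that all three points of order $2$ are defined over $K$; but $\mathcal{E}_{5,\xi,\eta}$ is presented in the form \eqref{E2} with all three roots lying in $K$, and $E$ is isomorphic to it, so this hypothesis holds. Since $10=2\cdot 5$, we are in the case $m=5$ of Theorem \ref{Kquad}, and part (1) of that theorem then forces $E(K)_t$ to be isomorphic to $\mathbb{Z}/10\mathbb{Z}\oplus\mathbb{Z}/2\mathbb{Z}$, which is (i).

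The reverse implication (i) $\Rightarrow$ (ii) is even shorter: if $E(K)_t\cong\mathbb{Z}/10\mathbb{Z}\oplus\mathbb{Z}/2\mathbb{Z}$, then in particular $E(K)$ contains a subgroup isomorphic to $\mathbb{Z}/10\mathbb{Z}\oplus\mathbb{Z}/2\mathbb{Z}$, and Theorem \ref{family5} directly produces $(\xi,\eta)\in K^2$ satisfying \eqref{curve5} and \eqref{frorbidKSI} with $E$ isomorphic to $\mathcal{E}_{5,\xi,\eta}$.

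There is essentially no obstacle here, since all the genuine work---constructing the family, parametrizing the order-$5$ condition by the curve \eqref{curve5}, and establishing the two-way correspondence---is already contained in Theorems \ref{Ea1a2a3}, \ref{twist5}, and \ref{family5}. The only point that requires a moment's care is verifying the full rational $2$-torsion hypothesis needed to invoke Theorem \ref{Kquad}, and that is transparent from the explicit Weierstrass model of $\mathcal{E}_{5,\xi,\eta}$. The genuinely deep input is entirely external, namely the Kamienny--Kenku--Momose determination of torsion over quadratic fields that underlies Theorem \ref{Kquad}.
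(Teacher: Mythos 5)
Your proposal is correct and follows exactly the paper's own route: Theorem \ref{Kquad} (the Kamienny--Kenku--Momose classification, case $m=5$) upgrades the ``contains a subgroup'' conclusion of Theorem \ref{family5} to the full torsion statement, and the converse is immediate from Theorem \ref{family5}. Your explicit verification of the rational $2$-torsion hypothesis for $\mathcal{E}_{5,\xi,\eta}$ is a detail the paper's two-line proof leaves implicit, but the argument is the same.
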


\begin{proof}
By Theorem \ref{Kquad},
if $E(K)$ contains a subgroup  isomorphic to  $\mathbb{Z}/10\mathbb{Z}\oplus \mathbb{Z}/2\mathbb{Z}$ then
$E(K)_t$  is isomorphic to  $\mathbb{Z}/10\mathbb{Z}\oplus \mathbb{Z}/2\mathbb{Z}$. Now the desired result follows from Theorem \ref{family5}.
\end{proof}

\end{document}